\documentclass[final]{siamltex}


\usepackage{amsfonts}
\usepackage{amsmath}
\usepackage{graphicx}
\usepackage{subfigure}
\usepackage[mathscr]{euscript}

\newcommand{\real}{\mathbb{R}}


\newcommand{\bfE}{\mathbb{E}}
\newcommand{\bP}{\mathbb{P}}

\newcommand{\sF}{\mathcal{F}}

\newcommand{\posint}{\mathbb{Z}_+}

\newtheorem{assume}{Assumption}
\newtheorem{remark}{Remark}[section]

\title{Efficiency of the Girsanov transformation approach for parametric sensitivity analysis of stochastic chemical kinetics}



\author{Ting Wang\thanks{Department of Mathematical Sciences, University of Delaware,
        DE 19716 ({\tt tingw@udel.edu}).}
        \and Muruhan Rathinam\thanks{Department of Mathematics and Statistics, University of Maryland, Baltimore County,
        MD 21250 ({\tt muruhan@umbc.edu}).}}

\begin{document}
\maketitle
\newcommand{\slugmaster}{%
\slugger{juq}{xxxx}{xx}{x}{x--x}}

\begin{abstract}
Most common Monte Carlo methods for sensitivity analysis of 
stochastic reaction networks are the finite difference (FD), the Girsanov
transformation (GT) and the regularized pathwise derivative (RPD) methods. 
It has been numerically observed in the literature, that the
biased FD and RPD methods tend to have lower variance than the 
unbiased GT method and that centering the GT method 
(CGT) reduces its variance. We provide a theoretical justification for these 
observations in terms of system size asymptotic analysis under what is known
as the classical scaling. 
Our analysis applies to GT, CGT and FD, and shows that the standard 
deviations of their estimators when normalized by the actual sensitivity, 
scale as $\mathcal{O}(N^{1/2}), \mathcal{O}(1)$ and
$\mathcal{O}(N^{-1/2})$ respectively, as system
size $N \to \infty$. In the case of the FD methods, the $N \to \infty$ asymptotics 
are obtained keeping the finite difference perturbation $h$ fixed. 
Our numerical examples verify that our order estimates are sharp
and that the variance of the RPD method scales similarly to the FD methods. 
We combine our large $N$ asymptotics with previously known small $h$
asymptotics to obtain the best choice of $h$ in terms of $N$, 
and estimate the number $N_s$ of simulations required to achieve 
a prescribed relative $\mathcal{L}_2$ error $\delta$. This shows that $N_s$ depends on 
$\delta$ and $N$ as
$\delta^{-2 - \frac{\gamma_2}{\gamma_1}} N^{-1}, \delta^{-2}$ and $N \delta^{-2}$,
for FD, CGT and GT respectively. Here $\gamma_1 >0, \gamma_2>0$
depend on the type of FD method used.  
\end{abstract}

\begin{keywords}
stochastic chemical kinetics, Girsanov transformation, asymptotic analysis,
parametric sensitivity, finite difference, variance analysis. 
\end{keywords}

\begin{AMS}
Primary: 60H35, 65C99; Secondary: 92C42, 92C45
\end{AMS}

\pagestyle{myheadings}
\thispagestyle{plain}
\markboth{Ting Wang AND Muruhan Rathinam}{Efficiency of the Girsanov transformation approach}

\section{Introduction}
Estimation of parametric sensitivities of dynamical systems is an
essential part of the modeling and parameter estimation process. 
For instance the problem of finding the set of parameters that best fit 
some observed data can be formulated as an optimization problem over 
the parameter space where the partial derivatives of the objective function
depend on the parametric sensitivities defined as partial derivatives of some 
system output with respect to the parameters.  

In deterministic dynamical systems governed by ordinary differential equations (ODEs), the sensitivities 
defined by the partial derivatives  $\partial f(X(t)) /\partial c_k$, of some function $f$ of the state with respect to the parameters 
are essentially computed by numerical integration of an auxiliary system of 
evolution equations obtained by linearization of the original ODEs. 
In contrast, for stochastic dynamical systems, several vastly different approaches
exist. We note that we shall treat the
parameters $c_k$ as deterministic and not as random quantities, while the dynamic
behavior of the systems we consider is stochastic. 

Our primary focus will be stochastically modeled chemical reaction systems. While the stochastic 
chemical kinetic model under the well stirred assumption \cite{Gillespie77} has been around for decades,
it wasn't until the late nineties that the importance of stochastic chemical models in some applications was realized
\cite{arkin1998stochastic, mcadams1999sa}. Especially, intracellular chemical reactions systems, often contain certain molecular species in small copy numbers, 
and as such, the deterministic model based on ordinary differential equations (ODEs) or partial differential 
equations (PDEs) for the concentrations of various molecular species is not appropriate. 
A more appropriate model, under the well stirred assumption, consists of a continuous time Markov process
$X(t)$ with the nonnegative integer lattice $\mathbb{Z}_+^n$ as state space.    

While we focus on stochastic chemical kinetics which we describe in the next subsection, we note
that analogous models appear in other fields such as epidemiology and 
predator-prey systems.
  
\subsection{Stochastic chemical kinetics}
As a simple example, let us consider the chemical reaction system 
\begin{equation}\label{eq:S1S2S3}
S_1 + S_2 \rightarrow S_3, \;\; S_3 \rightarrow S_1 + S_2,
\end{equation}
consisting of three species $S_1, S_2$ and $S_3$ undergoing two reaction channels. The state space 
is the set $\mathbb{Z}_+^3$ of nonnegative three dimensional integer vectors, where the state $x=(x_1,x_2, x_3)$ 
describes the copy numbers $x_1$ of $S_1$, $x_2$ of $S_2$ and $x_3$ of $S_3$.
When the first reaction channel fires, the state changes by $\nu_1=(-1,-1,1)^T$, and when the second reaction channel fires it changes by $\nu_2=(1,1,-1)^T$. The quantities $\nu_j$ are known as {\em stoichiometric vectors}
and for chemical reaction systems the $\nu_j$ are parameters and state independent. The ``probabilistic rate'' at which these two reactions occur is given by
the {\em intensity functions} $a_1(x,c)$ and $a_2(x,c)$ (where $c$ is a vector of parameters). The precise meaning of the intensity functions is as follows. 
If $X(t)=(X_1(t),X_2(t), X_3(t))$ is the stochastic process of species counts, then 
given $X(t)=x$, the probability of at least one firing of the $j$th
reaction channel during interval $(t,t+h]$ is $a_j(x,c) h + o(h)$ as $h \to
  0^+$.

{\bf Stochastic mass action form:}
Under the well stirred model of Gillespie \cite{Gillespie77}, the intensity functions take the 
{\em stochastic mass action} form:
$a_1(x,c) = c_1 x_1 x_2$ and $a_2(x,c) = c_2 x_3$. 
The rationale for this 
specific form is based on the following considerations. The probability that a given pair of one $S_1$ and one $S_2$ 
molecules come together and react during time interval $(t,t+h]$ is given by $c_1 h + o(h)$ where $c_1$ is a constant. Given that there are $x_1 x_2$ different ways to choose the pair, we obtain the probability of $c_1 x_1 x_2 h + o(h)$ 
for any pair of $S_1$ and $S_2$ to react. Likewise, the probability that a given $S_3$ molecule gives rise to an $S_1$ and an $S_2$ via the second reaction during $(t,t+h]$ is given by $c_2 h + o(h)$ where $c_2$ is a constant. Given that there are $x_3$ different $S_3$ molecules, we obtain the probability of $c_2 x_3 h + o(h)$ 
for any of the $S_3$ to react.

{\bf General chemical system:}
More generally, a chemical reaction system consists of $m$ reaction channels and $n$ chemical species $\{S_1, \cdots, S_n\}$.
The $n$-dimensional state vector $X(t)$ characterizes the state of the system
where each entry $X_i(t)$ represents the number of molecules of the species
$S_i$ at time $t$. 
The firing of a reaction channel $j \in \{1,\cdots,m\}$ at time $t$ causes the state to be incremented by the 
stoichiometric vector $\nu_j$.  We assume that $X$ is {\em c\`adl\`ag}, i.e.\ paths of $X$ are right continuous with left-hand limits and hence, if reaction channel $j$ fires at time $t$, then $X(t)=X(t-)+\nu_j$. 
For $j=1,\cdots,m$ we denote by $R_j(t)$ the number of firings of the $j$-th
reaction channel during $(0,t]$. Thus $X(t) = X(0) + \nu R(t)$ for $t \geq 0$,
  where $\nu$ is the {\em stoichiometric matrix} whose $j$th column is $\nu_j$
  and $R(t)=(R_1(t),\cdots,R_m(t))^T$. We note that $R(0)=0$ and
  $R_j(t)-R_j(t-)$ is either $0$ or $1$. 
The process $X$ is assumed to be Markovian, and
associated with each reaction channel is an {\em intensity function} (also known as {\em propensity function} in the chemical kinetics literature) $a_j(x,c), j = 1, \cdots, m$, which is such that, given $X(t)=x$ the probability of
one or more firing of reaction channel $j$ during $(t,t+h]$ is $a_j(x,c) h+
  o(h)$ as $h \to 0^+$. Here, $c$ are parameters. 
Following the terminology of \cite{Bremaud}, we note 
that $R_j$ are counting processes which admit the $\sF_t$-predictable
intensity process $a_j(X(t-),c)$ where $\sF_t$ is the filtration generated by
$X$ and $R$.

{\bf Random time change representation:}
Naturally, the probability laws of the stochastic processes $X$ and $R$,
depend on the parameters $c$. For the purpose of analyses, it proves
convenient to find a way to represent the processes $X$ and $R$ corresponding
to different $c$ values on the same sample space $(\Omega,\sF,\bP)$.  
To this end, we use the {\em random time change representation} \cite{Ethier-Kurtz} to express $X$ via 
the stochastic equation
\begin{equation}\label{equ:RTC}
X(t,c) = x_0 + \sum_{j = 1}^{m} Y_j\left(\int_0^t a_j(X(s,c),c) ds\right) \nu_j
\end{equation}
where $Y_j$ are independent unit rate Poisson processes. It follows that 
\begin{equation}
R_j(t,c) = Y_j\left(\int_0^t a_j(X(s,c),c) ds\right), \quad j=1,\dots,m,
\end{equation}
where $x_0$ is the initial state assumed to be deterministic. 
We note that in this representation, 
we have a family of stochastic processes $X(t,c)$ and $R(t,c)$ on the same
sample space $(\Omega,\sF,\bP)$ where each element $\omega \in \Omega$ may be identified with a specific 
trajectory of $Y(t)=(Y_1(t),\dots,Y_m(t))$, the underlying unit rate independent Poissons. We note that the $Y_j(t)$ do not depend on the parameters. See \cite{CRP} for a detailed explanation of how to compute $X(t,c)$ once 
a sample path of $Y(t)$ is generated.  
   
\subsection{Parametric sensitivity estimation}
We consider parametric sensitivities of the stochastic process $X(t,c)$ with respect to
an output function $f:\mathbb{Z}_+^n \to \real$,  
defined by the partial derivatives 
\[ \frac{\partial}{\partial c_k} \mathbb{E} (f(X(t,c))),\]
where $c_k$ are scalar parameters, $f$ is some suitable scalar function of the
state space, $\mathbb{E}$ is the expectation and $t>0$ is some fixed final time. For simplicity we shall 
focus on one scalar parameter $c$. 
When the number of species $n$ is large (in several applications it is of the order of $10-100$), 
due to the curse of dimensionality, Monte Carlo approaches are the most viable for both simulation of 
the process $X$ as well as estimation of sensitivities. Monte Carlo simulation of 
exact sample paths of the process $X$ is feasible and is provided by the well known SSA or Gillespie algorithm
\cite{Gillespie77}. In this context several different Monte Carlo approaches exist for 
the numerical computation of the parametric sensitivities as well.

We shall use $\mathscr{S}(t, c)$ to denote the exact sensitivity
\begin{equation}
\mathscr{S}(t, c) = \frac{\partial}{\partial c} \mathbb{E}(f(X(t, c))).
\end{equation}
As we will see later in this section,
all the Monte Carlo methods for computing the sensitivity involve the 
estimation of the expected value $\bfE(S(t,c))$ of some process $S(t,c)$ at 
time $t>0$, via i.i.d.\ sample estimation, where $S(t,c)$ can be computed easily from 
the knowledge of system parameters, the function $f$ and the 
sample path of $X$ on the time interval $[0,t]$. In other words, one generates $N_s$ independent copies $X^{(i)}(t,c)$ of $X(t,c)$ for $i=1,\dots,N_s$, and 
then computes the corresponding copies $S^{(i)}(t,c)$ of $S(t,c)$.
Then the sensitivity is estimated by
\[
\bar{S}(t,c) = \frac{1}{N_s} \sum_{i=1}^{N_s} S^{(i)}(t,c).
\]
Since $\bfE(\bar{S}(t,c))=\bfE(S(t,c))$ and $\text{Var}(\bar{S}(t,c))=\text{Var}(S(t,c))/N_s$, the accuracy of this
estimate depends on the error (known as bias) $\mathbb{E}(S(t, c) - \mathscr{S}(t,c))$, the variance $\text{Var}(S(t,c))$ of the underlying estimator $S(t,c)$ and the sample size $N_s$. 

One way to quantify the error in estimation is via the mean square error:
\begin{equation}
\mathbb{E}\left(|\bar{S}(t,c) - \mathscr{S}(t, c)|^2\right) = \frac{\text{Var}(S(t,c))}{N_s} + (\bfE (S(t, c) - \mathscr{S}(t, c)))^2.
\end{equation}
If $\text{Var}(S(t,c))$ is large, then one requires greater number $N_s$ of
simulations resulting in loss of efficiency. On the other hand if a biased
estimator is used, increasing the number of simulations $N_s$ does not
help. It is often useful to consider the {\em relative error} (RE) defined by
\begin{equation}\label{eq-RE}
\text{RE} = \sqrt{\bfE\left(|\bar{S}(t,c) - \mathscr{S}(t,
  c)|^2\right)}/|\mathscr{S}(t, c)|,
\end{equation}
provided the true sensitivity $\mathscr{S}(t, c)$ is nonzero. 

Throughout this paper, we shall refer to $S(t,c)$ as the {\em underlying estimator} or simply the {\em
  estimator}  and 
$\bar{S}(t,c)$ as the {\em ultimate estimator}. As the properties of the
latter depend directly on that of the former and $N_s$, the analysis of the 
variance of the underlying estimator $S(t,c)$ shall be our focus.
We define the {\em relative standard deviation} (RSD) and
the {\em relative bias} (RB) of the underlying estimator $S(t,c)$ by
\begin{equation}\label{eq-RSD-def}
\text{RSD} = \sqrt{\text{Var}(S(t,c))}/|\mathscr{S}(t, c)|,
\end{equation}
and
\begin{equation}\label{eq-RB-def}
\text{RB} = \bfE(S(t,c) - \mathscr{S}(t,c))/|\mathscr{S}(t, c)|,
\end{equation}
when $\mathscr{S}(t, c) \neq 0$.
We note that the relative error is given by
\begin{equation}\label{eq-RE-RSD-RB}
\text{RE} = \sqrt{\frac{\text{RSD}^2}{N_s} + \text{RB}^2}.
\end{equation}

Now, we turn our attention to the description of some common Monte Carlo 
sensitivity estimators. As a general reference on this topic we suggest \cite{Glynn-book, Glasserman-book}. 
The Monte Carlo methods for sensitivity can
broadly be categorized into finite difference (FD) methods \cite{CFD,Glynn-book,CRP}, pathwise derivative
(PD) methods \cite{Glynn-book,RPD} and the likelihood ratio or the Girsanov transformation (GT)
methods \cite{Glynn-book,Gir}. 

The FD methods involve approximation of the partial derivative by
the simple finite difference $\bfE [f(X(t,c+h)) - f(X(t,c))]/h$ 
or some higher order finite difference.  
In the case of the simple FD above, 
\begin{equation}\label{eqn:FD-estimator}
S_{\text{FD}}(t,c) = h^{-1}[f(X(t,c+h)) - f(X(t,c))].
\end{equation}
Thus, $\bfE(S_{\text{FD}}(t,c)) \neq \frac{\partial}{\partial c} \mathbb{E} (f(X(t,c)))$ 
in general, and the bias is decreased by decreasing $h$.
On the other hand,
\begin{equation*}
\begin{split}
\text{Var}(S_{\text{FD}}(t,c)) = h^{-2} \left\{\text{Var}(f(X(t,c+h))) + \text{Var}(f(X(t,c))) 
-2 \text{Cov}\left(f(X(t,c+h)),f(X(t,c))\right)\right\}.
\end{split}
\end{equation*}
In general the numerator does not vanish as fast as
$h^2$ when $h \to 0$, showing that small $h$ leads to large variance. 
When $f(X(t,c+h))$ and $f(X(t,c))$ are strongly positively correlated, one
may expect the variance to be small. 
If the processes $X(t,c)$ and $X(t,c+h)$ are taken to be independent, which is 
accomplished by the use of two independent streams of random numbers in the
simulation, the resulting FD method is known as the {\em independent random
  number} (IRN) method. If the processes $X(t,c)$ and $X(t,c+h)$ are strongly
coupled, which is accomplished by the use of a common random number stream, 
the resulting approach is known as {\em common random number} (CRN) method.
In general, the CRN FD methods have much lower variance than the IRN FD
methods.  
Moreover, different approaches to couple the processes $X(t,c+h)$ and $X(t,c)$ lead to different covariances and hence different variances for the FD estimators. 
See \cite{CFD, CRP} for some approaches.  

In the PD method one takes
\[
S_{\text{PD}}(t,c) = \frac{\partial}{\partial c} f(X(t,c)),
\]
and the method is applicable provided the derivative exists, analytical computation of the
derivative is possible and the commutation
\begin{equation}\label{eq-PD}
\bfE \left(  \frac{\partial}{\partial c} f(X(t,c)) \right) =
\frac{\partial}{\partial c} \bfE (f(X(t, c)))
\end{equation}
holds. 
In the context of stochastic chemical kinetics, direct application of the PD method is not valid as the commutation in \eqref{eq-PD} does not hold.  To see this, note that $f(X(t,c,\omega))$ is piecewise constant 
in $c$ for fixed $t$ and $\omega$ and hence the derivative is $0$, while the sensitivity $\partial \bfE (f(X(t, c))) / \partial c$ is in
general non-zero, showing that the commutation in \eqref{eq-PD} is not valid (see \cite{RPD} for details).  
It is possible to regularize the problem by replacing
$\partial  f(X(t,c)) / \partial c$ with
\begin{equation}\label{eq-S-RPD}
S_{\text{RPD}}(t,c) = \frac{\partial}{\partial c} \left(\frac{1}{2 w} \int_{t-w}^{t+w} f(X(s,c)) ds\right),
\end{equation}
to obtain the {\em regularized pathwise derivative} (RPD) estimator for which the commutation of derivative with
expectation holds for a restricted class of examples \cite{RPD}. 
This, however results in a bias which increases with large $w$. 
Also see \cite{Glasserman-book} for similar work 
 in the context of computing the sensitivity of 
path integrals. 

The GT approach may be motivated in different ways. For the purpose of our analysis based on the random time change representation, it is natural to start with the family of processes $X(t,c)$ parametrized by $c$ that 
are all defined on $(\Omega,\sF,\bP)$ as mentioned before. 
Suppose the sensitivity is required at a specific parameter value $c=c_0$. Under certain regularity conditions,
a family of new probability measures $P(c)$ may be constructed on the same sample space $(\Omega,\sF)$ for a range of $c$ values in a neighborhood of $c_0$ so that $P(c_0)=\bP$, i.e.\ coincides with the original 
probability measure (see \cite{Bremaud} for instance). Moreover, the probability measures $P(c)$ are absolutely continuous with respect to 
$P(c_0)$ and the $P(c)$-law of the process $X(t,c_0)$ is the same as the $P(c_0)(=\bP)$-law of the process $X(t,c)$.
In other words, for all suitable functions $f$,
\[
\int_\Omega f(X(t,c)) dP(c_0) = \int_\Omega f(X(t,c_0) dP(c).
\]
We observe that the left hand side is $\bfE(f(X(t,c)))$. If we denote by $L(t,c,c_0)$ the Radon-Nikodym derivative $dP(c)/dP(c_0)$, then we have
\begin{equation}\label{eq-GT}
\begin{aligned}
\left.\frac{\partial}{\partial c}\right|_{c=c_0} \bfE (f(X(t,c))) &= \left.\frac{\partial}{\partial c}\right|_{c=c_0}
\int_{\Omega} f(X(t,c_0)) L(t,c,c_0) dP(c_0)\\
&= \int_{\Omega} f(X(t,c_0)) \left.\frac{\partial}{\partial c}\right|_{c=c_0} L(t,c,c_0) dP(c_0) 
\end{aligned}
\end{equation}
provided the differentiation inside the integral is valid. It turns out that
\begin{equation}\label{eq:def-Z}
Z(t,c_0) = \left.\frac{\partial}{\partial c}  \right|_{c=c_0} L(t,c,c_0),
\end{equation}
is analytically tractable and the required sensitivity is given by
\[
\left.\frac{\partial}{\partial c}\right|_{c=c_0} \bfE (f(X(t,c))) = \bfE [f(X(t,c_0)) Z(t,c_0)],
\]
thus the sensitivity estimator $S(t,c_0) = f(X(t,c_0)) Z(t,c_0)$. 

In the context of stochastic chemical kinetics, the weight process $Z$ defined by \eqref{eq:def-Z} is given by \cite{Gir, RPD}
 \begin{equation}\label{eq:Z}
 \begin{split}
 Z(t,c)  = \sum_{j = 1}^m \int_0^t \frac{\frac{\partial a_j}{\partial c}(X(s-,c), c)}{a_j(X(s-,c), c)} dR_j(s,c) - \sum_{j = 1}^m \int_0^t\frac{\partial a_j}{\partial c}(X(s,c), c) ds.
\end{split} 
 \end{equation}
We have dropped $c_0$ in favor of $c$ for notational ease, however, it must be noted that all computations are carried out at the specific parameter value $c$ at which the sensitivity is required. 

We also investigate a modified GT method inspired by the work in \cite{PB}, which we call the {\em centered Girsanov transformation} (CGT)
method in which we replace the estimator $f(X(t,c))Z(t,c)$ with $(f(X(t,c))-\mathbb{E}(f(X(t,c))))Z(t,c)$. 
Since $Z(t,c)$ has zero mean this new estimator has the same mean as the
original one and hence is also unbiased. In practice $\bfE(f(X(t,c)))$ is not known
and needs to be estimated as well. One approach would be to generate $N_s$
independent copies  $X^i(t,c)$ of $X(t,c)$ and then use 
\[
\overline{f(X(t,c))} = \frac{1}{N_s} \sum_{i=1}^{N_s} f(X^{(i)}(t,c)),
\]
to estimate $\bfE(f(X(t,c)))$ and then use 
\[
\bar{S}_{\text{CGT}} = \frac{1}{N_{s}} \sum_{i=1}^{N_{s}}
\left( f(X^{(i)}(t,c))-\overline{f(X(t,c))} \right) Z^{(i)}(t,c),
\]
as the ultimate estimator. In this case $\bfE(\bar{S}_{\text{CGT}}) \neq
\bfE(f(X(t,c)) Z(t,c))$ and the estimator is biased. However, when $N_s$ 
is large the bias is $\mathcal{O}(1/N_s)$. Also 
\[
\text{Var}(\bar{S}_{\text{CGT}}) = \text{Var}(S_{\text{CGT}}) /N_s +
\mathcal{O}(1/N_s^2),
\]
where $S_{\text{CGT}} = (f(X(t,c))-\bfE(f(X(t,c)))) Z(t,c)$ is the underlying
CGT estimator. 
So it is adequate to study the variance of $(f(X(t,c))-\bfE(f(X(t,c))))
Z(t,c)$.  
In the formula used in \cite{PB} for the ultimate estimator, 
$Z^{(i)}$ above were replaced by 
$Z^{(i)} - \bar{Z}$ where $\bar{Z}$ was the sample mean of $Z^{(i)}$. When the sample
size $N_s$ is large, both ultimate estimators are similar.   
For the purpose of analysis, we shall focus on the underlying CGT estimator 
\begin{equation}\label{eq-S-CGT-def}
S_{\text{CGT}} = f(X(t,c)) Z(t,c) - \bfE(f(X(t,c))) Z(t,c).
\end{equation}

We note that the variances of the GT and CGT estimators are given by the
following formulae:
\begin{equation}\label{eq:GT-CGT}
\begin{aligned}
\text{Var}(S_{\text{GT}}) &= \bfE((f(X(t,c)))^2 Z^2(t,c)) - \bfE^2(f(X(t,c))
Z(t,c)),\\
\text{Var}(S_{\text{CGT}}) &= \text{Var}(S_{\text{GT}}) - 2 \bfE(f(X(t,c))
Z^2(t,c)) + \bfE^2(f(X(t,c))) \bfE(Z^2(t,c)).\\
\end{aligned}
\end{equation}
It must be noted that it is not always the case that
$\text{Var}(S_{\text{GT}})$ is greater than or equal to
$\text{Var}(S_{\text{CGT}})$. Thus, one cannot conclude that CGT is always 
superior to GT. However, it was observed in \cite{PB} as well as in our
simulations that CGT tends to have lower variance than GT in most examples.

Recently introduced methods, \emph{auxiliary path algorithm (APA)} \cite{APA}
and \emph{Poisson path algorithm (PPA)} \cite{PPA}, do not strictly belong to 
these three categories mentioned above. While they are closely related to the 
FD and the PD methods, they provide unbiased estimators similar to the GT. 
We do not investigate these methods in this paper. 

It has been observed that the PD method, when applicable, yields an estimator 
with lower variance than the GT estimator which is applicable in most
situations \cite{ Glynn-book, RPD}. In the context of stochastic chemical kinetics, 
the regularized PD (RPD) method is only applicable to a limited class of examples and results in a 
biased estimator \cite{RPD}. The FD methods also result in biased estimators. Both the FD
and RPD methods also involve the use of method parameters,  $h$ or $w$, and 
the smaller these are the less the bias of these methods. However, decreasing
$h$ or $w$ results in an increase in the variance of the FD or RPD estimators
respectively. The GT estimator on the other hand is unbiased and does not
involve method parameters to be determined. However, it has been observed that
in many situations, the GT estimator has much larger variance compared to the 
FD and RPD estimators \cite{Glynn-book,Gir, CRP,RPD}. 
To our knowledge, no theoretical explanation has been presented for the large variance of 
the GT method observed in many applications. In this paper, we provide a theoretical 
explanation for the large variance.

\begin{remark}\label{rem-c-zero}
If a coefficient $c_j=0$ in the stochastic mass action form of intensity
functions, then reaction channel $j$ is absent. However, one may want to 
compute the sensitivity at $c_j=0$ to see the effect of ``turning off'' a 
reaction channel. In this case the GT or CGT methods does not work, in fact the 
weight process $Z$ is undefined. 
However, the FD methods 
work. Given the dependence of $Z$ on $c_j$, one also expects the variance of 
$Z$ to approach infinity as $c_j \to 0$. This was numerically examined in
\cite{APA}. 
\end{remark}

\subsection{System size dependence in stochastic mass action}\label{sec-system-size}
In stochastic chemical kinetics as well as other 
population models, there is a ``system size parameter" $N$ and in the $N \to \infty$ 
these systems behave deterministically (see Chapter 11 of \cite{Ethier-Kurtz}
for instance). Our analysis shows that the variance of the GT 
method grows much faster in $N$ than the variances of the FD methods.  

We describe the general stochastic mass action form of intensities that 
commonly arise in stochastic chemical kinetics \cite{Gillespie77} and 
describe how system size $N$ enters into the model. 
If we divide the stoichiometric vector $\nu_j$ into two parts, such that $\nu_j = \nu'_j - \nu''_j$, where
\begin{enumerate}
\item[$\nu'_j $]:  the vector number of molecules of each species that are created in the $j$th reaction,
\item[$\nu''_j$]: the vector number of molecules of each species that are consumed in the $j$th reaction,
\end{enumerate}
then the intensity of the $j$th reaction is
\begin{equation}\label{equ:mass action}
a_{j}^{N}(x, c) = \frac{c_j}{N^{|\nu''_j|-1}}\prod_{i}^{n}\binom{x_i}{\nu''_{ij}},
\end{equation}
where $|\nu''_j| = \sum_{i=1}^{n}\nu''_{ij}$ and $N$ is the volume of
the system times Avogadro's number, $c_j$ is a constant specifying the rate of
the reaction. We note that the term $\binom{x_i}{\nu''_{ij}}$ represents the number of 
ways to choose $\nu''_{ij}$ molecules from $x_i$ molecules of the $i$th
species.
The term $1/N^{|\nu''_j|-1}$ also plays a critical role. To understand this,
let us return to the example in \eqref{eq:S1S2S3}. Let us relabel the
parameters as $c_1'$ and $c_2'$. As $c_1'h + o(h)$ is the probability 
that a given pair of $S_1$ and $S_2$ interact during $(t,t+h]$, one expects 
$c_1'$ to depend on the system volume or equivalently on system size $N$ in inverse
proportion: $c_1'= c_1/N$. Here, the newly defined $c_1$ is independent of 
system size $N$.  On the other hand, for the monomolecular reaction, 
the probability $c_2' h + o(h)$ of a given $S_3$ molecule reacting during
$(t,t+h]$ is independent of system size $N$. In general, when $|\nu''_{j}|$ 
number of molecules come together to react, the term $c_j'$ will depend on 
system size $N$ as
\begin{equation}\label{eqn:cj-cj-prime}
c_j' = c_j / N^{|\nu''_{j}|-1}. 
\end{equation}
See \cite{Gillespie77} for 
more details. It must be noted that it is often useful to model ``pure production'' 
reactions, represented by an abstract chemical equation as $\emptyset \to S$, 
and the stochastic chemical models in literature often utilize such
reactions. In this case, the stochastic mass action form of intensity function
is a constant $c'$ and it is natural to take its dependence on $N$ to be
proportional: $c' = c N$, still satisfying the formula $c'_j = c_j /
N^{|\nu''_{j}|-1}$. 
  
Thus the intensity functions $a_j^N$ depend on $N$ and $x$ in a
specific manner, referred to as {\em density dependence}
(see Chapter 11 of \cite{Ethier-Kurtz}). This density dependence leads to a deterministic
 limiting behavior in the large system size ($N \to \infty$), when the 
initial conditions are also scaled by $N$ so that the initial species counts 
per volume (concentration) is held constant. The relevant theorem from
\cite{Ethier-Kurtz} will be restated in the next
section.

{\bf The parameters $c'_j$ and $c_j$:}
We note that the parameters $c'_j$ (which depend on $N$) are sometimes referred to as the {\em
  stochastic parameters} while $c_j$ are referred to as the {\em deterministic
  parameters}. In practice, one works with $c'_j$, and hence the sensitivities 
with respect to $c'_j$ will be relevant. The sensitivities 
with respect to $c_j$ are related to those with respect to $c'_j$ via 
\begin{equation}\label{eq:sens-c-cprime}
\mathscr{S}_j(t,c) = \frac{\partial}{\partial c_j}\bfE(f(X(t))) =  \frac{\partial}{\partial
  c'_j}\bfE(f(X(t))) N^{1-|\nu''_{j}|} = \mathscr{S}'_j(t,c) N^{1-|\nu''_{j}|}.
\end{equation}
Moreover, if $S$ is a sensitivity estimator for the sensitivity with respect
to the deterministic parameter $c_j$, then $S' = S N^{|\nu''_{j}|-1}$ is a 
sensitivity estimator for the sensitivity with respect
to the stochastic parameter $c'_j$. While the variances and biases of 
the stochastic and deterministic sensitivity estimators scale differently 
with system size $N$, the relative quantities RE, RSD and RB, will scale 
the same way. Therefore, without loss of generality, in the rest of the paper, we shall only concern 
ourselves with sensitivities with respect to the deterministic parameters
$c_j$.

Finally, we like to note that in the stochastic mass action form of intensity
functions, there is precisely one (deterministic) parameter $c_j$ for each 
intensity function $a_j$ and the parameters enter multiplicatively. 
Hence $\frac{\partial a_j}{\partial c_j}/{a_j}  = 1/c_j$.
This leads to the simple form for the weight process $Z(t,c)$ for the
sensitivity with respect to $c_j$
\begin{equation}\label{eq-Z-simple}
Z(t,c) = \frac{1}{c_j} \left( R_j(t,c) - \int_0^t a_j(X(s,c)) ds\right).
\end{equation} 

\subsection{An illustrative example}
To investigate the estimator variance for the GT, CGT and FD methods,
we consider the analytically tractable birth death model from population dynamics, which also appears 
in gene regulatory networks where mRNA is produced at a constant probabilistic
rate and decays at a rate proportional to the number of mRNA. The model
is described by
\begin{equation}\label{eq:birthdeath}
\emptyset \xrightarrow{c_1} S, \; \; S \xrightarrow{c_2} \emptyset.
\end{equation}
The intensity functions are $a_1^N(x,c) = Nc_1$ and $a^N_2(x,c) = c_2 x$.
We consider the output function $f(x)=x$. Denoting by $X^N$ the 
system size dependence of the process, it can be 
shown that 
\begin{equation}\label{eq-EXN-bd}
\bfE(X^N(t,c)) = N x_0 e^{-c_2 t} + \frac{Nc_1}{c_2} (1 - e^{-c_2 t}),
\end{equation}
where we have chosen a deterministic initial condition $X^N(0)=N x_0$. 
The sensitivities with respect to $c_1$ and $c_2$ are given by
\[
\begin{aligned}
\frac{\partial}{\partial c_1}\bfE(X^N(t,c)) &= \frac{N}{c_2} (1 - e^{-c_2t}),\\
\frac{\partial}{\partial c_2}\bfE(X^N(t,c)) &= - N x_0 t e^{-c_2 t} -
\frac{N c_1}{c_2^2} (1 - e^{-c_2 t}) + \frac{N c_1}{c_2} t e^{-c_2 t}.\\
\end{aligned}
\]
We observe that the both sensitivities are $\mathcal{O}(N)$ as $N \to \infty$. Also, in terms of $t$ both
sensitivities are $\mathcal{O}(1)$ as $t \to \infty$.

To study the variance of the GT and CGT estimators, first we consider the sensitivity  $\frac{\partial}{\partial c_1}
\bfE(X^N(t,c))$. The population process $X^N(t,c)$ and the weight process $Z^N(t,c)$
in this case can be written as
\begin{equation}
\begin{split}
X^N(t,c) &= N x_0 - \int_{(0, t]} \,dR^N_1(s,c) + \int_{(0,t]} \, dR^N_2(s,c),\\
Z^N(t,c) &= \int_{(0,t]}\frac{1}{c_1}dR^N_1(s,c) - N\int_{0}^{t} ds,
\end{split}
\end{equation}
where $R^N_j$ and $Z^N$ show dependence on $N$. 
One can use the Ito formula for processes driven by finite variation processes
(see \cite{Rogers}) to write down the stochastic equations for
$(X^N)^\alpha(t,c)(Z^N)^\beta(t,c)$, for the integer powers $0 \leq \alpha, \beta \leq
2$,
and then take expectations to obtain a coupled system of linear ODEs for 
$\bfE((X^N)^\alpha(t,c)(Z^N)^\beta(t,c))$. Then the variance of GT and CGT estimators
can be computed by the relations \eqref{eq:GT-CGT} with $f(x)=x$.

After lengthy calculations with the aid of Maple symbolic software one can show 
that 
\begin{equation}
\begin{split}
\text{Var}(S_{\text{GT}}) = \frac{N{{e}^{-2\,{c_2}\,t}}}{c_1c_2^2}
(&{e^{2\,c_2\,t}}N^2c_1^2t
+N{c_1}\,t{c_2}\,{e^{2\,{c_2}\,t}}
+2\,{e^{{c_2}\,t}}N^2{c_1}\,{c_2}\,t{x_0}
+{{e}^{c_2\,t}}c_2^2tN{x_0}\\
&+c_2^2t{N}^{2}x_0^2
-2\,{{e}^{c_2\,t}}N^2c_1^2t
-{{e}^{c_2\,t}}Nc_1\,c_2\,t
-2\,{N}^{2}c_1\,tc_2\,x_0\\
&-Nx_0\,tc_2^2
+3\,N{c_1}\,{{e}^{2\,c_2\,t}}
+{{e}^{2\,c_2\,t}}c_2
+2\,Nx_0\,{{e}^{c_2\,t}}c_2\\
&+N^2c_1^2t
-6\,{{e}^{c_2\,t}}Nc_1
-{{e}^{c_2\,t}}c_2
-2\,N{x_0}\,{c_2}+3\,N{c_1}) 
,
\end{split}
\end{equation}
and
\begin{equation}
\begin{split}
\text{Var}(S_{\text{CGT}}) = \frac {N{{e}^{-2\,c_2\,t}}}{ c_1c_2^2}
(& N{c_1}\,t{c_2}\,{{e}^{2\,{c_2}\,t}}+{
{e}^{c_2\,t}}c_2^2tN x_0-{{e}^{c_2\,t}}
Nc_1\,c_2\,t
-Nx_0\,tc_2^2
+N{c_1}\,{{e}^{2\,c_2\,t}}\\
&+{{e}^{2\,c_2\,t}}c_2
-2\,{{e}^{c_2\,t}}Nc_1
-{{e}^{c_2\,t}}{c_2}
+N{c_1}) 
.
\end{split}
\end{equation}
We observe that the variance of the GT estimator is $\mathcal{O}(N^3)$ while that of 
the CGT estimator is $\mathcal{O}(N^2)$, as $N \to \infty$. On the other hand, both 
estimators have $\mathcal{O}(t)$ variance as $t \to \infty$.
Hence, in the $N \to \infty$ limit, the RSD of the GT estimator is $\mathcal{O}(N^{1/2})$ and 
the RSD of the CGT estimator is $\mathcal{O}(1)$.
We can also conclude that in the $t \to \infty$ limit, the 
RSD is $\mathcal{O}(\sqrt{t})$ for both methods.

Secondly we consider the sensitivity $\frac{\partial}{\partial c_2}
\bfE(X^N(t, c))$. The weight process $Z^N(t,c)$ in this case can be written as 
\begin{equation}
Z^N(t,c) = \int_{(0,t]}\frac{1}{c_2}dR^N_2(s,c) - \int_{0}^{t} X^N(s,c) ds,
\end{equation}
and the analysis, while possible is more complicated. For simplicity, we choose
$c_1=0$, so the process now corresponds to a pure death process. 
In this case, the variances of GT and CGT estimators can be shown to be
\begin{equation}\label{equ:VAR1}
\begin{split}
\text{Var}(S_{\text{GT}}) = \frac{1}{c_2^2}&(e^{-2 c_2 t} N^3 x_0^3 - 4e^{-2c_2 t}
N^2 x_0^2+3e^{-2c_2 t} N x_0+3e^{-2 c_2 t} N^2 x_0^2 t^2 c_2^2\\
&-2e^{-3c_2 t} N x_0 + 3e^{-3c_2 t} N^2 x_0^2 + e^{-c_2 t} N^2 x_0^2 - e^{-c_2
  t} N x_0\\
&+e^{-c_2 t} N x_0 t^2 c_2^2-4e^{-2c_2t}t^2c_2^2 N x_0 - e^{-3c_2 t} N^2 x_0^3),
\end{split}
\end{equation}
 and
\begin{equation}\label{equ:VAR2}
\begin{split}
\text{Var}(S_{\text{CGT}})=\frac{1}{c_2^2}&(-2e^{-2c_2t} N^2 x_0^2
+ 3e^{-2c_2 t} N x_0 + e^{-2c_2 t} N^2 x_0^2 t^2 c_2^2 \\
&- 2e^{-3c_2t} N x_0 + e^{-3c_2t} N^2 x_0^2 + e^{-c_2t} N^2 x_0^2\\
&- e^{-c_2t} N x_0 + e^{-c_2t} N x_0 t^2 c_2^2 - 4e^{-2c_2t} N x_0 t^2 c_2^2).
\end{split}
\end{equation}
When dependence on system size $N$ is
concerned, the variance of GT estimator is $\mathcal{O}(N^3)$ while that of CGT
estimator is only $\mathcal{O}(N^2)$. 
As in the case of the parameter $c_1$, 
we again obtain that the RSD of the GT method is $\mathcal{O}(N^{1/2})$ while that of
CGT is $\mathcal{O}(1)$, as $N \to \infty$. 
Finally, we note that large $t$ behavior is uninteresting as the 
system enters the absorbing state $0$ eventually.

Now we consider any FD estimator, and we can bound its variance as 
\begin{equation}
\begin{split}
\text{Var}(S_{\text{FD}})&=
h^{-2}\text{Var}(X^N(t, c+h) - X^N(t, c)) \\
&\leq 2h^{-2}\left\{\text{Var}(X^N(t, c+h)) + \text{Var}(X^N(t, c))\right\}.
\end{split}
\end{equation}
We also note that \cite{rathinam2007reversible}
\begin{equation}
\text{Var}(X^N(t, c)) = N x_0 (1-e^{-c_2 t})e^{-c_2 t} + \frac{N c_1}{c_2} (1-e^{-c_2t}).
\end{equation}
In our analysis we shall treat the finite difference perturbation $h$ of the parameter
as independent of system size $N$ so that we consider the variance and bias of the FD estimator as a function of the two variables $h$ and $N$.
From the above equation, we see that for any fixed $h$, the variance of an FD estimator is $\mathcal{O}(N)$ and hence the RSD of the FD estimator is $\mathcal{O}(N^{-1/2})$ as $N \to \infty$.
Finally, we note that for fixed $N$, as $t\to \infty$, the variance of the FD estimator is $\mathcal{O}(1)$.

We note here that the above upper bound for $\text{Var}(S_{\text{FD}})$ is exactly twice the variance of the
independent random number (IRN) FD method. If a common random number (CRN) 
FD method is used, the variance is in general much smaller.  Nevertheless, our
numerical results show that the asymptotic order in $N$ is sharp even for CRN.  

From the expression for $\bfE(X^N(t,c))$ in \eqref{eq-EXN-bd} it can easily be
shown that the relative bias (RB) defined by \eqref{eq-RB-def}, of any FD method is $\mathcal{O}(1)$ as $N \to \infty$ (with $h$ fixed) when
sensitivity of $\bfE(X^N(t,c))$ with respect to $c_1$ or $c_2$ is
considered.

To summarize, we note that when computing the sensitivity of $\bfE(X^N(t,c))$ 
in this example, with respect to either of the parameters $c_1$ or $c_2$, we observe that the 
RSDs of the GT, CGT and FD estimators scale with system size
$N$ as $\mathcal{O}(N^{1/2}),\mathcal{O}(1)$ and $\mathcal{O}(N^{-1/2})$ respectively. If $N$ is modestly large (say $10-100$), a
significant amount of reduction in the RSD can be expected using CGT over GT. 
On the other hand FD methods will have even lower variance when compared 
to both GT and CGT as system size increases. However, the FD methods are biased, 
and for fixed $h$ the relative bias (RB) remains $\mathcal{O}(1)$ as $N \to \infty$. 

\subsection{Contributions of this paper} 
Our analysis will show that the observations made about the relative standard
deviation (RSD) and the relative bias (RB) of 
the GT, CGT and FD estimators in the context of the particular example of the
previous subsection generalize to a large class of stochastic reaction
networks.  These general results are provided in Section \ref{sec-biasvar-sens}.
While our analysis does not apply to the RPD method, our numerical simulations
show that RPD has system size dependence similar to the FD methods. 
While our RSD analysis in the cases of CGT and CRN FD estimators is 
not proven to be sharp, the numerical simulations show that the estimates 
in terms of large system size $N$ are sharp. 

Our analysis thus provides theoretical evidence that centering (to obtain the
CGT method) significantly
improves the efficiency of the GT methods. Since the FD methods are biased 
while the GT and CGT methods are not, efficiency comparison must be based on variance and
bias. In the case of the FD estimators which depend on system size $N$ as 
well as the perturbation parameter $h$, our analysis in Section
\ref{sec-biasvar-sens} treats $h$ and $N$ as 
independent variables and provides the large $N$ behavior for fixed $h$. 
The small $h$ behavior of the FD methods (for fixed $N$) is well 
known \cite{Glynn-book}.   
In Section \ref{sec-discuss}, we combine our large $N$ results with the 
existing small $h$ results for the FD methods in order to decide the optimal
choice of $h$ as a function of $N$, and  
provide an estimate of efficiency (as measured by the number $N_s$ of trajectories needed to achieve a 
given value $\delta$ for the relative error (RE)) of the GT, CGT and FD methods.

\section{General setup and running assumptions}
As mentioned in the previous section, the system size shall be the key to 
our analytical explanation for the larger variance of the GT estimator. 
In this section we set the stage for the system size analysis and state some
assumptions that shall be carried throughout the rest of the paper. 
We shall use the notation $|x|$ for the norm of a vector (any norm in
$\real^n$ would do) and $\|\nu\|$ for the corresponding induced
norm of a matrix.

\begin{remark}\label{rem-no-c}
Our analysis will focus on processes $X$, $R$ and $Z$ corresponding to
different system sizes $N$, however, the deterministic parameter value $c$ is fixed at 
a specific value at which the sensitivity is sought. For notational ease
and readability, we
shall not show the dependence of these processes and intensity functions 
on $c$, and only display $c$ when it 
explicitly appears outside these.      
\end{remark}
  
We will study the family of processes $X^N$ indexed
by $N \geq 1$ corresponding to the family of intensity functions $a_j^N$ 
that are represented on the same sample space via
the stochastic equation
\begin{equation}\label{RTC-N1}
X^N(t) = N x_0 + \sum_{j = 1}^m Y_j\left(\int_0^t a_j^N(X^N(s))\, ds\right) \nu_j, \quad N \geq 1,
\end{equation} 
where $Y_j$ are independent unit rate Poisson processes and we have 
taken $X^N(0) = N x_0$ where $x_0 \in \real_+^n$ is fixed (deterministic).
We also define the corresponding family of vector reaction count processes
$R^N(t)$ whose $j$th component $R^N_j(t)$ counts the number of reaction events
of type $j$ that occurred during $(0,t]$.  Thus 
\[
R^N_j(t) = Y_j\left(\int_0^t a_j^N(X^N(s))\, ds\right), \quad N \geq 1,\;\;
j=1,\dots,m.
\]  
We also define the centered processes $M^N(t) = (M^N_1(t),\dots,M^N_m(t))$ by
\[
M^N_j(t) = R^N_j(t) - \int_0^t a_j^N(X^N(s)) ds, \quad N \geq 1,\;\;
j=1,\dots,m.
\]

We shall state five running assumptions under which the rest of the
analysis in this paper is carried out. We note that the Assumptions 1-3 are 
assumptions on the intensity functions and their dependence on parameters 
and system size. These assumptions are satisfied by the stochastic mass 
action form of intensity functions and are intended to generalize certain 
key properties of the stochastic mass action form of intensity functions. 
Not all stochastic models of intensity functions in the literature follow
the stochastic mass action form. In such cases, our analysis will still apply 
provided these assumptions are met.

\begin{assume}\label{assume1}
We assume the following form of parameter dependence on 
the intensity function. For each $j = 1, \cdots, m$ and $N \geq 1$,
\begin{equation}
a_j^N(x, c) = c_j b_j^N(x),
\end{equation}
where $b_j^N:\real^n \to \real$ are such that $b_j^N$ restricted to
$\posint^N$ are nonnegative. This also implies that there are precisely $m$ parameters, one for each reaction $j$. 
\end{assume}

For the analysis in this paper we need not assume the stochastic mass action
form, but merely the density dependence which is stated by our Assumption \ref{assume2}. 

\begin{assume}\label{assume2}
We suppose that for each $j=1,\cdots,m$, and each $x \in \real_+^n$, 
the limit $\lim_{N \to \infty} a_j^N(Nx)/N = a_j(x)$ exists and moreover, 
for each compact $K \subset \real_+^n$, the collection of functions $a_j^N(N
x) -N a_j(x)$ is uniformly bounded for $x \in K$ and $N \geq 1$.  
We note that this implies that for each compact set $K \subset \real_+^n$ 
there exists a constant $B_K>0$ such that
\begin{equation}\label{eqn:assume2}
\left|\frac{a_j^N(N x)}{N}-a_j(x)\right| \leq \frac{B_K}{N}, \quad x \in K, \; j=1,\dots,m, \; N \geq 1.
\end{equation}
\end{assume}

Defining $X_N(t) = N^{-1} X^{N}(t)$, we note that $X_N$ can be interpreted as
the {\em concentration} of molecules at time $t$ for system size $N$.
We note that $X_N$ are coupled via the following stochastic equations.
\begin{equation}\label{RTC-N}
X_N(t) = x_0 + \sum_{j = 1}^m N^{-1} Y_j\left(\int_0^t a_j^N(NX_N(s))\, ds\right) \nu_j.  
\end{equation}

We state the following theorem regarding the limiting behavior of $X_N$ (see
\cite{Ethier-Kurtz} for details). The deterministic limit $X$ of $X_N$ is also
referred to as the {\em fluid limit}.

\begin{theorem}\label{thm:fluid}(Theorem $2.1$ of Chapter $11$ in \cite{Ethier-Kurtz})
Suppose that Assumption \ref{assume2} holds. Moreover,  assume that for each compact $K \subset \mathbb{R}^n$,
\[\sum_{j=1}^{m} |\nu_j|\sup_{x\in K} a_j(x) < \infty,\]
and that $F(x) = \sum_{j = 1}^{m} \nu_j a_j(x)$ is Lipschitz on $K$, that is, for each $x, y\in K$, there exists some constant $M_K$ such that
\[|F(x) - F(y)| \leq M_K|x - y|.\]
Suppose $t >0$ is in the forward maximal interval of existence of solution $X$
for the ODE initial value problem  
\[
X(t) = x_0 + \int_0^t F(X(s)) ds.
\]
Then
\[\lim_{N}\sup_{s \leq t} \left|X_N(s) - X(s)\right| = 0~~\text{a.s.}, \]
where the deterministic limit $X$ satisfies the ODE above.
\end{theorem}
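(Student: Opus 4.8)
The plan is to carry out the classical pathwise compensation-and-Gronwall argument for density-dependent Markov chains directly on the random time change representation \eqref{RTC-N}. First I would write each unit-rate Poisson process as $Y_j(u) = \tilde Y_j(u) + u$ with $\tilde Y_j(u) := Y_j(u) - u$, so that
\begin{equation*}
X_N(t) = x_0 + \sum_{j=1}^m N^{-1}\tilde Y_j\!\left(\int_0^t a_j^N(NX_N(s))\,ds\right)\nu_j + \int_0^t \sum_{j=1}^m N^{-1} a_j^N(NX_N(s))\,\nu_j\,ds .
\end{equation*}
Subtracting $X(t) = x_0 + \int_0^t F(X(s))\,ds$ and inserting $a_j(X_N(s))$, the error $X_N(t) - X(t)$ splits into three pieces: the noise term $\mathcal{E}_N(t) := \sum_j N^{-1}\tilde Y_j(\int_0^t a_j^N(NX_N(s))ds)\,\nu_j$, the density-dependence defect $\int_0^t \sum_j \nu_j\,[N^{-1}a_j^N(NX_N(s)) - a_j(X_N(s))]\,ds$, and $\int_0^t [F(X_N(s)) - F(X(s))]\,ds$.

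Next I would localize. Since $t$ lies in the forward maximal interval of existence, $\Gamma := \{X(s): 0\le s\le t\}$ is compact; I would fix $\rho > 0$, let $K$ be the closed $2\rho$-neighborhood of $\Gamma$, and set $\tau_N := \inf\{s\ge 0 : X_N(s)\notin K\}$. On $[0,\tau_N\wedge t]$: the density defect is bounded in norm by $(\sum_{j} |\nu_j|)\,B_K\,t/N$ by \eqref{eqn:assume2}; the last piece is bounded by $M_K\int_0^s |X_N(r)-X(r)|\,dr$ by the Lipschitz hypothesis on $F$; and each Poisson time argument satisfies $\int_0^s a_j^N(NX_N(r))\,dr \le N T_j$ with $T_j := t(\sup_{x\in K}a_j(x) + B_K) < \infty$, so $|\mathcal{E}_N(s)| \le \sum_j |\nu_j|\,\eta_{N,j}$ where $\eta_{N,j} := \sup_{0\le v\le T_j} N^{-1}|\tilde Y_j(Nv)|$ does not involve $\tau_N$. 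I would then invoke the functional strong law for the Poisson process, $\sup_{0\le v\le T} |N^{-1}Y_j(Nv) - v|\to 0$ a.s.\ (which follows from the ordinary SLLN and monotonicity of $Y_j$), to get $\eta_{N,j}\to 0$ a.s. Writing $\delta_N := \sum_j |\nu_j|\,\eta_{N,j} + (\sum_j |\nu_j|)\,B_K\,t/N\to 0$ a.s., Gronwall's inequality on $[0,\tau_N\wedge t]$ yields $\sup_{s\le\tau_N\wedge t}|X_N(s) - X(s)| \le \delta_N e^{M_K t}$.

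Finally I would bootstrap to conclude $\tau_N > t$ for all $N$ large (depending on $\omega$), thereby extending the bound to all of $[0,t]$. Choose $N$ large enough that $\delta_N e^{M_K t} < \rho$ and $\max_j|\nu_j|/N < \rho$. If $\tau_N\le t$, then $X_N(\tau_N^-)$ lies within $\delta_N e^{M_K t} < \rho$ of $X(\tau_N^-) = X(\tau_N)\in\Gamma$, hence in the interior of $K$; but $X_N$ is piecewise constant with a single jump of size at most $\max_j|\nu_j|/N < \rho$ at $\tau_N$, so $X_N(\tau_N)$ lies within $2\rho$ of $\Gamma$, i.e.\ in $K$, contradicting $X_N(\tau_N)\notin K$. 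Hence $\tau_N\wedge t = t$ eventually, and $\sup_{s\le t}|X_N(s) - X(s)| \le \delta_N e^{M_K t}\to 0$ a.s.

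I expect the delicate point to be keeping the two localizations mutually consistent: the Gronwall estimate must be sharp enough to confine $X_N$ to the same compact $K$ whose constants $B_K, M_K, \sup_{x\in K}a_j(x)$ were used to produce it, so one must be careful to build enough $\rho$-margin into $K$ and to ensure $\delta_N$ is genuinely $\tau_N$-free before running the bootstrap. The one nontrivial but standard analytic ingredient is the functional strong law for Poisson processes.
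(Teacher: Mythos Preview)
Your proof is correct and is essentially the standard argument from Ethier--Kurtz (Chapter 11, Theorem 2.1), which is exactly what the paper cites; the paper itself does not give a proof but merely restates the theorem from that reference. Your compensation-plus-Gronwall decomposition with the localization/bootstrap step matches the classical approach, and your care in making $\delta_N$ independent of $\tau_N$ (by bounding the Poisson time arguments using the compact set $K$) is the right way to close the loop.
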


\begin{remark} 
We note that with fixed initial condition $X_N(0)=x_0$ we  
want $X^N(0)= N x_0$ to belong to $\mathbb{Z}_+^n$, which may not hold for all $N
\geq 1$ but we assume that it holds for a sequence of $N$ values tending to
$\infty$. For instance if $x_0$ is rational this is true. 
This is adequate for our purposes. 
\end{remark}

In order to satisfy the conditions stated in Theorem \ref{thm:fluid} we shall 
assume the following. 

\begin{assume}\label{assume3}
For each $j=1,\dots,m$, the functions $a_j(x):\real^n \to
\real$ are continuously differentiable. This automatically implies the
Lipschitz condition in Theorem \ref{thm:fluid}. 
\end{assume}

The following assumption is used to facilitate the analysis in this paper. 
Several, but not all examples in applications satisfy this assumption. 
  
\begin{assume}\label{assume4}
We assume that the sequence of concentration processes $X_N$ is uniformly bounded, that is, there exists a constant $\Gamma$ such that
for all $t \geq 0$,
\begin{equation}\label{eqn:assume4}
|{X_N}(t)| \leq \Gamma ~~~\text{a.s.} 
\end{equation}
for all $N \geq 1$.
\end{assume}

We note that if there exists a strictly positive vector $\gamma \in \real_+^m$ so that 
$\gamma^T \nu_j \leq 0$ for each $j$ then this assumption is satisfied. 
We note that a form of converse of this statement is also true \cite{Rathinam-QAM}.

Now we turn our attention to the sensitivity. Given $f:\real^n \to \real$,
we are interested in computing the sensitivity 
\[
\frac{\partial}{\partial c} \mathbb{E}(f(X^N(t))),
\]
where $c \in (0,\infty)$ is a parameter. In view of Assumption \ref{assume1}, without loss
of generality, we shall take $c=c_1$. Then we note that the GT sensitivity
estimator is $f(X^N(t))Z^N(t)$ and the CGT estimator is
$[f(X^N(t))-\mathbb{E}(f(X^N(t))]Z^N(t)$, where we note that $Z^N(t) =
M^N_1(t)/c_1$ in this case. 

As we are concerned with families of processes indexed by $N$, it makes sense to consider a corresponding family of functions $f^N:\real^n \to \real$ instead of one function $f$ and make reasonable assumptions on $f^N$ and $f$.  

To motivate the assumption we make on $f^N$ and $f$, we note that we shall be concerned with $f^N(X^N(t)) = f^N(N X_N(t))$ which we wish to compare with $f(X(t))$. 
When $f^N(x) = x_i$, one of the components of $x$, we have 
\[
f^N(N X_N(t))/N =  {X_N}_i(t) \to X_i(t) = f(X(t)),
\]
with $f(x)=x_i$. 
Alternatively, if $f^N(x) = x_i^\alpha$ for some $\alpha>0$ we have 
\[
f^N(N X_N(t))/N^\alpha = ({X_N}_i(t))^\alpha \to (X_i(t))^\alpha  = f(X(t)),
\]
with $f(x)=x_i^\alpha$. If however $f^N(x) = x_i^2 + x_i$ then we have
\[
f^N(N X_N(t))/N^2 = ({X_N}_i(t))^2 + {X_N}_i(t)/N \to (X_i(t))^2 = f(X(t)),
\]
where $f(x) = x_i^2$. In this case we note that $f^N(Nx)/N^2 - f(x) = x_i/N$ 
which tends to $0$ as $1/N$, uniformly for $x$ in a compact set.  
Motivated by this, we impose the following assumption. 

\begin{assume}\label{assume5}
We assume that there exist a function $f$ and a constant $\alpha > 0$ such that
for each compact set $K \subset \real_+^n$, 
\begin{equation}
\left| f^N(Nx)/N^\alpha  - f(x) \right| \leq \frac{L_K}{\sqrt{N}}, \quad x \in K, \;\; N \geq 1,
\label{eq-fN-limit}
\end{equation}
for some constant $L_K > 0$.
\end{assume}

We remark that the $\mathcal{O}(1/\sqrt{N})$ behavior is adequate for our proofs.

We note that the running assumptions 1-5 will be assumed throughout the rest of the paper.

\section{Large $N$ behavior}

In this section we derive results concerning the $N \to \infty$ limit 
for the various relevant processes.
Throughout the rest of the paper $X(t)$ will denote the solution of the 
equation
\begin{equation}\label{eq-X}
X(t) = x_0 + \sum_{j=1}^m \nu_j \int_0^t a_j(X(s)) ds,
\end{equation}
where $x_0 \in \real_+^n$ is fixed. 

\begin{lemma}\label{lem-Aj}
For each $j = 1, \cdots, m$, there exists
$A_j > 0$ such that, for all $t > 0$
\[\frac{a_j^N(N X_N(t))}{N} \leq A_j~~~\text{a.s.}\]
for all $N \geq 1$.
\end{lemma}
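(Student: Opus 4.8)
The plan is to combine the uniform boundedness of the concentration processes (Assumption \ref{assume4}) with the density-dependence estimate (Assumption \ref{assume2}) and the continuity of the limiting intensities (Assumption \ref{assume3}). The whole point is that $X_N(t)$ never leaves one fixed compact set, independent of $N$, so all the $N$-uniform bounds we already have on compacts can be applied at the random point $X_N(t)$.

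First I would set $K = \{x \in \real_+^n : |x| \leq \Gamma\}$, where $\Gamma$ is the constant from Assumption \ref{assume4}. This set is compact, and since $X^N$ takes values in $\posint^n$ we have $X_N(t) = N^{-1} X^N(t) \in \real_+^n$, so Assumption \ref{assume4} gives $X_N(t) \in K$ a.s.\ for every $t \geq 0$ and every $N \geq 1$.

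Next I would invoke Assumption \ref{assume3}: each $a_j$ is continuous (being $C^1$) on $\real^n$, hence bounded on the compact set $K$; write $C_j := \sup_{x \in K} |a_j(x)| < \infty$. Then I would apply the estimate \eqref{eqn:assume2} from Assumption \ref{assume2} with this $K$: there is $B_K > 0$ with $|a_j^N(Nx)/N - a_j(x)| \leq B_K/N \leq B_K$ for all $x \in K$ and $N \geq 1$. Evaluating at $x = X_N(t)$ and using the triangle inequality,
\[
\frac{a_j^N(N X_N(t))}{N} \leq \left| \frac{a_j^N(N X_N(t))}{N} - a_j(X_N(t)) \right| + |a_j(X_N(t))| \leq B_K + C_j \quad \text{a.s.},
\]
for all $t > 0$ and $N \geq 1$. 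Setting $A_j := B_K + C_j > 0$ finishes the argument.

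There is no genuine obstacle; the only thing to be careful about is that the bound must be truly $N$-uniform, which is precisely why we use the two-sided estimate \eqref{eqn:assume2} (whose right-hand side is at most $B_K$ once $N \geq 1$) rather than the bare convergence $a_j^N(Nx)/N \to a_j(x)$, and why we need $X_N(t)$ to stay in a single $N$-independent compact set, which is exactly what Assumption \ref{assume4} supplies.
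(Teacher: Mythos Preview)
Your argument is correct and follows essentially the same approach as the paper: confine $X_N(t)$ to a single $N$-independent compact set via Assumption~\ref{assume4}, then combine the uniform bound from Assumption~\ref{assume2} with the continuity of $a_j$ from Assumption~\ref{assume3}. The paper phrases the last step more tersely (``it is apparent that $\sup_{x \in K} N^{-1}a_j^N(Nx)$ is bounded by continuity of $a_j$''), whereas you spell out the triangle inequality and the explicit constant $A_j = B_K + C_j$, but the content is the same.
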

\begin{proof}
By Assumption \ref{assume4}, the processes $X_N$ are contained in a compact set of $\mathbb{R}^{n}$, say $K$, therefore, for each $j$ we
have the estimation
\begin{equation*}
\sup_{t\geq 0} \frac{a_j^N(N X_N(t))}{N} \leq \sup_{x \in K} \frac{a_j^N(N x)}{N}.
\end{equation*}
Since $N^{-1}a_j^N(N x)$ converges uniformly to $a_j(x)$ for $x \in K$ by \eqref{eqn:assume2} in Assumption \ref{assume2}, it is apparent that
$\sup_{x \in K} N^{-1}{a_j^N(N x)}$ is bounded by continuity of $a_j$. Hence $\sup_{t \geq 0} N^{-1}{a_j^N(N X_N(t))}$ is bounded by a constant $A_j$.
\end{proof}

\begin{lemma}\label{lem:intensity}
For each
$j = 1, \cdots, m$, and $t > 0$, we have 
\[
\sup_{s \leq t} \left|\frac{a_j^N(NX_N(s))}{N}-a_j(X(s))\right| \to 0, \quad \text{a.s.}
\]
as $N \to \infty$.
\end{lemma}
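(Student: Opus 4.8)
The plan is to bound the quantity $\sup_{s\le t}|a_j^N(NX_N(s))/N - a_j(X(s))|$ by splitting it through an intermediate term, namely
\[
\left|\frac{a_j^N(NX_N(s))}{N} - a_j(X_N(s))\right| + \left|a_j(X_N(s)) - a_j(X(s))\right|.
\]
The first term is handled purely by Assumption \ref{assume2}: since $X_N(s)$ lives in a fixed compact set $K$ (Assumption \ref{assume4}), inequality \eqref{eqn:assume2} gives a uniform bound of $B_K/N$ for \emph{all} $s$, which tends to $0$. The second term is handled by combining the fluid limit (Theorem \ref{thm:fluid}), which gives $\sup_{s\le t}|X_N(s)-X(s)|\to 0$ a.s., with continuity — in fact uniform continuity, since by Assumption \ref{assume3} $a_j$ is $C^1$ and we can restrict attention to the compact set $K$ (which contains both the $X_N(s)$ and, by taking $K$ large enough or by the a.s.\ convergence, also $X(s)$ for $s\le t$). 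On $K$, $a_j$ is Lipschitz with some constant, so $|a_j(X_N(s)) - a_j(X(s))| \le M_K |X_N(s)-X(s)|$, and taking $\sup_{s\le t}$ and letting $N\to\infty$ kills this term a.s.

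Concretely, the steps in order: (i) invoke Assumption \ref{assume4} to fix a compact $K\subset\real^n$ with $X_N(s)\in K$ for all $N,s$; argue that $X(s)\in K$ for $s\le t$ as well, either because $X$ is the a.s.\ limit of $X_N$ and $K$ is closed, or by enlarging $K$ slightly. (ii) Apply the triangle inequality as above. (iii) Bound the first piece by $B_K/N$ using \eqref{eqn:assume2}. (iv) Bound the second piece by $M_K\sup_{s\le t}|X_N(s)-X(s)|$ using the local Lipschitz property of $a_j$ from Assumption \ref{assume3}. (v) Take $\limsup_{N\to\infty}$: the first piece vanishes deterministically, and the second vanishes a.s.\ by Theorem \ref{thm:fluid}. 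Since the bound is uniform in $s\le t$, the supremum converges.

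I expect the only mild subtlety — not really an obstacle — to be the bookkeeping about which compact set to use and making sure $X(s)$ for $s\le t$ is captured by it so that the Lipschitz constant $M_K$ applies. Assumption \ref{assume4} gives uniform boundedness of the $X_N$, and since $X_N\to X$ uniformly on $[0,t]$ a.s., on that event $X(s)$ lies in the closure of $K$, which we may take to be $K$ itself (a closed ball of radius $\Gamma$). Everything else is a routine triangle-inequality estimate, and the $o(1)$ rate needed is weaker than what the assumptions supply.
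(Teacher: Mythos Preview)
Your proposal is correct and follows essentially the same approach as the paper's own proof: the same triangle-inequality split through $a_j(X_N(s))$, with the first piece controlled by Assumptions \ref{assume2} and \ref{assume4} and the second piece by the Lipschitz property of $a_j$ on a compact set (Assumptions \ref{assume3}, \ref{assume4}) combined with Theorem \ref{thm:fluid}. Your write-up is in fact slightly more explicit about why $X(s)$ also lies in the compact set $K$, which the paper leaves implicit.
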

\begin{proof}
We may write 
\begin{equation*}
\begin{split}
&\left|\frac{a_j^N(NX_N(s))}{N} - a_j(X(s))\right| \\
 \leq & \left|\frac{a_j^N(NX_N(s))}{N} - a_j(X_N(s))\right| + \left|a_j(X_N(s)) - a_j(X(s))\right|.
\end{split}
\end{equation*}
The first part on the right hand side converges to zero uniformly for $s $ in $ [0,t]$ because of Assumption \ref{assume2} and Assumption \ref{assume4}.
To see that the second part on the right hand side converges uniformly to $0$ on $[0, t]$, note that by Assumption \ref{assume3} and Assumption \ref{assume4}, $a_j$ is Lipschitz continuous on the compact set $K$ (which contains $X_N$ and $X$), hence the result follows by Theorem
\ref{thm:fluid}.   
\end{proof}

We define a family of scaled reaction count processes $R_N(t)$ by $R_N(t)=R^N(t)/N$.
\begin{lemma}\label{thm:R-fluid}
For each $j = 1, 2,\cdots, m$ and $t > 0$,
\[
\sup_{s \leq t} \left|{R_N}_j(s) - \int_0^s a_j(X(u)) du \right| \to 0 ~~\text{a.s.}\]
as $N \to \infty$.
\end{lemma}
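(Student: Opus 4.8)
The plan is to write $R^N_j(t)/N$ in terms of the underlying Poisson process $Y_j$ via the random time change representation and then exploit the functional law of large numbers for the Poisson process together with Lemma~\ref{lem:intensity}. Recall that
\[
R_N{}_j(s) = \frac{1}{N} Y_j\!\left(\int_0^s a_j^N(N X_N(u))\, du\right)
= \frac{1}{N} Y_j\!\left(N \int_0^s \frac{a_j^N(N X_N(u))}{N}\, du\right).
\]
Set $U_N(s) = \int_0^s N^{-1} a_j^N(N X_N(u))\, du$ and $U(s) = \int_0^s a_j(X(u))\, du$. By Lemma~\ref{lem:intensity}, $\sup_{s \le t} |U_N(s) - U(s)| \to 0$ a.s. Also, by Lemma~\ref{lem-Aj}, $U_N(s) \le A_j s$, so the time arguments all stay inside the compact interval $[0, A_j t]$ (enlarging $t$ slightly if needed, and noting $U(s) \le A_j t$ as well since $U$ is a limit of the $U_N$ on this range).

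The key ingredient is the strong law of large numbers for the Poisson process in functional form: $\sup_{0 \le r \le T} |N^{-1} Y_j(Nr) - r| \to 0$ a.s.\ as $N \to \infty$, for each fixed $T > 0$. This follows from the ordinary SLLN applied to $Y_j$ at integer times together with the monotonicity of $Y_j$ (a standard interpolation argument controls the gaps between integers). Define $\tilde Y_N(r) = N^{-1} Y_j(Nr)$, which is a.s.\ nondecreasing in $r$ and converges uniformly to the identity map on $[0, A_j t]$. Then
\[
\left| R_N{}_j(s) - U(s) \right|
= \left| \tilde Y_N(U_N(s)) - U(s) \right|
\le \left| \tilde Y_N(U_N(s)) - U_N(s) \right| + \left| U_N(s) - U(s) \right|.
\]
The first term is bounded by $\sup_{0 \le r \le A_j t} |\tilde Y_N(r) - r|$, which tends to $0$ a.s.; the second term tends to $0$ a.s.\ uniformly in $s \le t$ by Lemma~\ref{lem:intensity}. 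Taking the supremum over $s \le t$ on the left and combining these bounds yields the claim.

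The main obstacle — really the only nonroutine point — is making the functional SLLN for the Poisson process rigorous and checking that the composition $\tilde Y_N \circ U_N$ is controlled uniformly: one needs the uniform convergence of $\tilde Y_N$ on a \emph{fixed} compact time interval, and for that one must know a priori (via Lemma~\ref{lem-Aj}) that the random time arguments $U_N(s)$ never leave $[0, A_j t]$, which is exactly why Assumption~\ref{assume4} and Lemma~\ref{lem-Aj} were set up beforehand. Everything else is a triangle inequality. I would also remark that the same argument, applied componentwise, gives the joint statement over all $j = 1, \dots, m$ simultaneously on a single almost-sure event.
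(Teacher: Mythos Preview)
Your proof is correct and follows essentially the same route as the paper: split $R_{N,j}(s) - \int_0^s a_j(X(u))\,du$ via the intermediate quantity $U_N(s)=\int_0^s N^{-1}a_j^N(NX_N(u))\,du$, control $|U_N-U|$ uniformly by Lemma~\ref{lem:intensity}, and handle $|\tilde Y_N(U_N(s))-U_N(s)|$ by confining the random time argument to $[0,A_jt]$ via Lemma~\ref{lem-Aj} and invoking the functional law of large numbers for the Poisson process. The paper writes this with $\tilde Y_j(t)=Y_j(t)-t$ rather than your $\tilde Y_N(r)=N^{-1}Y_j(Nr)$, but the two formulations are equivalent and the decomposition, the lemmas used, and the final appeal to the Poisson LLN are identical.
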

\begin{proof}
Recall that $R_j^N(t) = Y_j\left(\int_0^t a_j^N(NX_N(s)) ds\right)$. For each $j = 1, \cdots, m,$
\begin{equation*}
\begin{split}
&\sup_{s \leq t} \left|\frac{1}{N}Y_j\left(\int_0^s a_j^N(NX_N(u)) du\right) - \int_0^s a_j(X(u)) du \right|\\
\leq & \sup_{s \leq t} \left|\frac{1}{N}Y_j\left(\int_0^s a_j^N(NX_N(u)) du\right) - \frac{1}{N}\int_0^s a_j^N(NX_N(u)) du\right|\\
  & +  \int_0^t \left| \frac{1}{N}a_j^N(NX_N(u)) - a_j(X(u))
\right| du.
\end{split}
\end{equation*}
The second term on the right hand side converges to zero 
by Lemma \ref{lem:intensity}. 
Setting $\tilde{Y}(t) = Y(t) - t$,
the first term on the right can be written and then bounded as
\begin{equation*}
\sup_{s \leq t} \left|\frac{1}{N}\tilde{Y}_j\left(\int_0^s a_j^N(NX_N(u)) du\right)\right|
 \leq \sup_{s \leq t} \left|\frac{1}{N}\tilde{Y}_j\left(NA_j s\right)\right| \quad \text{a.s.},
\end{equation*}
where the last term converges to zero by the law of large numbers for Poisson processes (see Theorem $1.2$ in \cite{Kurtz-survey} ). 
\end{proof}

\begin{lemma}\label{lem-fN-limit}
For a given $t > 0$, suppose that $f$ is continuous at $X(t)$.
Then
\begin{equation}
\lim_{N \to \infty} |f^N(N X_N(t))/N^\alpha -f(X(t))| = 0, \;\; \text{a.s.}
\end{equation}
\end{lemma}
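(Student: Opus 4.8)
The plan is to insert the intermediate quantity $f(X_N(t))$ and split by the triangle inequality:
\begin{equation*}
\left|\frac{f^N(NX_N(t))}{N^\alpha} - f(X(t))\right| \leq \left|\frac{f^N(NX_N(t))}{N^\alpha} - f(X_N(t))\right| + \left|f(X_N(t)) - f(X(t))\right|,
\end{equation*}
and then show that each of the two terms on the right tends to $0$ almost surely as $N \to \infty$.

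For the first term, I would first use Assumption \ref{assume4} to fix a (deterministic) compact set $K \subset \real_+^n$ with $X_N(s) \in K$ for all $s \geq 0$, all $N \geq 1$, almost surely. In particular $X_N(t) \in K$ almost surely, so Assumption \ref{assume5} applies pointwise with $x = X_N(t)$ and yields $|f^N(NX_N(t))/N^\alpha - f(X_N(t))| \leq L_K/\sqrt{N}$, which converges to $0$ as $N \to \infty$. Note also that $X(t) \in K$ since $K$ is closed and, by Theorem \ref{thm:fluid}, $X_N(t) \to X(t)$.

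For the second term, Theorem \ref{thm:fluid} gives $X_N(t) \to X(t)$ almost surely (indeed $\sup_{s\leq t}|X_N(s)-X(s)| \to 0$). Since $f$ is assumed continuous at the point $X(t)$, composition with this almost sure convergence gives $f(X_N(t)) \to f(X(t))$ almost surely. Combining the two bounds finishes the proof.

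There is no substantial obstacle here; the statement is essentially a bookkeeping consequence of the fluid limit (Theorem \ref{thm:fluid}), the uniform boundedness (Assumption \ref{assume4}), and the scaling hypothesis on $f^N$ (Assumption \ref{assume5}). The one point deserving care is that $f$ is posited continuous only at the single point $X(t)$ rather than globally; this is exactly why the decomposition is arranged so that continuity is invoked only along the sequence $X_N(t) \to X(t)$, and the full-strength Assumption \ref{assume5} (which controls $f^N$ uniformly on $K$) handles the rescaling portion separately.
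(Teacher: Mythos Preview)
Your proposal is correct and follows essentially the same approach as the paper's own proof: the same triangle-inequality split via the intermediate term $f(X_N(t))$, with the first piece handled by Assumptions~\ref{assume4} and~\ref{assume5} and the second by the fluid limit (Theorem~\ref{thm:fluid}) together with continuity of $f$ at $X(t)$. If anything, you have spelled out a few details (e.g., $X(t)\in K$ and the care taken with pointwise continuity) that the paper leaves implicit.
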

\begin{proof}
Write
\begin{equation*}
\begin{split}
\left| f^N(N X_N(t))/N^\alpha - f(X(t)) \right| \leq  &\left|  f^N(N X_N(t))/N^\alpha - f(X_N(t))\right| \\
& + \left| f(X_N(t))-f(X(t))\right|.
\end{split}
\end{equation*}
The first term converges to zero almost surely by Assumption \ref{assume4} and \eqref{eq-fN-limit} in Assumption \ref{assume5}. The second term
converges to zero by the continuity assumption on $f$ since $X_N(t)$ converges to $ X(t)$ almost surely.
\end{proof}
%

Recall the definition of $M^N$,
\[M^N(t) = R^N(t) - \int_0^t a^N(N X_N(s)) ds.\]
Note that in general, $M^N(t)$ is an $m$-dimensional local martingale (see \cite{Protter, Jacod-Shiryaev} for definition) for each $N$, but by Lemma
\ref{lem-Aj} it follows that $\mathbb{E}[R_j^N(t)] \leq N A_j t$ for all $t > 0$
which makes $M^N(t)$ a martingale. 
We define the scaled processes $M_N = N^{-1}M^N$ and $Z_N = N^{-1}Z^N$. 
We note that $Z^N(t) = M^N_1(t)/c_1$ and $Z_N(t) = {M_N}_1(t)/c_1$.

Let us denote by $D^m[0, \infty)$ the space of c\`adl\`ag functions mapping
  from $[0,\infty)$ into $\mathbb{R}^m$, endowed with the Skorohod topology (see
    \cite{pb1999} for definitions).
We provide a lemma on the weak convergence of $M_N$.
\begin{lemma}\label{thm:M-CLT}
Let $C(t) = (c_{ij}(t))$ be the $m \times m$ matrix-valued function, where
\begin{equation}
c_{ij}(t) = \left\{
\begin{array}{cc}
\int_0^t a_j(X(s)) ds  &i = j\\
0 & i \neq j.
\end{array}\right.
\end{equation}
Then $\sqrt{N}M_N \Rightarrow \bar{M}$ on $D^m[0, \infty)$,
where $\bar{M}(t)$ is an $m$-dimensional Gaussian process with independent increments, having mean vector and covariance matrix
\begin{equation}
\mathbb{E}[\bar{M}(t)] = (0, \cdots, 0), \;\; \mathbb{E}[\bar{M}(t)\bar{M}(t)^{T}] = C(t).
\end{equation}
In particular, the scaled Girsanov sensitivity (or weight) process $\sqrt{N}Z_N \Rightarrow U$ on $D[0, \infty)$, where
\begin{equation}\label{equ:U}
U(t) = \frac{1}{c_1} \bar{M}_1(t).
\end{equation}
Also since $U$ has continuous sample paths, for each $t > 0$, we have
\[\sqrt{N}Z_N(t) \Rightarrow U(t).\] 
\end{lemma}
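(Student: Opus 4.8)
The plan is to establish the functional central limit theorem for $\sqrt{N}M_N$ by invoking a martingale CLT (functional version), then extract the statement about $\sqrt{N}Z_N$ as a consequence of the continuous-mapping theorem, and finally deduce the one-dimensional convergence at a fixed $t$ from the path continuity of the limit. First I would recall that $M^N(t) = R^N(t) - \int_0^t a^N(NX_N(s))\,ds$ is, by Lemma \ref{lem-Aj}, a genuine (square-integrable) martingale with respect to the filtration generated by the Poisson processes $Y_j$. Writing $\sqrt{N}M_N = N^{-1/2}M^N$, I would compute the predictable quadratic variation: since the $R_j^N$ are counting processes with intensity $a_j^N(NX_N(s-))$, jumping by $1$, and since distinct reaction channels are driven by independent Poissons and hence never jump simultaneously, we get $\langle N^{-1/2}M^N_i, N^{-1/2}M^N_j\rangle(t) = N^{-1}\delta_{ij}\int_0^t a_j^N(NX_N(s))\,ds = \delta_{ij}\int_0^t N^{-1}a_j^N(NX_N(s))\,ds$. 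By Lemma \ref{lem:intensity} this converges almost surely, uniformly on $[0,t]$, to $\delta_{ij}\int_0^t a_j(X(s))\,ds = c_{ij}(t)$.

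Next I would verify the remaining hypothesis of the martingale FCLT (the version in, e.g., Ethier--Kurtz Chapter 7, Theorem 1.4, or Jacod--Shiryaev), namely a Lindeberg-type condition controlling the large jumps: the jumps of $N^{-1/2}M^N$ have size $N^{-1/2}$, so for any $\e>0$ the sum of squared jumps exceeding $\e$ is eventually zero once $N > \e^{-2}$; equivalently $\mathbb{E}\big[\sum_{s\le t} |\Delta (N^{-1/2}M^N)(s)|^2 \mathbf{1}_{\{|\Delta(N^{-1/2}M^N)(s)|>\e\}}\big] \to 0$. Since the limiting covariance $C(t)$ is continuous (indeed deterministic and absolutely continuous) in $t$ and the jumps vanish, the martingale FCLT yields $\sqrt{N}M_N \Rightarrow \bar M$ on $D^m[0,\infty)$, where $\bar M$ is the continuous Gaussian martingale (hence a process with independent increments) with $\mathbb{E}[\bar M(t)]=0$ and $\mathbb{E}[\bar M(t)\bar M(t)^T] = C(t)$, as claimed. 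The statement $\sqrt{N}Z_N \Rightarrow U$ with $U(t)=\bar M_1(t)/c_1$ then follows immediately from $Z_N(t) = {M_N}_1(t)/c_1$ and the continuous-mapping theorem applied to the projection onto the first coordinate (scaled by $1/c_1$), which is continuous $D^m[0,\infty)\to D[0,\infty)$. Finally, since $U$ has almost surely continuous sample paths, the coordinate evaluation map $\pi_t: D[0,\infty) \to \real$, $\pi_t(w)=w(t)$, is continuous at every path in the support of $U$, so a further application of the continuous-mapping theorem gives $\sqrt{N}Z_N(t) \Rightarrow U(t)$.

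The main obstacle I anticipate is the careful checking of the martingale CLT hypotheses in a form that accommodates the random (path-dependent) intensities: one must confirm that the a.s.\ uniform convergence of the predictable quadratic variations from Lemma \ref{lem:intensity} is strong enough for the theorem being cited, and that the jump condition is handled uniformly rather than just pathwise. A clean way around this is to note that $N^{-1/2}M^N$ can be written as $N^{-1/2}\sum_j \tilde Y_j\!\big(\int_0^t a_j^N(NX_N(s))\,ds\big)\nu_j$ with $\tilde Y_j$ centered unit-rate Poissons, so the convergence is governed by a random time change of independent centered Poisson processes; combined with the a.s.\ convergence $\int_0^t N^{-1}a_j^N(NX_N(s))\,ds \to \int_0^t a_j(X(s))\,ds$ and the FCLT for the Poisson process ($N^{-1/2}\tilde Y_j(N\cdot) \Rightarrow$ standard Brownian motion), a random-time-change argument (Ethier--Kurtz, Chapter 6) delivers the result with the independence across $j$ preserved in the limit. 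Everything else — the mean and covariance identification, the projection to obtain $U$, and the fixed-time corollary — is routine given path continuity of the limit.
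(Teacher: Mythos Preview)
Your proposal is correct and follows essentially the same route as the paper: verify the jump-size condition and the convergence of the quadratic (co)variations, then invoke a martingale functional CLT. The only cosmetic difference is that you work with the \emph{predictable} quadratic variation $\langle \sqrt{N}{M_N}_i,\sqrt{N}{M_N}_j\rangle(t)=\delta_{ij}\int_0^t N^{-1}a_j^N(NX_N(s))\,ds$ and cite Lemma~\ref{lem:intensity}, whereas the paper uses the \emph{optional} quadratic variation $[\sqrt{N}{M_N}_i,\sqrt{N}{M_N}_j](t)=\delta_{ij}{R_N}_j(t)$ and cites Lemma~\ref{thm:R-fluid}; either choice matches the hypotheses of some standard version of the martingale FCLT, and the remaining steps (projection to $Z_N$, fixed-time convergence via continuity of $U$) are identical.
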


\begin{proof}
The proof relies on the martingale functional central limit theorem (FCLT) proved in \cite{Whitt}. Note that each jump of $\sqrt{N}M_N$ has size 
$1/{\sqrt{N}}$, therefore,
\[
\lim_{N\to \infty} \mathbb{E}\left[\sup_{s \leq t}\left|\sqrt{N}M_N(s) - \sqrt{N}M_N(s-)\right|\right] = 0.
\]
Also, for each pair $(i,j)$ with $i, j = 1, \cdots, m$, and each $t > 0$, since the jump size for ${M_N}_j$ is always $N^{-1}$ and there are no
simultaneous jumps, we have the following quadratic covariation
\begin{equation}
\left[\sqrt{N}{M_N}_i, \sqrt{N}{M_N}_j\right](t) = \left\{
\begin{array}{cc}
{R_N}_j  &i = j\\
0 & i \neq j.
\end{array}\right.
\end{equation}
By Lemma \ref{thm:R-fluid}, ${R_N}_j(t)$ converges almost surely to $c_{jj}(t) = \int_0^t a_j(X(s))ds$. Then,
for each pair $(i,j)$,
\[\left[\sqrt{N}{M_N}_i, \sqrt{N}{M_N}_j\right](t) \to c_{ij}(t)\]
almost surely and hence in probability. Thus, the weak convergence of $M_N$
follows from the martingale FCLT.
\end{proof}



\begin{lemma}\label{lem:moment-M}
For each $p \geq 1$,
there exists a constant $\beta(p)$ such that for all $t > 0$
\begin{equation}
\limsup_N \mathbb{E}\left(\sup_{s \leq t}\left|\sqrt{N}
M_{N}(s)\right|\right)^p \leq \beta(p) t^{p/2}.
\end{equation}
\end{lemma}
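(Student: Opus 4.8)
\emph{Proof proposal.} The plan is to exploit the fact that each component of $M^N$ is a compensated unit-rate Poisson process evaluated at a random, but almost surely bounded, clock. Write $\tilde Y_j(u) = Y_j(u) - u$ for the compensated Poisson martingale and $\tau^N_j(t) = \int_0^t a^N_j(N X_N(s))\,ds$, so that $M^N_j(t) = \tilde Y_j(\tau^N_j(t))$. Since $u \mapsto \sup_{0 \le v \le u}|\tilde Y_j(v)|$ is nondecreasing and, by Lemma~\ref{lem-Aj}, $\tau^N_j(t) \le N A_j t$ almost surely (uniformly in $t$), one obtains the pathwise, almost sure bound
\[
\sup_{s \le t}\bigl|M^N_j(s)\bigr| = \sup_{s \le t}\bigl|\tilde Y_j(\tau^N_j(s))\bigr| \le \sup_{0 \le v \le N A_j t}\bigl|\tilde Y_j(v)\bigr|.
\]
This reduces everything to a bound on the $p$-th moment of the running maximum of $\tilde Y_j$ over a \emph{deterministic} interval $[0,U]$ with $U = N A_j t$; the $\limsup_N$ in the statement is exactly what lets us take $U$ large.

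Next I would establish that for every $p \ge 1$ there is a constant $C_p$ with $\mathbb{E}\bigl(\sup_{0 \le v \le U}|\tilde Y_j(v)|\bigr)^p \le C_p\, U^{p/2}$ for all $U \ge 1$. Fix an even integer $k \ge \max(p,2)$. Using Jensen's inequality with the concave power $p/k \le 1$, then Doob's $L^k$ maximal inequality for the martingale $\tilde Y_j$ (together with $|\tilde Y_j(U)|^k = \tilde Y_j(U)^k$),
\[
\mathbb{E}\bigl(\sup_{v \le U}|\tilde Y_j(v)|\bigr)^p \le \Bigl(\mathbb{E}\bigl(\sup_{v \le U}|\tilde Y_j(v)|\bigr)^k\Bigr)^{p/k} \le \Bigl(\bigl(\tfrac{k}{k-1}\bigr)^k \mathbb{E}\bigl[\tilde Y_j(U)^k\bigr]\Bigr)^{p/k}.
\]
Here $\mathbb{E}[\tilde Y_j(U)^k]$ is the $k$-th central moment of a Poisson$(U)$ variable, a polynomial in $U$ of degree $k/2$ with nonnegative coefficients, hence bounded by $\mathrm{const}\cdot U^{k/2}$ for $U \ge 1$; substituting gives the claim. (Alternatively one may quote the Burkholder--Davis--Gundy inequality, $\mathbb{E}(\sup_{v\le U}|\tilde Y_j(v)|)^p \le C_p\,\mathbb{E}[Y_j(U)^{p/2}]$, and bound $\mathbb{E}[Y_j(U)^{p/2}]$ in the same way.)

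Combining the two steps, for all $N$ large enough that $N A_j t \ge 1$ for every $j$ (components with $A_j = 0$ vanish identically by Lemma~\ref{lem-Aj} and can be dropped), $\mathbb{E}(\sup_{s \le t}|M^N_j(s)|)^p \le C_p (N A_j t)^{p/2}$. Using equivalence of norms on $\real^m$ to write $|M^N(s)| \le c_0 \sum_{j=1}^m |M^N_j(s)|$ and the elementary inequality $(\sum_{j=1}^m b_j)^p \le m^{p-1}\sum_{j=1}^m b_j^p$, we get
\[
\mathbb{E}\Bigl(\sup_{s \le t}\bigl|\sqrt{N} M_N(s)\bigr|\Bigr)^p = N^{-p/2}\,\mathbb{E}\Bigl(\sup_{s \le t}\bigl|M^N(s)\bigr|\Bigr)^p \le c_0^p\, m^{p-1} C_p \Bigl(\sum_{j=1}^m A_j^{p/2}\Bigr) t^{p/2},
\]
and letting $N \to \infty$ yields the statement with $\beta(p) = c_0^p m^{p-1} C_p \sum_{j=1}^m A_j^{p/2}$ (by Jensen this also covers the weaker reading in which the power $p$ sits outside the expectation).

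The only genuinely delicate point is the reduction in the first paragraph: the clock $\tau^N_j$ is path-dependent and need not be a stopping time, so the domination must be argued pathwise from the almost sure, $t$-uniform bound of Lemma~\ref{lem-Aj} rather than via optional stopping. Everything after that is routine martingale moment estimation; what makes the argument succeed --- as opposed to the naive bound $\mathbb{E}(\sup_{s\le t}|M^N_j(s)|)^p \le 2^{p-1}\bigl(\mathbb{E}[R^N_j(t)^p] + (N A_j t)^p\bigr)$, which is of order $(N A_j t)^p$ and would give only the useless order $N^{p/2}$ for $\mathbb{E}(\sup_{s\le t}|\sqrt N M_N(s)|)^p$ --- is precisely the martingale cancellation encoded in Doob's (or BDG's) inequality, which upgrades the $O(U^p)$ size estimate to the $O(U^{p/2})$ fluctuation estimate.
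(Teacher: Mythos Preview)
Your proof is correct and follows essentially the same strategy as the paper: bound the random clock by the deterministic upper bound $N A_j t$ via Lemma~\ref{lem-Aj}, invoke a martingale maximal inequality, and finish with the polynomial growth of Poisson moments. The only cosmetic differences are that the paper applies the Burkholder--Davis--Gundy inequality directly to the vector martingale $\sqrt{N}M_N$ (obtaining the quadratic variation $N^{-1}\sum_j Y_j(\tau^N_j(t))$) and then bounds the clock inside the expectation, whereas you first do the pathwise time-change reduction componentwise and then apply Doob's $L^k$ inequality to the compensated Poisson process over a deterministic interval; your use of central Poisson moments (degree $k/2$) is slightly tidier than the paper's use of raw moments (degree $p$) followed by an extra Jensen step, but the net estimate is the same.
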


\begin{proof}
Observe that the quadratic variation (see \cite{Protter} for definition) of $\sqrt{N} M_{N}$ is

\[\left[\sqrt{N} M_{N}, \sqrt{N} M_{N}\right](t)   =   N^{-1}  \sum_{j=1}^{m}  Y_j\left(\int_0^t a_j^N (N X_N(s))ds\right).\]

By the Burkholder-Davis-Gundy inequality (see \cite{Protter}), there exists a constant $C(p)$ (depends on $p$) such that
\begin{equation*}
\begin{split}
\mathbb{E}\left(\sup_{s \leq t}\left|\sqrt{N}  M_{N}(s) \right|      \right)^p \leq  &C(p) \mathbb{E}\left( \frac{1}{N} \sum_{j=1}^m   Y_j\left(\int_0^t a_j^N(N X_N(s))ds\right)\right)^{p/2}\\
\leq & C(p)\mathbb{E} \left( \frac{1}{N} \sum_{j=1}^m  Y_j\left(NA_jt \right)\right)^{p/2}\\
\leq & C(p) N^{-p/2} \left(\mathbb{E}\left(\sum_{j=1}^m  Y_j(N A_j t)   \right)^p\right)^{1/2},
\end{split}
\end{equation*}
where we have used Lemma \ref{lem-Aj}. 

Hence,
\[\limsup_{N}  \mathbb{E}\left(\sup_{s \leq t}\left|\sqrt{N} M_{N}(s) \right|      \right)^p  \leq    \limsup_N C(p) N^{-p/2} \left(\mathbb{E}\left(\sum_{j=1}^m Y_j(N A_j t)   \right)^p\right)^{1/2}.\]

First we observe that for $j=1,\cdots, m$, the $p$th moment of the Poisson
random variable $Y_j(NA_j t)$ is a polynomial of degree $p$ in $NA_j t$. Also,
noting that $Y_j$ are independent, we obtain that the right hand side is
bounded by a term $\beta(p)t^{p/2}$, where $\beta(p)$ is a constant.

\end{proof}

Since $Z^N(t) = {c_1}^{-1}M_1^N(t)$, we immediately have the following property regarding the process $Z_N$.
\begin{lemma}\label{lem:moment-Z}
For each $p \geq 1$, there exists a constant $\gamma(p)$ such that for all $t > 0$,
\begin{equation}
\limsup_N \mathbb{E}\left(\sup_{s \leq t}\sqrt{N} \left|
Z_N(s)\right|\right)^p \leq \gamma(p)t^{p/2}.
\end{equation}
\end{lemma}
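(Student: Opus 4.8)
The plan is to deduce this directly from Lemma \ref{lem:moment-M}. Recall from the discussion preceding Lemma \ref{thm:M-CLT} that $Z^N(t) = M^N_1(t)/c_1$, and hence, dividing by $N$, that $Z_N(t) = {M_N}_1(t)/c_1$. Since $c_1>0$ is a fixed constant, this reduces the claim to a statement about a single component of the vector-valued process $M_N$, for which Lemma \ref{lem:moment-M} already provides the desired $t^{p/2}$ bound.

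First I would observe that, for the fixed norm $|\cdot|$ on $\real^m$ chosen at the start of Section~2, there is a constant $\kappa>0$ (depending only on this norm) such that $|y_1| \le \kappa\,|y|$ for every $y=(y_1,\dots,y_m) \in \real^m$; this is simply the equivalence of norms on $\real^m$, applied to compare $|\cdot|$ with the first coordinate projection (and for the usual $\ell^1,\ell^2,\ell^\infty$ norms one may take $\kappa=1$). Consequently, for every $s$,
\[
\sqrt{N}\,\bigl|Z_N(s)\bigr| = \frac{1}{c_1}\,\sqrt{N}\,\bigl|{M_N}_1(s)\bigr| \le \frac{\kappa}{c_1}\,\sqrt{N}\,\bigl|M_N(s)\bigr|,
\]
and therefore $\sup_{s \le t}\sqrt{N}\,|Z_N(s)| \le (\kappa/c_1)\sup_{s \le t}\sqrt{N}\,|M_N(s)|$ almost surely.

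Next I would raise both sides to the power $p \ge 1$ (a monotone operation on nonnegative reals), take expectations, and then take $\limsup_N$. Applying Lemma \ref{lem:moment-M} to the right-hand side yields
\[
\limsup_N \mathbb{E}\Bigl(\sup_{s \le t}\sqrt{N}\,\bigl|Z_N(s)\bigr|\Bigr)^p \le \Bigl(\frac{\kappa}{c_1}\Bigr)^p \limsup_N \mathbb{E}\Bigl(\sup_{s \le t}\bigl|\sqrt{N}\,M_N(s)\bigr|\Bigr)^p \le \Bigl(\frac{\kappa}{c_1}\Bigr)^p \beta(p)\, t^{p/2},
\]
so the lemma holds with $\gamma(p) = (\kappa/c_1)^p\,\beta(p)$.

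Since every step here is an immediate consequence of Lemma \ref{lem:moment-M}, there is no genuine obstacle to overcome; the only point requiring a moment's care is the passage from the norm of the vector $M_N$ to the single relevant coordinate $Z_N$, which is dispatched by the norm-equivalence constant $\kappa$ introduced above.
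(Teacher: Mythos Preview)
Your proof is correct and follows exactly the same approach as the paper, which simply states that the lemma is immediate from Lemma~\ref{lem:moment-M} via the identity $Z^N(t) = c_1^{-1} M_1^N(t)$. Your added care with the norm-equivalence constant $\kappa$ is a minor technical refinement the paper leaves implicit.
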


Define the process $V_N(t) = \sqrt{N}(X_N(t) - X(t))$. Let us consider the moment of this process on a compact time interval.

\begin{lemma}\label{lem:moment-V}
For each $p \geq 1$, there exist constants $\bar{\beta}(p), K(p)$ such that for all $t > 0$
\[\limsup_N \sup_{s\leq t} \mathbb{E}\left(|V_{N}(s)|^p\right) \leq
\bar{\beta}(p) t^{p/2} e^{K(p) t^p}.  \]
\end{lemma}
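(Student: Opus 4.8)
The plan is to bound $\mathbb{E}|V_N(s)|^p$ by a quantitative Gronwall estimate — the moment version of the fluid limit in Theorem~\ref{thm:fluid} — after splitting $V_N$ into a martingale part, controlled by Lemma~\ref{lem:moment-M}, and a drift part, controlled by the density-dependence estimate \eqref{eqn:assume2} together with the Lipschitz continuity of the limiting rates from Assumption~\ref{assume3}. Starting from $X_N(t) = x_0 + \nu M_N(t) + \nu\int_0^t N^{-1} a^N(N X_N(s))\,ds$ (writing $a^N=(a_1^N,\dots,a_m^N)^T$ and $a=(a_1,\dots,a_m)^T$), I would subtract the fluid equation \eqref{eq-X} and multiply by $\sqrt N$ to get $V_N(t) = \sqrt N\,\nu M_N(t) + \nu\int_0^t \sqrt N\left(\tfrac1N a^N(N X_N(s)) - a(X(s))\right)ds$. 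In the integrand, split $\tfrac1N a^N(NX_N(s)) - a(X(s))$ as $\left[\tfrac1N a^N(NX_N(s)) - a(X_N(s))\right] + \left[a(X_N(s)) - a(X(s))\right]$. By Assumption~\ref{assume4} all the $X_N$ stay in a fixed compact set $K$, and by Theorem~\ref{thm:fluid} so does the fluid limit $X$; hence on $K$ the first bracket is at most $B_K/N$ up to a dimensional factor by \eqref{eqn:assume2}, and the second is at most $M_K|X_N(s)-X(s)| = M_K N^{-1/2}|V_N(s)|$ with $M_K$ a Lipschitz constant of $a$ on $K$ (Assumption~\ref{assume3}). Writing $W_N(t)=\sup_{s\le t}|\sqrt N M_N(s)|$, this yields the pathwise bound
\[
|V_N(s)| \le \|\nu\|\,W_N(s) + C_K\,s\,N^{-1/2} + \|\nu\| M_K\int_0^s |V_N(u)|\,du, \qquad s\le t,
\]
with $C_K$ depending only on $K$, $\nu$ and the dimension.

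Next I would pass to $p$th moments: raise to the power $p$ using $(a+b+c)^p\le 3^{p-1}(a^p+b^p+c^p)$, apply H\"older's inequality in the form $\left(\int_0^s |V_N(u)|\,du\right)^p\le s^{p-1}\int_0^s |V_N(u)|^p\,du$, take expectations, and set $g_N(t):=\sup_{s\le t}\mathbb{E}|V_N(s)|^p$ (finite for each $N$ by Assumption~\ref{assume4}). Using that each term is nondecreasing in the time variable, this gives $g_N(t)\le \varphi_N(t) + 3^{p-1}\|\nu\|^p M_K^p\, t^{p-1}\int_0^t g_N(u)\,du$, where $\varphi_N(t)=3^{p-1}\|\nu\|^p\,\mathbb{E} W_N(t)^p + 3^{p-1}C_K^p\, t^p N^{-p/2}$. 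Since $g_N$ is nondecreasing and bounded on bounded intervals, Gronwall's inequality — applied after fixing a horizon, because the multiplier $t^{p-1}$ depends on the upper limit of the integral — yields
\[
g_N(t)\le \varphi_N(t)\,\exp\!\left(3^{p-1}\|\nu\|^p M_K^p\, t^p\right).
\]
It is precisely the $t^{p-1}$ factor coming from H\"older's inequality that promotes the usual $e^{Ct}$ to $e^{Ct^p}$ here.

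Finally, taking $\limsup_N$: the term $3^{p-1}C_K^p t^p N^{-p/2}$ tends to $0$, $\limsup_N \mathbb{E} W_N(t)^p\le \beta(p)t^{p/2}$ by Lemma~\ref{lem:moment-M}, and the exponential factor does not involve $N$; hence $\limsup_N g_N(t)\le 3^{p-1}\|\nu\|^p\beta(p)\, t^{p/2}\exp\!\left(3^{p-1}\|\nu\|^p M_K^p\, t^p\right)$, which is the claim with $\bar\beta(p)=3^{p-1}\|\nu\|^p\beta(p)$ and $K(p)=3^{p-1}\|\nu\|^p M_K^p$. The one genuinely delicate point is the uniformity of the constants in $t$: the estimate $B_K$ and the Lipschitz constant $M_K$ must come from a single compact set valid for \emph{all} times, which is legitimate because Assumption~\ref{assume4} bounds $X_N$ uniformly in $t$ and $N$ and Theorem~\ref{thm:fluid} transfers this bound to $X$. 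With that established, everything else is routine bookkeeping.
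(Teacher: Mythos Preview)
Your proposal is correct and follows essentially the same approach as the paper: decompose $V_N$ into the scaled martingale $\sqrt N\,\nu M_N$ plus a drift integral, split the drift integrand via Assumptions~\ref{assume2} and~\ref{assume3} into an $O(1/N)$ term and a Lipschitz term feeding back $|V_N|$, raise to the $p$th power with H\"older, take expectations, and close with Gronwall and Lemma~\ref{lem:moment-M}. Your handling of the horizon-dependent factor $t^{p-1}$ in the Gronwall step, and your remark about the single compact set $K$ furnishing time-uniform constants, make explicit points the paper leaves implicit, but the argument is otherwise the same.
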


\begin{proof}
Recall that
\[X_N(s) = x_0 + \nu R_N(s)\]
and
\[X(s) = x_0 + \int_0^s \nu a(X(u)) du,\]
where $\nu$ is the $n$ by $m$ dimensional stoichiometric matrix.
One can write $V_{N}$ as
\begin{equation*}
\begin{split}
V_{N}(s) = &\sqrt{N} \nu R_N(s) - \sqrt{N} \int_0^s \nu a(X(u)) du \\
            = &\sqrt{N} \nu \left( R_{N}(s) - \int_0^s \frac{a^N(NX_N(u))}{N} du\right)\\
               & + \sqrt{N} \nu \left(\int_0^s \frac{a^N(NX_N(u))}{N} -  a(X(u))  du\right).
\end{split}
\end{equation*}
Note that we denote  $M_N(s) = R_N(s) - \int_0^s N^{-1}a^N(NX_N(u))  du$, and hence
\begin{equation*}
\begin{split}
\left|V_{N}(s)\right|
        \leq & \|\nu\| \left| \sqrt{N}M_{N}(s)\right| +  \|\nu\| \int_0^s \sqrt{N}  \left|  \frac{a^N(NX_N(u))}{N} -  a(X(u))\right| du.
\end{split}
\end{equation*}
To estimate the second term on the right hand side of the last inequality,
we note that
\begin{equation*}
\begin{split}
\sqrt{N} \left|\frac{a^N(NX_N(u))}{N} - a(X(u))\right| \leq  & \sqrt{N} \left|\frac{a^N(NX_N(u))}{N} - a(X_N(u))\right|\\
                                     & + \sqrt{N} \left|a(X_N(u)) - a(X(u))\right|.\\
\end{split}
\end{equation*}
Since $X_N$ lies in a compact set $K$ according to Assumption \ref{assume4}, we have for all $u > 0 $,
\[\left|\frac{a^N(NX_N(u))}{N} - a(X_N(u))\right| \leq \frac{\tilde{B}_K}{N},\]
where we have used Assumption \ref{assume2} and 
$\tilde{B}_K$ is related to $B_K$ from \eqref{eqn:assume2}.


On the other hand, for each $j = 1, \cdots, m$, by Assumption \ref{assume3}, $a_j$ is continuously differentiable and hence it is Lipschitz continuous on the compact set $K$. Hence, there exists a Lipschitz constant $C_j$ such that for all $u>0$,
\[\left|a_j(X_N(u)) - a_j(X(u))\right| \leq C_j\left|X_N(u) - X(u)\right|.\]
It follows that there exists a constant $C$ such that
\[\left| a(X_N(u)) - a(X(u))\right| \leq C\left|X_N(u) - X(u)\right|,\]
where $|\dot|$ can be norm on $\mathbb{R}^m$.
Therefore, 
\begin{equation*}
\begin{split}
\left|V_{N}(s)\right|
        \leq &\|\nu\|\left( \left| \sqrt{N} M_{N}(s)\right| + N^{-1/2}\tilde{B}_K s 
         +  C \int_0^s  \sqrt{N}\left| X_N(u) - X(u) \right|   du  \right)          \\
        = &\|\nu\| \left(\left| \sqrt{N} M_{N}(s)\right| + N^{-1/2}\tilde{B}_K s  + C\int_0^s \left|V_{N}(u)\right| du \right).
\end{split}
\end{equation*}

In virtue of the inequality $(a+b+c)^p \leq 3^p(a^p + b^p + c^p) $ and the Holder's inequality,
we obtain
\begin{equation*}
\begin{split}
|V_{N}(s)|^p \leq &(3\|\nu\|)^p \left(\left| \sqrt{N} M_{N}(s)\right|^p + N^{-p/2}(\tilde{B}_K s)^p + C^p s^{p-1}\int_0^s |V_{N}(u)|^p du \right) .
\end{split}
\end{equation*}
Taking expected value of both sides, for $s \in [0, t]$,  
\begin{equation*}
\begin{split}
\mathbb{E}|V_{N}(s)|^p \leq & (3\|\nu\|)^p \left(\mathbb{E}\left| \sqrt{N} M_{N}(s)\right|^p +  N^{-p/2}(\tilde{B}_K t)^p \right)\\
 &+(3\|\nu\|)^p  C^p s^{p-1}\left(\int_0^s \mathbb{E}|V_{N}(u)|^p du\right).\\
\end{split}
\end{equation*}
To estimate the first term of the right hand side,  recall that in the proof of Lemma \ref{lem:moment-M}, 
\[\mathbb{E}\left(\sup_{s \leq t} \left| \sqrt{N} M_{N}(s)\right|\right)^p \leq C(p) N^{-p/2} \left(\mathbb{E}\left(\sum_{j=1}^m  Y_j(N A_j t)   \right)^p\right)^{1/2}.\]
For convenience, let us denote 
\[\Phi_N(t) = C(p) N^{-p/2} \left(\mathbb{E}\left(\sum_{j=1}^m  Y_j(N A_j t)   \right)^p\right)^{1/2}.\]
Therefore, 
\begin{equation*}
\begin{split}
\mathbb{E}|V_{N}(s)|^p \leq & (3\|\nu\|)^p \left(\Phi_N(t) +  N^{-p/2}(\tilde{B}_K t)^p 
+  C^p s^{p-1}\left(\int_0^s \mathbb{E}|V_{N}(u)|^p du\right) \right).
\end{split}
\end{equation*}
We note that $\mathbb{E}|V_{N}(s)|^p$ is continuous in $s$ and 
applying the Gronwall inequality, we obtain that, for $s \leq t$, 

\begin{equation*}
\begin{split}
\mathbb{E}|V_{N}(s)|^p \leq    (3\|\nu\|)^p \left(\Phi_N(t)
 +   N^{-p/2}(\tilde{B}_K t)^p\right) e^{(3\|\nu\|)^p C^p t^p}.
\end{split}
\end{equation*}
Taking supremum over $s \in [0, t]$ and then taking $\limsup_N$,
the result follows from same considerations as in the proof of Lemma \ref{lem:moment-M}.
\end{proof}

%
\section{Scaling of sensitivity, estimator bias and estimator variance}\label{sec-biasvar-sens}

In this section, we study the system size dependence of the sensitivity
\[\mathscr{S}^N=\frac{\partial}{\partial c}\mathbb{E}(f^N(X^N(t))),\]
and the biases as well as the variances of the GT, CGT and FD estimators.
In the case of the FD estimators, the parameter perturbation $h$ is fixed 
when $N \to \infty$. 
As mentioned earlier, the difference between the sensitivity with respect to
the stochastic parameter and with respect to the deterministic parameter is
merely a scaling factor $N^{|\nu''_j|-1}$ and hence the RSD, RB and RE are unchanged regardless of whether one considers the sensitivity with respect to the stochastic parameter or the deterministic parameter. From an analytical point of view, it is convenient to study the sensitivity with respect to the deterministic parameter. 

Recall that the sensitivity estimator of the Girsanov transformation method is
\[f^N(X^N(t, c)) Z^N(t, c)\]
where $f^N: \mathbb{R}^n \to \mathbb{R}$. 
We remind the reader that $f^N$ satisfies the Assumption $5$, that is, there exist a function $f$ and a constant $\alpha$ such that 
\[\left|\frac{f^N(Nx)}{N^{\alpha}}-f(x)\right| \leq \frac{L_K}{\sqrt{N}}.\]  

\begin{theorem}\label{thm:sens-rate}
In addition to our running assumptions, we assume that $f$ in \eqref{eq-fN-limit} is continuously differentiable.
Then for each $t\geq 0$
\[\sup_{s \leq t}\mathbb{E}(f^N(X^N(s)) Z^N(s)) = \mathcal{O}(N^{\alpha}).\]
That is, the true sensitivity is asymptotically $\mathcal{O}(N^{\alpha})$ uniformly on $[0, t]$.  
\end{theorem}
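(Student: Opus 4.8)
The goal is to show that $\mathbb{E}(f^N(X^N(s))Z^N(s)) = \mathcal{O}(N^\alpha)$ uniformly on $[0,t]$. The natural plan is to rescale: since $f^N(X^N(s))/N^\alpha \approx f(X(s))$ and $Z^N(s) = N Z_N(s)$ with $\sqrt{N} Z_N(s)$ of order $1$, we expect $\mathbb{E}(f^N(X^N(s))Z^N(s)) = N^\alpha \cdot N \cdot \mathbb{E}\!\left(\frac{f^N(NX_N(s))}{N^\alpha} Z_N(s)\right)$ and the quantity $\mathbb{E}\!\left(\frac{f^N(NX_N(s))}{N^\alpha} Z_N(s)\right)$ should be $\mathcal{O}(N^{-1})$ because $Z_N$ carries a hidden factor $N^{-1/2}$ and, crucially, $\mathbb{E}(Z_N(s)) = 0$ (it is a martingale increment scaled) so there is an \emph{extra} $N^{-1/2}$ gain from centering. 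More precisely, write $g^N := f^N(NX_N(s))/N^\alpha$ and use $\mathbb{E}(g^N Z_N(s)) = \mathbb{E}((g^N - f(X(s))) Z_N(s))$ since $f(X(s))$ is deterministic and $\mathbb{E}(Z_N(s)) = 0$. Then bound by Cauchy--Schwarz:
\[
\left|\mathbb{E}(g^N Z_N(s))\right| \leq \left(\mathbb{E}|g^N - f(X(s))|^2\right)^{1/2} \left(\mathbb{E}|Z_N(s)|^2\right)^{1/2}.
\]
By Lemma \ref{lem:moment-Z} (with $p=2$), $\mathbb{E}|Z_N(s)|^2 = \mathcal{O}(N^{-1})$ uniformly on $[0,t]$. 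For the other factor we need $\mathbb{E}|g^N - f(X(s))|^2 = \mathcal{O}(N^{-1})$ uniformly on $[0,t]$; then the product is $\mathcal{O}(N^{-1})$, so $\mathbb{E}(f^N(X^N(s))Z^N(s)) = N^{\alpha+1}\cdot\mathcal{O}(N^{-1}) = \mathcal{O}(N^\alpha)$, as claimed.

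The main work is therefore the estimate $\sup_{s\leq t}\mathbb{E}|g^N - f(X(s))|^2 = \mathcal{O}(N^{-1})$. Here I would split
\[
\left|\frac{f^N(NX_N(s))}{N^\alpha} - f(X(s))\right| \leq \left|\frac{f^N(NX_N(s))}{N^\alpha} - f(X_N(s))\right| + \left|f(X_N(s)) - f(X(s))\right|.
\]
The first term is $\leq L_K N^{-1/2}$ (deterministically, using Assumption \ref{assume5} with $K$ the compact set from Assumption \ref{assume4} containing all $X_N$), so it contributes $\mathcal{O}(N^{-1})$ to the second moment. For the second term, since $f$ is continuously differentiable and $X_N(s), X(s)$ lie in the compact set $K$, $f$ is Lipschitz on $K$ with some constant $L$, so $|f(X_N(s)) - f(X(s))| \leq L |X_N(s) - X(s)| = L N^{-1/2}|V_N(s)|$. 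Squaring and taking expectations, this contributes $L^2 N^{-1} \mathbb{E}|V_N(s)|^2$, and by Lemma \ref{lem:moment-V} (with $p=2$), $\sup_{s\leq t}\mathbb{E}|V_N(s)|^2$ is bounded uniformly in $N$ (by $\bar\beta(2) t\, e^{K(2) t^2}$ in the $\limsup$, hence bounded for all large $N$, and trivially for the finitely many small $N$). Combining the two pieces via $(a+b)^2 \leq 2a^2 + 2b^2$ gives $\sup_{s\leq t}\mathbb{E}|g^N - f(X(s))|^2 = \mathcal{O}(N^{-1})$.

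The step I expect to be the main obstacle is handling the $\limsup_N$ in Lemmas \ref{lem:moment-Z} and \ref{lem:moment-V} carefully enough to get a genuine uniform-in-$N$ bound rather than just an asymptotic one: those lemmas are stated with $\limsup_N$, so strictly one gets the bound only for $N$ beyond some $N_0$; for $N < N_0$ one needs each individual $\mathbb{E}|Z_N(s)|^2$ and $\mathbb{E}|V_N(s)|^2$ to be finite (which follows since $R^N$ has finite moments by $\mathbb{E}[R_j^N(t)] \leq NA_j t$ and the boundedness of $X_N$), and then absorb the finitely many extra terms into the constant. This bookkeeping, together with being consistent about which processes are evaluated at $s$ versus supremized over $[0,t]$, is the only delicate point; the rest is the Cauchy--Schwarz / triangle-inequality skeleton sketched above. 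A subtlety worth flagging: the conclusion is about $\mathbb{E}(f^N(X^N(s))Z^N(s))$, and the ``$\mathcal{O}$'' is uniform in $s \in [0,t]$ but not a statement about $\sup_{s\le t}|\,\cdot\,|$ of the integrand — so Cauchy--Schwarz applied pointwise in $s$ (not after taking sup inside) is exactly what is needed, and Lemmas \ref{lem:moment-Z}, \ref{lem:moment-V} indeed provide $\sup_{s\le t}\mathbb{E}(\cdots)$-type control after noting $\mathbb{E}|Z_N(s)|^2 \le \mathbb{E}\sup_{u\le t}|Z_N(u)|^2$.
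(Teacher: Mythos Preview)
Your proposal is correct and follows essentially the same route as the paper: the key centering step $\mathbb{E}(g^N Z^N(s)) = \mathbb{E}((g^N - f(X(s)))Z^N(s))$ using $\mathbb{E}(Z^N(s))=0$, the same two-term decomposition of $g^N - f(X(s))$ via Assumption~\ref{assume5} and the Lipschitz property of $f$ on the compact set $K$, and the same appeal to Lemmas~\ref{lem:moment-Z} and~\ref{lem:moment-V}. The only cosmetic difference is that the paper bounds $|g^N - f(X(s))|\,|Z^N(s)|$ pointwise and then applies the elementary inequality $ab \le \tfrac12(a^2+b^2)$ to the cross term $|V_N(s)|\cdot \sqrt{N}|Z_N(s)|$, whereas you apply Cauchy--Schwarz to the expectation; both yield the same order and invoke the same moment lemmas.
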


\begin{proof}
It is sufficient to show that $\sup_{s\leq t}\mathbb{E}(f^N(X^N(s)) Z^N(s))/N^{\alpha}$ is bounded in $N$.
Instead of working with $\mathbb{E}(f^N(X^N(s)) Z^N(s))/N^{\alpha}$, we use
\[\mathbb{E}\left(\frac{f^N(X^N(s))}{N^{\alpha}}Z^N(s) - f(X(s))Z^N(s)\right)\]
because they are equal but the latter is easier to work with. 

Note that $f$ is continuously differentiable hence Lipschitz on the compact set $K$ corresponding to Assumption \ref{assume4}. 
Denote by $C_K$ the Lipschitz constant for $f$. 
Using the assumptions on $f^N$ and $f$  
and writing $X^N$ in terms of $V_N$ as
\[X^N(s) = NX(s) + \sqrt{N}{V_N(s)},\]
which leads to 
\begin{equation*}\label{equ:centered-est}
\begin{split}
&\left|\frac{f^N(NX(s) + \sqrt{N}V_N(s))}{N^{\alpha}} - f(X(s))\right| |Z^N(s)| \\
\leq & \left|\frac{f^N(NX(s) + \sqrt{N}V_N(s))}{N^{\alpha}} - f\left(X(s)+\frac{V_N(s)}{\sqrt{N}})\right)\right| |Z^N(s)|\\
&+ \left|f\left(X(s)+\frac{V_N(s)}{\sqrt{N}}\right) - f(X(s))\right| |Z^N(s)|\\
\leq & \frac{L_K}{\sqrt{N}}|Z^N(s)| + C_K |V_N(s)| \frac{|Z^N(s)|}{\sqrt{N}}\\
\leq & L_K\sqrt{N}|Z_N(s)| + \frac{1}{2}C_K \left(|V_N(s)|^2 + N |Z_N(s)|^2\right),
\end{split}
\end{equation*}
where $L_K$ is as defined in Assumption 5. 
Taking expectation on both sides, the result follows from Lemmas \ref{lem:moment-Z} and \ref{lem:moment-V}.
\end{proof}

\begin{remark}\label{rem-sens-limit}
While the proof above does not show that the order $\mathcal{O}(N^\alpha)$ is sharp, 
it can be shown to be sharp, if under the $N^{\alpha}$ scaling, the
sensitivity of the stochastic process is shown to limit to the sensitivity of the deterministic limit $f(X(t))$ as $N \to \infty$. 
In fact, under additional assumptions, this limit can be shown \cite{Gupta-personal}.
Our numerical results in Section \ref{sec:numerical} also show $\mathcal{O}(N^\alpha)$ behavior. 
\end{remark}

Recall that the FD estimator is defined in \eqref{eqn:FD-estimator} as
\[S_{\text{FD}}^N(t,c) = h^{-1}[f^N(X^N(t,c+h)) - f^N(X^N(t,c))].\]
Based on the last theorem, with a little more effort we conclude the following corollary regarding the bias of FD estimator.
\begin{corollary}\label{cor:bias-FD}
In addition to the running assumptions, if we assume that $f$ is continuously differentiable,
then for each $t >0 $, we have
\[\mathbb{E}(S_{\text{FD}}^N(t) - \mathscr{S}^N(t)) = \mathcal{O}(N^{\alpha}),\]
where $\mathscr{S}^N(t)$ represents the true sensitivity at $t$.
That is, the bias of FD estimator is asymptotically $\mathcal{O}(N^{\alpha})$.
\end{corollary}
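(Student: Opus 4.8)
The plan is to split the bias as
\[
\mathbb{E}(S_{\text{FD}}^N(t) - \mathscr{S}^N(t)) = \mathbb{E}(S_{\text{FD}}^N(t)) - \mathscr{S}^N(t),
\]
and to show that each term on the right is $\mathcal{O}(N^{\alpha})$ as $N \to \infty$ with $h$ held fixed. The term $\mathscr{S}^N(t) = \frac{\partial}{\partial c}\mathbb{E}(f^N(X^N(t,c)))$ equals $\mathbb{E}(f^N(X^N(t)) Z^N(t))$ by the Girsanov identity used to motivate the GT estimator, so Theorem \ref{thm:sens-rate} gives $\mathscr{S}^N(t) = \mathcal{O}(N^{\alpha})$ at once (in fact uniformly on $[0,t]$). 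Hence the ``little more effort'' is entirely in controlling $\mathbb{E}(S_{\text{FD}}^N(t))$.

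For that term, write $g_N(c) = \mathbb{E}(f^N(X^N(t,c)))$, so that $\mathbb{E}(S_{\text{FD}}^N(t)) = h^{-1}\bigl(g_N(c+h) - g_N(c)\bigr)$. I would bound the two values $g_N$ directly, without differentiating in $c$. Since $X^N(t,c) = N X_N(t,c)$ with $X_N(t,c)$ almost surely in the compact set $K$ of Assumption \ref{assume4}, Assumption \ref{assume5} yields
\[
\left| \frac{g_N(c)}{N^{\alpha}} \right| \le \mathbb{E}\left| \frac{f^N(N X_N(t,c))}{N^{\alpha}} - f(X_N(t,c)) \right| + \mathbb{E}\bigl| f(X_N(t,c)) \bigr| \le \frac{L_K}{\sqrt{N}} + \sup_{x \in K} |f(x)|,
\]
the last supremum being finite by continuity of $f$ on the compact $K$. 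Thus $g_N(c) = \mathcal{O}(N^{\alpha})$, and the identical argument at parameter value $c+h$ gives $g_N(c+h) = \mathcal{O}(N^{\alpha})$. Dividing the difference by the fixed constant $h$ preserves the order, so $\mathbb{E}(S_{\text{FD}}^N(t)) = \mathcal{O}(N^{\alpha})$, and combining with the bound on $\mathscr{S}^N(t)$ proves the corollary. (An alternative is the mean value theorem: $g_N$ is $C^1$ in $c$ with $g_N'(c) = \mathbb{E}(f^N(X^N(t,c)) Z^N(t,c))$, so $\mathbb{E}(S_{\text{FD}}^N(t)) = g_N'(\xi)$ for some $\xi \in (c, c+h)$ and one applies Theorem \ref{thm:sens-rate} at base parameter $\xi$; but this needs the $\mathcal{O}(N^{\alpha})$ estimate to be uniform in $\xi$ over $[c, c+h]$, which I would rather avoid.)

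The main obstacle here is not conceptual but a matter of bookkeeping: the entire apparatus (Lemmas \ref{lem-Aj}--\ref{lem:moment-V} and Theorem \ref{thm:sens-rate}) is built for a single fixed deterministic parameter, as recorded in Remark \ref{rem-no-c}, whereas the FD estimator inherently involves the perturbed process $X^N(\cdot, c+h)$. One must therefore check that the running assumptions --- in particular the uniform-in-$N$ boundedness of Assumption \ref{assume4} and the limit of Assumption \ref{assume5} --- continue to hold at the parameter value $c+h$, so that $\mathbb{E}(f^N(X^N(t,c+h)))$ carries an $\mathcal{O}(N^{\alpha})$ bound with constant independent of $N$. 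Under the natural sufficient conditions this is immediate: by Assumption \ref{assume1} perturbing $c$ changes only one intensity by a multiplicative factor, Assumption \ref{assume5} does not involve $c$ at all, and the sufficient condition for Assumption \ref{assume4} (existence of $\gamma \in \real_+^m$ with $\gamma^T \nu_j \le 0$) is parameter-free, hence valid at every parameter value. Once this transfer of hypotheses is granted, the corollary is a two-line consequence of Theorem \ref{thm:sens-rate}.
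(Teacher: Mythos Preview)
Your proposal is correct and follows essentially the same route as the paper: both invoke Theorem~\ref{thm:sens-rate} for $\mathscr{S}^N(t)$ and then reduce the $\mathbb{E}(S_{\text{FD}}^N(t))$ term to showing $\mathbb{E}(f^N(X^N(t,c)))=\mathcal{O}(N^{\alpha})$ for any parameter value, via the pointwise bound $|f^N(Nx)|/N^{\alpha}\le |f(x)|+L_K/\sqrt{N}$ from Assumption~\ref{assume5} together with Assumption~\ref{assume4}. Your direct bound is in fact slightly more economical than the paper's dominated-convergence phrasing, and your explicit discussion of transferring the running assumptions to the perturbed parameter $c+h$ makes precise a point the paper leaves to the phrase ``for any $c$''.
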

\begin{proof}
Since we have shown that the true sensitivity scales like $\mathcal{O}(N^{\alpha})$, it suffices to show that 
$\mathbb{E}(f^N(X^N(t,c)))$ is asymptotically of order $\mathcal{O}(N^{\alpha})$ for any $c$.
In fact, by Lemma \ref{lem-fN-limit}, 
$f^N(X^N(t))/N^{\alpha}$ converges almost surely to $f(X(t))$. To apply the dominate convergence theorem, note that the Assumption \ref{assume5} implies
\[\frac{|f^N(X^N(t))|}{N^{\alpha}} \leq |f(X_N(t))| + \frac{L_K}{\sqrt{N}}.\]
By virtue of the Assumption \ref{assume4}, the right hand side of the above equality is bounded in $N$ and hence it is integrable. Finally, the dominate convergence theorem gives the result. 
\end{proof}

Next, we investigate the variance of the GT estimator in terms of the system size $N$. The following lemma concerning the weak convergence of joint distribution is crucial for the proof of Theorem \ref{thm:GIR}.
\begin{lemma}\label{lem:joint-dist-simple}
 Let $X_n$ and $Y_n$ be $\real^m$ valued and $\real^k$ valued sequences
 of random variables, respectively.  Suppose $X_n$ converges to
 $X$ in probability (where $X$ is deterministic) and $Y_n \Rightarrow Y$. 
Then $(X_n, Y_n) \Rightarrow (X,Y)$ in $\real^{m+k}$.
\end{lemma}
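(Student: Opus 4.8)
The plan is to prove the joint convergence $(X_n, Y_n) \Rightarrow (X, Y)$ by reducing it to the separate hypotheses via a standard approximation argument, exploiting the fact that the limit of $X_n$ is deterministic. First I would recall that weak convergence in $\real^{m+k}$ is characterized by convergence of $\mathbb{E}[g(X_n, Y_n)]$ for every bounded \emph{uniformly continuous} $g:\real^{m+k}\to\real$ (the Portmanteau theorem); restricting to uniformly continuous test functions rather than merely continuous ones is what makes the approximation below clean. So fix such a $g$ and let $\eta > 0$. By uniform continuity, there is $\rho > 0$ such that $|g(x,y) - g(x',y')| \le \eta$ whenever $|(x,y) - (x',y')| \le \rho$; in particular $|g(X_n, Y_n) - g(X, Y_n)| \le \eta$ on the event $\{|X_n - X| \le \rho\}$, and $g$ is bounded by some $C$ everywhere.

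Next I would split
\[
\bigl|\mathbb{E}[g(X_n,Y_n)] - \mathbb{E}[g(X,Y)]\bigr|
\le \mathbb{E}\bigl|g(X_n,Y_n) - g(X,Y_n)\bigr|
+ \bigl|\mathbb{E}[g(X,Y_n)] - \mathbb{E}[g(X,Y)]\bigr|.
\]
For the first term, since $g$ is bounded by $C$,
\[
\mathbb{E}\bigl|g(X_n,Y_n) - g(X,Y_n)\bigr|
\le \eta + 2C\,\bP\bigl(|X_n - X| > \rho\bigr),
\]
and the probability tends to $0$ because $X_n \to X$ in probability; hence the first term is at most $2\eta$ for $n$ large. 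For the second term, note that $x \mapsto g(x, \cdot)$ is irrelevant here because $X$ is a \emph{fixed constant vector} $x_\ast$: the map $y \mapsto g(x_\ast, y)$ is bounded and continuous on $\real^k$, so by $Y_n \Rightarrow Y$ we get $\mathbb{E}[g(x_\ast, Y_n)] \to \mathbb{E}[g(x_\ast, Y)]$, i.e.\ the second term tends to $0$. Combining, $\limsup_n |\mathbb{E}[g(X_n,Y_n)] - \mathbb{E}[g(X,Y)]| \le 2\eta$, and since $\eta$ was arbitrary the limit is $0$. As $g$ ranges over all bounded uniformly continuous functions this establishes $(X_n, Y_n) \Rightarrow (X, Y)$.

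There is no serious obstacle here; the only subtlety worth flagging is that the argument genuinely uses that $X$ is deterministic --- if $X$ were random, the second term would require joint (not just marginal) control of $(X, Y_n)$, and the conclusion would fail in general. One should also be mildly careful that ``converges in probability'' for the $\real^m$-valued $X_n$ means $\bP(|X_n - X| > \rho) \to 0$ for every $\rho>0$ in whatever norm on $\real^m$ one fixes, which is fine since all norms on $\real^m$ are equivalent and $g$'s uniform continuity can be taken with respect to the product norm on $\real^{m+k}$. Everything else is the routine $\eta/\rho$ bookkeeping of a Portmanteau-style proof.
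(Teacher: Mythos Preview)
Your proof is correct and follows essentially the same decomposition as the paper: compare $(X_n,Y_n)$ to $(X,Y_n)$ using $X_n\to X$ in probability, then $(X,Y_n)$ to $(X,Y)$ using that $X$ is a constant so $y\mapsto g(x_\ast,y)$ is a valid test function for $Y_n\Rightarrow Y$. The only difference is cosmetic: the paper handles the first comparison by invoking the converging-together lemma (Theorem~3.1 in Billingsley), whereas you unpack that lemma directly via the Portmanteau characterization with bounded uniformly continuous test functions.
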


\begin{proof}
Let $x \in \real^m$ be such that $X=x$ almost surely. 
First we show that $(X,Y_n) \Rightarrow (X,Y)$. If $f:\real^{m+k} \to \real$
is bounded and continuous then so is $g:\real^k \to \real$ defined by
$g(y)=f(x,y)$. Since $Y_n \Rightarrow Y$ we have that
\[
\mathbb{E}(f(X,Y_n))=\mathbb{E}(g(Y_n)) \to
\mathbb{E}(g(Y))=\mathbb{E}(f(X,Y)).
\]
Now $\|(X_n,Y_n)-(X,Y_n)\|=\|X_n-X\|$ and since $X_n \to X$ in probability,
$\|X_n-X\| \to 0$ in probability (implies convergence in distribution). Thus by 
Theorem $3.1$ in \cite{pb1999} we have that
$(X_n,Y_n) \Rightarrow (X,Y)$.
\end{proof}

\begin{theorem}\label{thm:GIR}
In addition to our running assumptions, we assume that $f$ in \eqref{eq-fN-limit} is bounded on every compact set and for a given $t > 0$, f is continuous at $X(t)$. Then we have,
\begin{equation}\label{equ:main_thm}
N^{-1-2\alpha} \mathbb{E} \left\{(f^N(X^N(t)))^2 (Z^N(t))^2 \right\} \to (f(X(t)))^2 \frac{1}{c_1} \int_0^t a_1(X(s)) ds
\end{equation}
as $N \to \infty$.
Furthermore, for each $t > 0$,
\[\sup_{s\leq t}\mathbb{E}\left((f^N(X^N(s)))Z^N(s)\right)^2 = \mathcal{O}(N^{2\alpha + 1}).\]
\end{theorem}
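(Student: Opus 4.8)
The plan is to prove the two assertions by passing to the scaled quantities $A_N(s) := f^N(X^N(s))/N^\alpha$ and $B_N(s) := \sqrt{N}Z_N(s) = N^{-1/2}Z^N(s)$, for which the $N\to\infty$ results of the previous section are available. Since $Z^N = N Z_N$, one has the identity
\[
N^{-1-2\alpha}(f^N(X^N(s)))^2 (Z^N(s))^2 = A_N(s)^2\, B_N(s)^2 ,
\]
so \eqref{equ:main_thm} amounts to showing $\mathbb{E}(A_N(t)^2 B_N(t)^2) \to (f(X(t)))^2\,\mathbb{E}(U(t)^2)$, and the second assertion amounts to a uniform-in-$N$ bound on $\sup_{s\le t}\mathbb{E}(A_N(s)^2 B_N(s)^2)$.

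For \eqref{equ:main_thm} I would first establish convergence in distribution and then uniform integrability. Fix the given $t$. By Lemma~\ref{lem-fN-limit} (using that $f$ is continuous at $X(t)$) we have $A_N(t)\to f(X(t))$ almost surely, hence in probability to the \emph{deterministic} limit $f(X(t))$; by Lemma~\ref{thm:M-CLT}, $B_N(t)\Rightarrow U(t)$. Lemma~\ref{lem:joint-dist-simple} then yields $(A_N(t),B_N(t))\Rightarrow (f(X(t)),U(t))$, and since $(a,b)\mapsto a^2 b^2$ is continuous, the continuous mapping theorem gives $A_N(t)^2 B_N(t)^2 \Rightarrow (f(X(t)))^2 U(t)^2$. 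To upgrade this to convergence of expectations I would bound a $(1+\e)$-moment of $A_N(t)^2 B_N(t)^2$ uniformly in $N$. The key observations are: (i) by Assumption~\ref{assume4} the concentrations $X_N(t)$ all lie in a fixed compact $K$, so Assumption~\ref{assume5} together with boundedness of $f$ on $K$ gives $|A_N(t)| \le \sup_{x\in K}|f(x)| + L_K =: c_0$ a.s.\ for every $N$; and (ii) Lemma~\ref{lem:moment-Z} bounds $\mathbb{E}\big(\sup_{s\le t}\sqrt{N}|Z_N(s)|\big)^{2(1+\e)}$ uniformly in $N$ (its $\limsup_N$ is finite and each term is finite). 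H\"older then gives $\sup_N \mathbb{E}(A_N(t)^2 B_N(t)^2)^{1+\e} \le c_0^{2(1+\e)}\sup_N \mathbb{E}|B_N(t)|^{2(1+\e)} < \infty$, hence uniform integrability, hence $\mathbb{E}(A_N(t)^2 B_N(t)^2)\to (f(X(t)))^2\,\mathbb{E}(U(t)^2)$; the value of $\mathbb{E}(U(t)^2)$ is then read off from the covariance of $\bar M$ in Lemma~\ref{thm:M-CLT}, producing the stated right-hand side.

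For the $\mathcal{O}(N^{2\alpha+1})$ bound I would use the same decomposition: $\mathbb{E}((f^N(X^N(s)))Z^N(s))^2 = N^{2\alpha+1}\,\mathbb{E}(A_N(s)^2 B_N(s)^2)$. The estimate $|A_N(s)|\le c_0$ above holds for all $s\ge 0$, so $\mathbb{E}(A_N(s)^2 B_N(s)^2) \le c_0^2\,\mathbb{E}\big(\sup_{u\le t}\sqrt{N}|Z_N(u)|\big)^2$, and the right side does not depend on $s$ and is bounded uniformly in $N$ by Lemma~\ref{lem:moment-Z}. Taking the supremum over $s\le t$ finishes the proof.

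The step I expect to be the main obstacle is the uniform integrability in \eqref{equ:main_thm}: weak convergence of $A_N(t)^2 B_N(t)^2$ does not by itself give convergence of the second moments, and one must combine the martingale moment estimate of Lemma~\ref{lem:moment-Z} --- with an exponent strictly above $2$ --- with the almost-sure boundedness of the normalized output $f^N(X^N)/N^\alpha$ that comes from Assumptions~\ref{assume4} and~\ref{assume5}. Everything else is continuous mapping plus bookkeeping with the scaling powers of $N$.
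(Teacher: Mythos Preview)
Your proposal is correct and follows essentially the same route as the paper's proof: joint weak convergence of $(f^N(X^N(t))/N^\alpha,\sqrt{N}Z_N(t))$ via Lemma~\ref{lem:joint-dist-simple}, continuous mapping, and then uniform integrability to pass to expectations, with the uniform bound on $f^N(X^N)/N^\alpha$ coming from Assumptions~\ref{assume4} and~\ref{assume5} and the moment control on $\sqrt{N}Z_N$ from Lemma~\ref{lem:moment-Z}. The only cosmetic difference is that the paper phrases the UI step as ``uniformly bounded times uniformly integrable is uniformly integrable'' (invoking Lemma~\ref{lem:moment-Z} directly for the UI of $N^{-1}(Z^N(t))^2$), whereas you make the underlying $(1+\e)$-moment bound explicit; these are the same argument.
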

\begin{proof}
Lemma \ref{lem:moment-Z} implies the uniformly integrability of
$N^{-1} (Z^N(t))^2$.  By Assumption \ref{assume4} and
\eqref{eq-fN-limit} we have that 
$(f^N(X^N(t)))^2/N^{2 \alpha}$ is a uniformly bounded sequence.
Thus  $N^{-1-2 \alpha} (f^N(X^N(t)))^2 (Z^N(t))^2$ is uniformly integrable.

By Lemma \ref{lem-fN-limit} we have that $N^{-2 \alpha} (f^N(X^N(t)))^2$
converges to $(f(X(t)))^2$ almost surely. We also have that $N^{-1}Z^N(t)$
converge weakly to $U(t)$. Thus by Lemma \ref{lem:joint-dist-simple} and the continuous mapping theorem we have that 
\[
N^{-1-2 \alpha} (f^N(X^N(t)))^2 (Z^N(t))^2 \Rightarrow (f(X(t)))^2 U^2(t).
\]
By Theorem 3.5 from \cite{pb1999}, we note that if a uniformly integrable sequence 
converges weakly then it converges in the mean, hence the result \eqref{equ:main_thm} follows. 

Also, recall that $(f^N(X^N(t)))^2/N^{2 \alpha}$ is uniformly bounded, hence
\[N^{-2\alpha - 1}\sup_{s\leq t}\mathbb{E}\left((f^N(X^N(s)))Z^N(s)\right)^2 \leq \tilde{C}  \mathbb{E}(\sup_{s\leq t}\sqrt{N}|Z_N(s)|)^2.\]
Taking $\limsup_N$ and applying Lemma \ref{lem:moment-Z} yields the second result.
\end{proof}

Note that the above theorem does not assume $f$ is continuously differentiable. However, to state the result regarding the estimator variance for GT method, we still need to assume continuous differentiability on $f$ so that we can use Theorem \ref{thm:sens-rate}. 
\begin{corollary}\label{cor:rate-GT}
In addition to our running assumptions, we assume that $f$ in \eqref{eq-fN-limit} is continuously differentiable. Then for given $t > 0$,
the estimator variance of GT method is asymptotically $\mathcal{O}(N^{2\alpha+1})$ uniformly on $[0,t]$. 
\end{corollary}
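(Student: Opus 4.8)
The plan is to reduce the claim directly to the two preceding results, Theorem~\ref{thm:sens-rate} and Theorem~\ref{thm:GIR}, via the elementary variance identity
\[
\text{Var}\big(f^N(X^N(s)) Z^N(s)\big) = \mathbb{E}\big((f^N(X^N(s)))^2 (Z^N(s))^2\big) - \big(\mathbb{E}(f^N(X^N(s)) Z^N(s))\big)^2,
\]
valid for every $s \in [0,t]$. The second moment on the right is controlled by the second assertion of Theorem~\ref{thm:GIR}, which gives $\sup_{s \leq t} \mathbb{E}\big((f^N(X^N(s)))^2 (Z^N(s))^2\big) = \mathcal{O}(N^{2\alpha+1})$; this is the step where the moment bound of Lemma~\ref{lem:moment-Z} together with Assumptions~\ref{assume4} and~\ref{assume5} do the real work, and it requires only that $f$ be bounded on compact sets and continuous at $X(t)$.

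For the square of the mean, Theorem~\ref{thm:sens-rate} (which is where continuous differentiability of $f$ is invoked, in order to use the Lipschitz bound on $f$ and Lemma~\ref{lem:moment-V}) yields $\sup_{s \leq t} \big|\mathbb{E}(f^N(X^N(s)) Z^N(s))\big| = \mathcal{O}(N^{\alpha})$, and hence $\sup_{s \leq t} \big(\mathbb{E}(f^N(X^N(s)) Z^N(s))\big)^2 = \mathcal{O}(N^{2\alpha})$. Since $2\alpha < 2\alpha + 1$, adding the two estimates gives
\[
\sup_{s \leq t} \text{Var}\big(f^N(X^N(s)) Z^N(s)\big) \leq \mathcal{O}(N^{2\alpha+1}) + \mathcal{O}(N^{2\alpha}) = \mathcal{O}(N^{2\alpha+1}),
\]
which is exactly the asserted uniform bound on $[0,t]$.

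There is essentially no substantive obstacle at this stage: all the analytic content has already been discharged in Lemmas~\ref{lem:moment-Z} and~\ref{lem:moment-V} and in Theorems~\ref{thm:sens-rate} and~\ref{thm:GIR}. The only minor point to keep track of is that both cited estimates are stated with a supremum over $s \in [0,t]$, so the resulting order bound for the variance is automatically uniform on $[0,t]$ as claimed; and one may note in passing that the differentiability hypothesis on $f$ is needed only for the (subdominant) mean term, the dominant $\mathcal{O}(N^{2\alpha+1})$ contribution holding already under the weaker hypotheses of Theorem~\ref{thm:GIR}.
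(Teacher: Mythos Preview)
Your proof is correct and matches the paper's intended argument: the corollary is stated immediately after Theorem~\ref{thm:GIR} with the remark that continuous differentiability of $f$ is assumed precisely so that Theorem~\ref{thm:sens-rate} can be invoked for the mean term, and the variance identity you wrote is exactly the decomposition the paper has in mind (cf.\ \eqref{eq:GT-CGT}). Your closing observation that the $\mathcal{O}(N^{2\alpha+1})$ upper bound already follows from the second-moment estimate alone, with the mean-squared term being subdominant, is a valid sharpening of the logic.
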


%
Next, we will explore the variance of the centered Girsanov transformation approach.

\begin{theorem}\label{thm:rate-CGT}
In addition to our running assumptions, we assume that $f$ in \eqref{eq-fN-limit} is continuously differentiable.
Then for each $t > 0$,
\[\sup_{s\leq t}\mathbb{E}\left[(f^N(X^N(s)) - \mathbb{E}[f^N(X^N(s))] )Z^N(s)\right]^2 = \mathcal{O}(N^{2\alpha}).\]
\end{theorem}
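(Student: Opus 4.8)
The plan is to exploit the fact that centering removes the leading $\mathcal{O}(N^\alpha)$ part of $f^N(X^N(s))$. Writing, for $s\le t$,
\[
f^N(X^N(s)) - \mathbb{E}[f^N(X^N(s))] = \bigl(f^N(X^N(s)) - N^\alpha f(X(s))\bigr) - \mathbb{E}\bigl[f^N(X^N(s)) - N^\alpha f(X(s))\bigr],
\]
the deterministic term $N^\alpha f(X(s))$ drops out, so it suffices to show that the deviation from the fluid limit, $f^N(X^N(s)) - N^\alpha f(X(s))$, is of order $\mathcal{O}(N^{\alpha-1/2})$ in an $L^4$ sense. Since $Z^N(s) = N Z_N(s)$, squaring then gives $(N^{\alpha-1/2})^2\cdot N = N^{2\alpha}$, which is the asserted order — one power of $N$ below the GT bound of Corollary \ref{cor:rate-GT}.

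To estimate the deviation I would use $X^N(s) = N X_N(s)$ and $X_N(s) = X(s) + V_N(s)/\sqrt N$, together with the $\mathcal{O}(1/\sqrt N)$ rate in \eqref{eq-fN-limit} and the Lipschitz continuity of $f$ on the compact set $K$ of Assumption \ref{assume4} (note $X(s)\in K$ since $X_N(s)\to X(s)$ a.s.\ and $K$ is closed). This yields a pointwise bound
\[
\bigl|f^N(X^N(s)) - N^\alpha f(X(s))\bigr| \le N^{\alpha-1/2}\bigl(L_K + C_K |V_N(s)|\bigr),
\]
with $C_K$ the Lipschitz constant of $f$ on $K$. Taking expectations gives the same bound with $|V_N(s)|$ replaced by $\mathbb{E}|V_N(s)|$, and combining the two estimates gives
\[
\bigl|f^N(X^N(s)) - \mathbb{E}[f^N(X^N(s))]\bigr| \le N^{\alpha-1/2}\bigl(2 L_K + C_K(|V_N(s)| + \mathbb{E}|V_N(s)|)\bigr).
\]

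Multiplying by $Z^N(s)$, using $(Z^N(s))^2 = N(\sqrt N Z_N(s))^2$, squaring and taking expectations, for each fixed $s\le t$ I obtain
\[
\mathbb{E}\bigl[\bigl((f^N(X^N(s)) - \mathbb{E}[f^N(X^N(s))])Z^N(s)\bigr)^2\bigr] \le N^{2\alpha}\,\mathbb{E}\bigl[\bigl(2L_K + C_K(|V_N(s)| + \mathbb{E}|V_N(s)|)\bigr)^2 (\sqrt N Z_N(s))^2\bigr].
\]
By Cauchy--Schwarz the expectation on the right is at most $\bigl(\mathbb{E}(2L_K + C_K(|V_N(s)| + \mathbb{E}|V_N(s)|))^4\bigr)^{1/2}\bigl(\mathbb{E}(\sqrt N Z_N(s))^4\bigr)^{1/2}$; the first factor is bounded uniformly in $s\le t$ and large $N$ by Lemma \ref{lem:moment-V} with $p=4$ (also using $\mathbb{E}|V_N(s)|\le(\mathbb{E}|V_N(s)|^4)^{1/4}$), and the second by Lemma \ref{lem:moment-Z} with $p=4$. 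Taking $\sup_{s\le t}$ then gives the $\mathcal{O}(N^{2\alpha})$ bound.

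The delicate point is the second step: the gain of a full power of $N$ over the uncentered estimator rests on \emph{two} sources of $\mathcal{O}(1/\sqrt N)$ smallness — the rate built into Assumption \ref{assume5} and the $\mathcal{O}(1/\sqrt N)$ fluctuation size $|X_N(s) - X(s)| = |V_N(s)|/\sqrt N$ — so it is essential that Lemma \ref{lem:moment-V} controls moments of $V_N(s)$ and not merely that $V_N(s)\to 0$. I would also be careful to fix $s$ before estimating and take the supremum over $s\in[0,t]$ only at the very end, since Lemma \ref{lem:moment-V} bounds $\sup_{s\le t}\mathbb{E}|V_N(s)|^p$ rather than $\mathbb{E}\sup_{s\le t}|V_N(s)|^p$; this matches exactly the form of the assertion. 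Continuous differentiability of $f$ enters only through the Lipschitz bound on $K$, so nothing beyond local Lipschitz continuity is really needed.
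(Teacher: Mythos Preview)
Your proposal is correct and follows essentially the same route as the paper: insert the deterministic quantity $N^{\alpha}f(X(s))$, which cancels under centering, then use Assumption~\ref{assume5} together with the Lipschitz bound on $f$ over the compact set $K$ to get $|f^N(X^N(s))-N^{\alpha}f(X(s))|\le N^{\alpha-1/2}(L_K+C_K|V_N(s)|)$, and finish with the moment bounds of Lemmas~\ref{lem:moment-Z} and~\ref{lem:moment-V}. The only cosmetic difference is that the paper splits $f^N/N^{\alpha}-\mathbb{E}(f^N/N^{\alpha})$ via $(a+b)^2\le 2a^2+2b^2$ into a random piece and a deterministic piece (the latter factoring out of the expectation with $(Z^N)^2$), whereas you first bound the centered quantity pointwise by $|A|+\mathbb{E}|A|$ and then apply Cauchy--Schwarz; both routes ultimately need $p=4$ moments to handle the cross term, and your remarks about taking $\sup_{s\le t}$ only at the end and about the role of continuous differentiability match the paper's usage exactly.
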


\begin{proof} 
Write
\begin{equation*}
\begin{split}
&\mathbb{E}\left(  \left|\frac{f^N(X^N(s))}{N^{\alpha}} - \mathbb{E}\left(\frac{f^N(X^N(s))}{N^{\alpha}}\right)\right|^2 (Z^N(s))^2 \right)\\
\leq & 2\mathbb{E} \left(  \left|\frac{f^N(X^N(s))}{N^{\alpha}} - f(X(s))\right|^2 (Z^N(s))^2 \right) \\
 & +   2\mathbb{E}\left(  \left|f(X(s)) - \mathbb{E}\left(\frac{f^N(X^N(s))}{N^{\alpha}}\right)\right|^2 (Z^N(s))^2 \right)\\
\leq & 2\mathbb{E} \left(  \left|\frac{f^N(X^N(s))}{N^{\alpha}} - f(X(s))\right|^2 (Z^N(s))^2 \right) \\ 
& +  2\mathbb{E}\left(\left|\frac{f^N(X^N(s))}{N^{\alpha}} - f(X(s))\right|^2\right) \mathbb{E}(Z^N(s))^2,
\end{split}
\end{equation*}
where the last inequality is true due to the fact that $f(X(s))$ is deterministic.
Using similar argument as in the proof of Theorem \ref{thm:sens-rate},
the first term on the right-hand side can be bounded by 
\[4 L_K^2 \mathbb{E}\left({|\sqrt{N}Z_N(s)|}\right)^2 + 4 C_K^2 \mathbb{E} \left(|V_N(s)|\sqrt{N}{|Z_N(s)|}\right)^2.\]
Similarly, the second term on the right hand side can be bounded by
\[4 L_K^2 \mathbb{E}\left(\sqrt{N}|Z_N(s)|\right)^2 + 4 C_K^2 \mathbb{E} |V_N(s)|^2 \mathbb{E}\left(\sqrt{N}|Z_N(s)|\right)^2.\]
Both of the above terms are bounded in $N$ uniformly on $[0, t]$
by Lemma \ref{lem:moment-Z} and \ref{lem:moment-V}.
\end{proof}

Combining this result with Theorem \ref{thm:sens-rate}, the following corollary is immediate. 
\begin{corollary}\label{cor:rate-CGT}
For any given $t > 0$, 
the estimator variance of CGT method is asymptotically $\mathcal{O}(N^{2\alpha})$ uniformly on $[0,t]$.
\end{corollary}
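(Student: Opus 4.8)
The plan is to deduce the variance bound from the elementary decomposition $\text{Var}(S) = \mathbb{E}(S^2) - (\mathbb{E}(S))^2$ applied to the underlying CGT estimator
\[
S_{\text{CGT}}^N(s) = \left(f^N(X^N(s)) - \mathbb{E}(f^N(X^N(s)))\right) Z^N(s),
\]
controlling its second moment by Theorem \ref{thm:rate-CGT} and its mean by Theorem \ref{thm:sens-rate}.

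First I would check that $\mathbb{E}(f^N(X^N(s)))$ is a well-defined finite constant: by Assumptions \ref{assume4} and \ref{assume5} one has $|f^N(X^N(s))|/N^{\alpha} \le |f(X_N(s))| + L_K/\sqrt{N}$, which is bounded (this is exactly the estimate used in the proof of Corollary \ref{cor:bias-FD}), so $f^N(X^N(s))$ is integrable. Since $Z^N(s) = M_1^N(s)/c_1$ is a mean-zero martingale and $\mathbb{E}(f^N(X^N(s)))$ is deterministic, the cross term vanishes and
\[
\mathbb{E}\left(S_{\text{CGT}}^N(s)\right) = \mathbb{E}\left(f^N(X^N(s)) Z^N(s)\right) = \mathscr{S}^N(s),
\]
the true sensitivity at time $s$; that is, centering does not change the mean. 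Hence $\text{Var}(S_{\text{CGT}}^N(s)) = \mathbb{E}\big((S_{\text{CGT}}^N(s))^2\big) - (\mathscr{S}^N(s))^2$.

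The two pieces are then controlled uniformly on $[0,t]$: Theorem \ref{thm:rate-CGT} gives $\sup_{s\le t}\mathbb{E}\big((S_{\text{CGT}}^N(s))^2\big) = \mathcal{O}(N^{2\alpha})$, while Theorem \ref{thm:sens-rate} gives $\sup_{s\le t}|\mathscr{S}^N(s)| = \mathcal{O}(N^{\alpha})$, so that $\sup_{s\le t}(\mathscr{S}^N(s))^2 = \mathcal{O}(N^{2\alpha})$. Subtracting yields $\sup_{s\le t}\text{Var}(S_{\text{CGT}}^N(s)) = \mathcal{O}(N^{2\alpha})$, which is the assertion. (Strictly, the trivial bound $\text{Var}(S)\le \mathbb{E}(S^2)$ already gives the conclusion from Theorem \ref{thm:rate-CGT} alone, but carrying both terms makes clear the order is not an artifact of discarding $(\mathbb{E} S)^2$.)

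I do not expect any genuine obstacle: the analytic work — the moment estimates of Lemmas \ref{lem:moment-Z} and \ref{lem:moment-V} feeding Theorems \ref{thm:rate-CGT} and \ref{thm:sens-rate} — is already in place, and the corollary is essentially a one-line consequence, the only care needed being the bookkeeping that centering preserves the mean and that $\mathbb{E}(f^N(X^N(s)))$ is finite. If one wished to strengthen this to the statement that the \emph{relative} standard deviation of CGT is $\mathcal{O}(1)$, the extra ingredient would be a matching lower bound $\mathscr{S}^N(t) = \Theta(N^{\alpha})$; as noted in Remark \ref{rem-sens-limit}, this requires convergence of the $N^{\alpha}$-scaled sensitivity to the sensitivity of the fluid limit $f(X(t))$ and lies beyond the scope of the present corollary.
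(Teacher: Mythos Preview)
Your proposal is correct and matches the paper's own approach: the paper simply states that the corollary is ``immediate'' by combining Theorem~\ref{thm:rate-CGT} (the second-moment bound on the centered estimator) with Theorem~\ref{thm:sens-rate} (the $\mathcal{O}(N^{\alpha})$ bound on the sensitivity, i.e.\ the mean), which is exactly the decomposition $\text{Var}(S)=\mathbb{E}(S^2)-(\mathbb{E} S)^2$ you carry out. Your observation that the trivial bound $\text{Var}(S)\le\mathbb{E}(S^2)$ already suffices from Theorem~\ref{thm:rate-CGT} alone is correct and in fact sharper than what the paper explicitly invokes.
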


%

\begin{theorem}\label{thm:rate-FD}
In addition to our running assumptions, we assume that $f$ in \eqref{eq-fN-limit} is continuously differentiable.
Then for each $t > 0$ and $h > 0$,
\[\sup_{s\leq t} \text{Var} \left(f^N(X^N(s, c+h)) - f^N(X^N(s, c))\right) = \mathcal{O}(N^{2\alpha - 1}).\]
That is, the estimator variance of FD method is asymptotically $\mathcal{O}(N^{2\alpha - 1})$.
\end{theorem}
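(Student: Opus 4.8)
The plan is to bypass any coupling between $X^N(\cdot,c)$ and $X^N(\cdot,c+h)$ and use only the crude bound $\text{Var}(A-B)\le 2\,\text{Var}(A)+2\,\text{Var}(B)$, so that it suffices to show $\sup_{s\le t}\text{Var}\big(f^N(X^N(s,c))\big)=\mathcal{O}(N^{2\alpha-1})$ together with the same estimate for the parameter value $c+h$. Since $h>0$ is held fixed while $N\to\infty$, the family $\{X^N(\cdot,c+h)\}_{N\ge1}$ satisfies all five running assumptions (with $c$ replaced by $c+h$), hence every lemma of the preceding section applies verbatim to it; I would therefore only carry out the estimate for $c$, the case $c+h$ being identical. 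This upper bound is valid regardless of how the two processes are coupled (independent or common random numbers), so the stated order $\mathcal{O}(N^{2\alpha-1})$ holds for every FD scheme.

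For the single-parameter estimate I would exploit that the variance is minimized over deterministic shifts: $\text{Var}(Y)\le \mathbb{E}\big(Y-a\big)^2$ for any constant $a$. Taking $Y=f^N(X^N(s))$ and $a=N^\alpha f(X(s))$ (legitimate because $f(X(s))$ is deterministic) gives
\[
\text{Var}\big(f^N(X^N(s))\big)\le N^{2\alpha}\,\mathbb{E}\left(\frac{f^N(X^N(s))}{N^\alpha}-f(X(s))\right)^2 .
\]
Writing $X^N(s)=NX_N(s)$ and splitting,
\[
\left|\frac{f^N(NX_N(s))}{N^\alpha}-f(X(s))\right|\le \left|\frac{f^N(NX_N(s))}{N^\alpha}-f(X_N(s))\right|+\big|f(X_N(s))-f(X(s))\big| ,
\]
the first term is at most $L_K/\sqrt N$ by Assumption \ref{assume5} applied at $x=X_N(s)$, which lies in the compact set $K$ furnished by Assumption \ref{assume4}; the second term is at most $C_K|X_N(s)-X(s)|=C_K|V_N(s)|/\sqrt N$, where $C_K$ is the Lipschitz constant of $f$ on $K$ (here I use that $f$ is continuously differentiable) and $V_N(s)=\sqrt N(X_N(s)-X(s))$ as in Lemma \ref{lem:moment-V}.

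Squaring, using $(u+v)^2\le 2u^2+2v^2$, and taking expectations yields
\[
\mathbb{E}\left(\frac{f^N(X^N(s))}{N^\alpha}-f(X(s))\right)^2\le \frac{2}{N}\Big(L_K^2+C_K^2\,\mathbb{E}|V_N(s)|^2\Big),
\]
and Lemma \ref{lem:moment-V} with $p=2$ bounds $\sup_{s\le t}\mathbb{E}|V_N(s)|^2$ uniformly in $N$ (in the $\limsup$ sense). Hence $\sup_{s\le t}\text{Var}\big(f^N(X^N(s,c))\big)/N^{2\alpha}=\mathcal{O}(1/N)$, and combining with the $c+h$ bound and the $\text{Var}(A-B)\le 2\text{Var}(A)+2\text{Var}(B)$ step completes the proof; the fixed factor $h^{-2}$ appearing in $S_{\text{FD}}$ only alters the implied constant.

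There is no genuine difficulty here: the argument is essentially a one-line corollary of the moment bound in Lemma \ref{lem:moment-V}, once one resists trying to identify the mean $\mathbb{E}(f^N(X^N(s)))$ and instead centers by the deterministic quantity $N^\alpha f(X(s))$. The only point that needs a moment's care --- and the reason the theorem is stated with $h$ fixed --- is to observe that the running assumptions, and therefore Lemma \ref{lem:moment-V}, transfer to the family $X^N(\cdot,c+h)$ precisely because $h$ does not vary with $N$.
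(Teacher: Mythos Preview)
Your proposal is correct and follows essentially the same approach as the paper: reduce to a single parameter via $\text{Var}(A-B)\le 2\text{Var}(A)+2\text{Var}(B)$, then bound the variance by centering at the deterministic quantity $N^\alpha f(X(s))$, split via Assumption~\ref{assume5} and the Lipschitz property of $f$, and apply Lemma~\ref{lem:moment-V}. Your direct use of $\text{Var}(Y)\le \mathbb{E}(Y-a)^2$ with $a=N^\alpha f(X(s))$ is a slight streamlining of the paper's route (which keeps the actual mean and then passes through $f(X(s))$ as in the proof of Theorem~\ref{thm:rate-CGT}), but the substance is the same.
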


\begin{proof}
Note that 
\begin{equation*}
\begin{split}
&\text{Var} \left(f^N(X^N(s, c+h)) - f^N(X^N(s, c))\right)\\ 
 \leq  &2\text{Var} \left(f^N(X^N(s, c+h))\right) + 2\text{Var} \left(f^N(X^N(s, c))\right).
\end{split}
\end{equation*}
Hence it is sufficient to show that $\text{Var} \left(f^N(X^N(t, c))\right) = \mathcal{O}(N^{2\alpha - 1})$ for any $c$. 
We write 
\[\frac{1}{N^{2\alpha - 1}}\text{Var} \left(f^N(X^N(s, c))\right) = N\mathbb{E}\left(  \left|\frac{f^N(X^N(s, c))}{N^{\alpha}} - \mathbb{E}\left(\frac{f^N(X^N(s, c))}{N^{\alpha}}\right)\right|^2 \right).\]
One can estimate the right hand side by using the same argument as is in Theorem \ref{thm:rate-CGT} to obtain an upper bound $8 L_K^2 + 8 C_K^2 \mathbb{E} \left(|V_N(s)|\right)^2$, which is bounded in $N$ uniformly on $[0, t]$ by Lemma \ref{lem:moment-V}.
\end{proof}

\begin{remark}\label{rem-RSD}
Based on Theorem \ref{thm:sens-rate}, Corollary \ref{cor:rate-GT}, Corollary \ref{cor:rate-CGT} and Theorem \ref{thm:rate-FD}, we may expect the RSDs of the GT, CGT and FD methods to scale as $\mathcal{O}(N^{1/2})$, $\mathcal{O}(1)$ and $\mathcal{O}(N^{-1/2})$, respectively. Since in Theorem \ref{thm:sens-rate},
 we do not have an exact limit for the sensitivity itself, this conclusion is not rigorously proven.  
As mentioned in Remark \ref{rem-sens-limit}, under additional assumptions, this conclusion will be true.
Our numerical results in the next section also support this statement. Moreover, we note that the $\mathcal{O}(N^{2 \alpha + 1})$ estimates in 
Theorem \ref{thm:GIR} and Corollary \ref{cor:rate-GT} are sharp. 
\end{remark}
%

\section{Numerical examples}\label{sec:numerical}
We illustrate the dependence of RSD of various sensitivity estimators (with respect to the deterministic parameter) on the system size $N$ via numerical examples.  
When comparing the GT or CGT methods with FD or RPD methods, we must bear in mind that while GT and CGT do not have method parameters, the FD method has a perturbation parameter $h$ and the RPD method has a window size parameter $w$, making the comparison not straightforward. Moreover, the FD and the RPD methods are biased.
A proper practical comparison involves choosing parameters $h$ and $w$ to
obtain an acceptable bias. We do not pursue such a detailed comparison here as
we are focused solely on the dependence on system size $N$. In the case of FD
or RPD methods, we fix $h$ or $w$ respectively, and vary $N$.   
We also use the CRN FD method instead of the IRN FD, as that is the more
commonly used approach. Moreover, since our variance estimates for FD methods 
were derived based on an upper bound which is twice that of the IRN FD method,
it is important to compare the performance of CRN FD to see if the order
estimate $\mathcal{O}(N^{-1/2})$ for the RSD is sharp.  

We note that in the very large system size limit, the stochastic system behaves nearly deterministically and hence none of these stochastic sensitivity methods are
needed; traditional ODE sensitivity methods would do. However, when the system size $N$ is modestly large, say $N = 100$, the system may not be approximated by the ODE and our asymptotic analysis may be relevant in this regime. Our numerical results below show this.  
\subsection{Numerical example 1}  
The reversible isomerization model consists of two species $S_1$ and $S_2$ and involves the following two reactions:

\begin{equation}
S_1 \xrightarrow{c_1} S_2,    \qquad S_2 \xrightarrow{c_2} S_1.
\end{equation}
In the model with system size $N$, the intensity functions for processes $R_1^N$ and $R_2^N$ are 
\[a_1^N(X^N(t), c) =  c_1 X_1^N(t),\]
\[a_2^N(X^N(t), c) =  c_2 X_2^N(t),\]
respectively. The stoichiometric vectors are $\nu_1 = [-1, 1]^T$ and $\nu_2 =[1, -1]^T$.

In this example, the expectation of the population of species at a fixed time $t$ can be computed analytically:
\begin{equation}\label{equ:rever-isom-1}
E[X_1^N(t)]=X_1^N(0) + \frac{1-e^{-(c_1+c_2)t}}{c_1+c_2}(c_2X_2^N(0)-c_1X_1^N(0)),
\end{equation}
\begin{equation}\label{equ:rever-isom-2}
E[X_2^N(t)]=X_2^N(0) - \frac{1-e^{-(c_1+c_2)t}}{c_1+c_2}(c_2X_2^N(0)-c_1X_1^N(0)),
\end{equation}
where $X_1^N(0)$ and $X_2^N(0)$ are assumed to be deterministic. 
One can compute the exact sensitivities by differentiating \eqref{equ:rever-isom-1} and \eqref{equ:rever-isom-2} with respect to parameters.
In the numerical tests considered here, we choose parameters $c_1 = 0.3$ and
$c_2 = 0.2$ and the initial population $X_1^N(0) = N$ and $X_2^N(0) = N$,
where $N$ is the system size parameter. We set the terminal time $T = 10$ and
compute the sensitivity for $N = 1,2,5,10,50, 100,200,300, 400, 500, 600, 700,
800, 900$ and $1000$. We use four different methods here, namely GT, CGT, CRN
FD and RPD. 
We note that by CRN FD, we mean the common random number and one-sided finite
difference method in conjunction with Gillespie's SSA \cite{CRP}. 
The perturbation parameter for the CRN FD method is $h = 0.01$ for parameter $c_1$ and the window size parameter $w = 1.0$ for RPD method for terminal time $T = 10$. The number of trajectories for simulation is $N_s = 10^6$ for each system size $N$. We consider sensitivities with respect to $c_1$ of the expected values of four different output functions.
   
The first output function we consider here is 
$f^N(x) = x_1$ for all $N$, that is, we compute the sensitivity of $\mathbb{E}(X_1^N(T))$ with respect to parameter $c_1$. Obviously, conditions in Assumption \ref{assume5} are satisfied with $\alpha = 1$ and $f(x) = x_1$. We examine the growth of sensitivity of $ \mathbb{E}(X_1^N(T))$ with respect to $c_1$ in terms of $N$ using  $10^6$ independent trajectories. The computed sensitivity and the error in the sensitivity estimate
are shown in Figure \ref{fig:subfig:mean_reverse_isom}, and Figure \ref{fig:subfig:var_reverse_isom} shows the loglog plot of RSD of all four methods.

\begin{figure}[!ht]
\centering
\subfigure[Sensitivity]
{ \label{fig:subfig:mean_reverse_isom}
\includegraphics[width=2.4in]{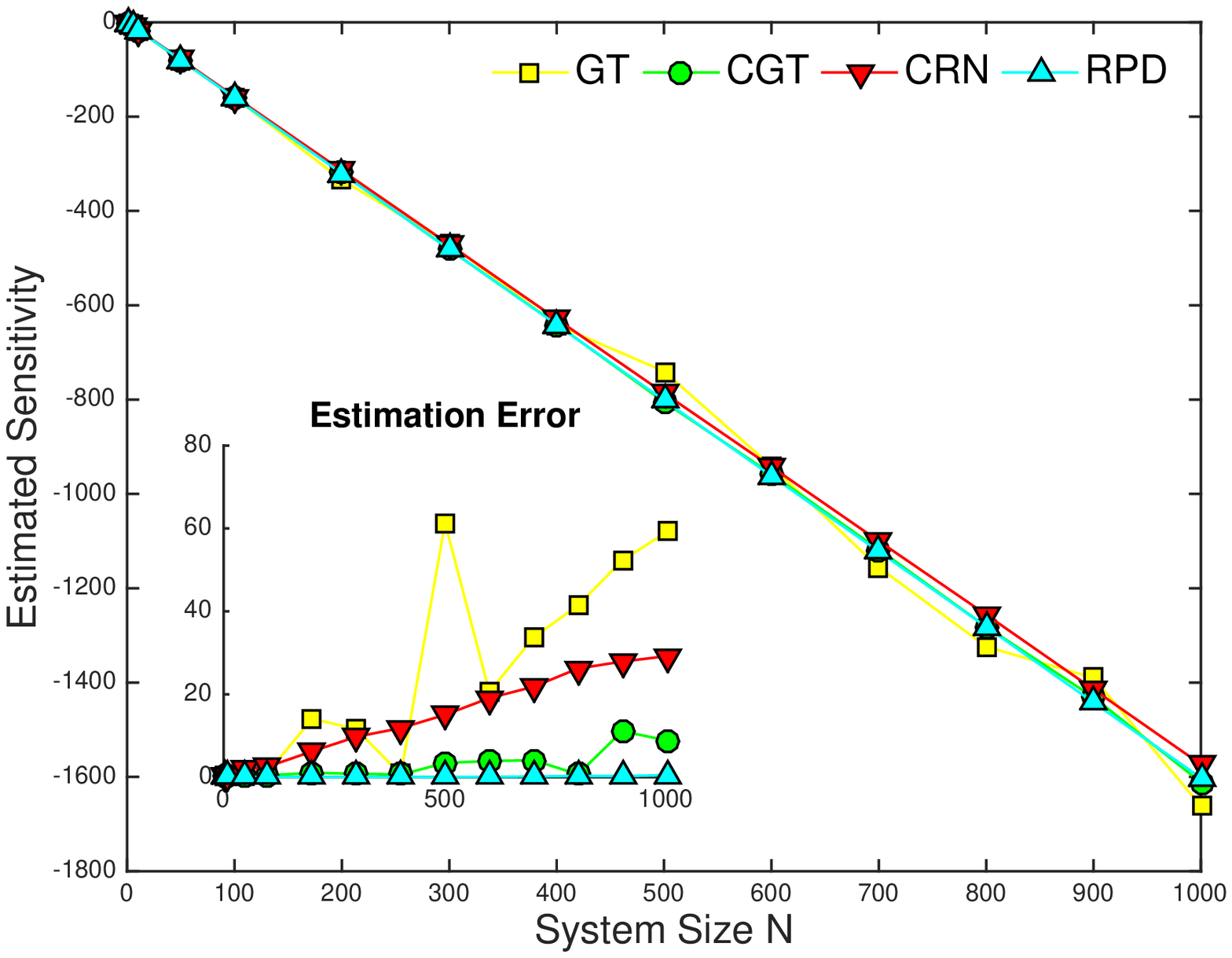}}
\subfigure[RSD]
{ \label{fig:subfig:var_reverse_isom}
\includegraphics[width=2.4in]{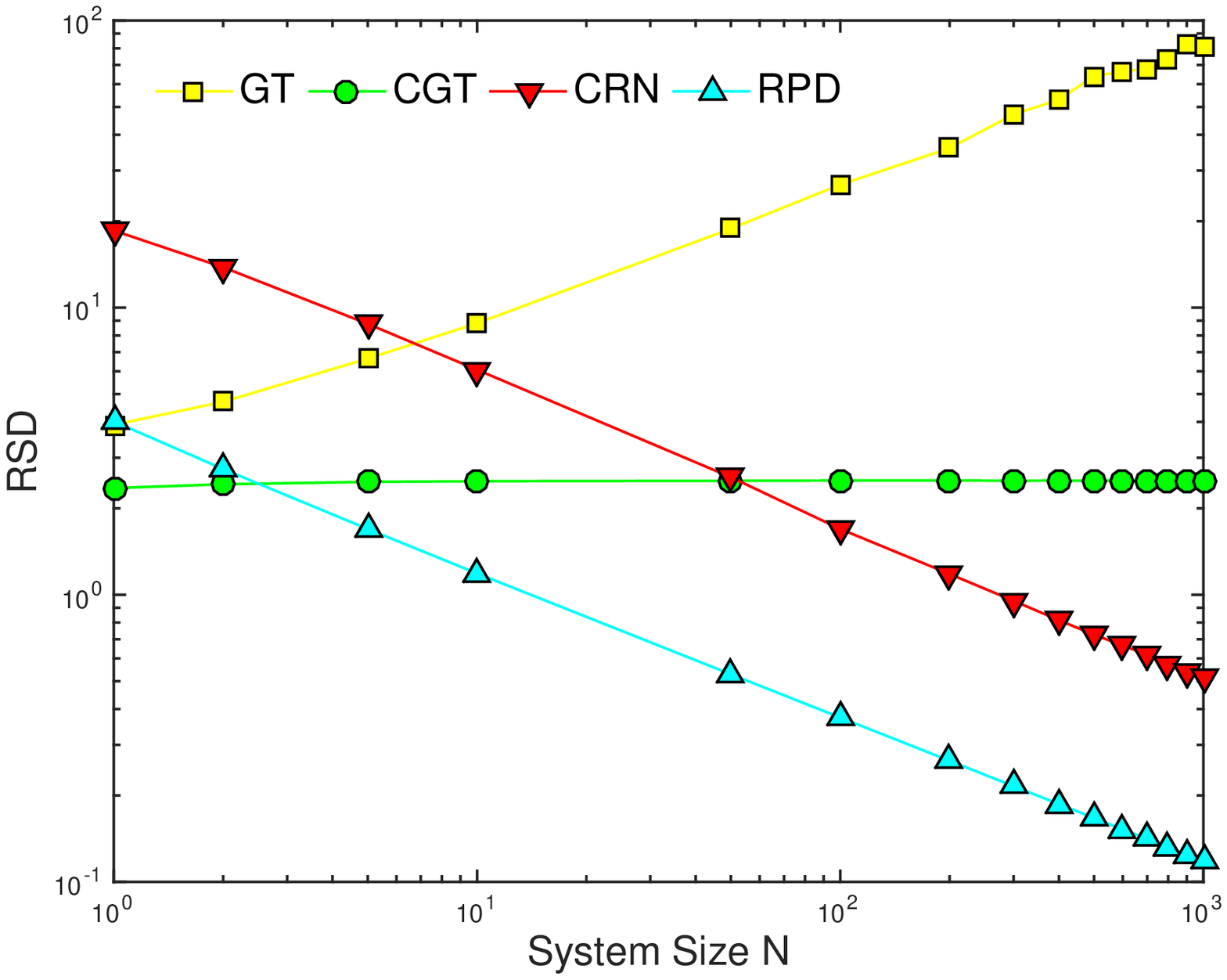}}
\caption{Estimated sensitivity (left) and error in the sensitivity estimate (inset) of $\mathbb{E} (X_1^N(T))$ with respect to $c_1$, and RSD (right) at terminal time $T = 10$ for reversible isomerization model.}
\label{fig:reverse_isom}
\end{figure}


\begin{figure}[!ht]
\centering
\subfigure[Sensitivity]
{ \label{fig:subfig:mean_reverse_isom_poly}
\includegraphics[width=2.4in]{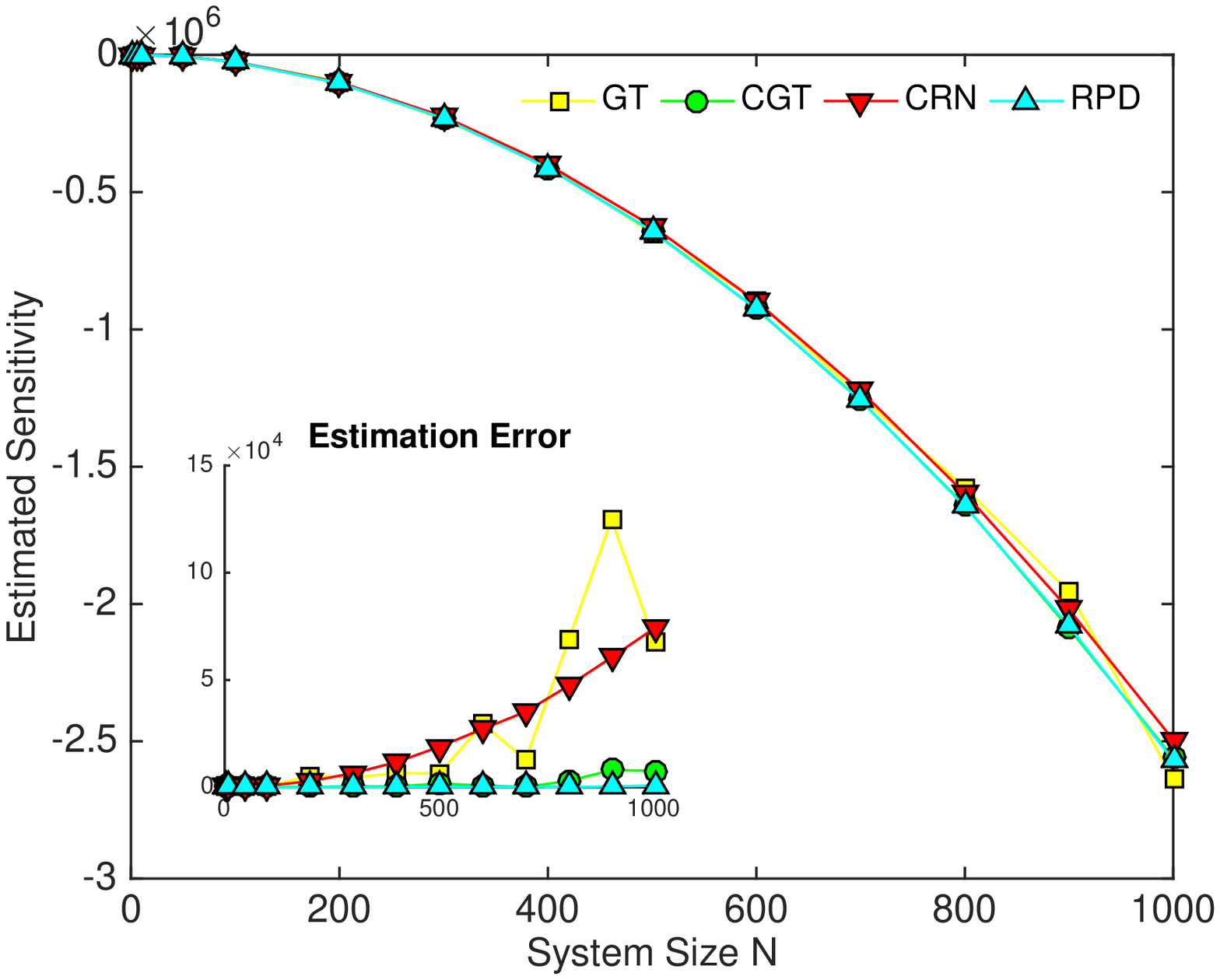}}
\subfigure[RSD]
{ \label{fig:subfig:var_reverse_isom_poly}
\includegraphics[width=2.4in]{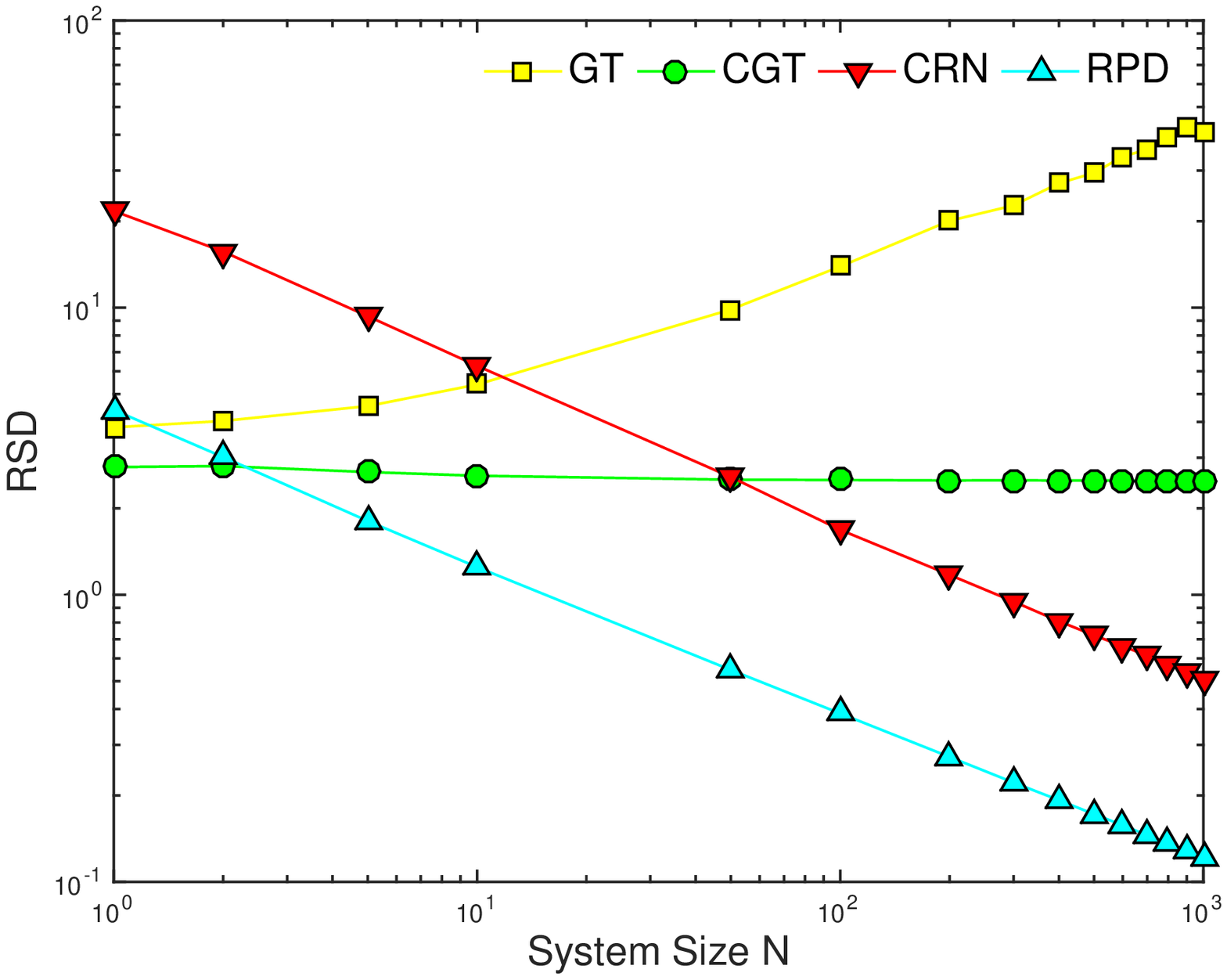}}
\caption{Estimated sensitivity (left) and error in the sensitivity estimate (inset) of $\mathbb{E} (X_1^N(T))^2$ with respect to $c_1$ and RSD (right) at terminal time $T = 10$ for reversible isomerization model.}
\label{fig:reverse_isom_poly}
\end{figure}

\begin{figure}[!ht]
\centering
\subfigure[Sensitivity]
{ \label{fig:subfig:mean_reverse_isom_sin}
\includegraphics[width=2.4in]{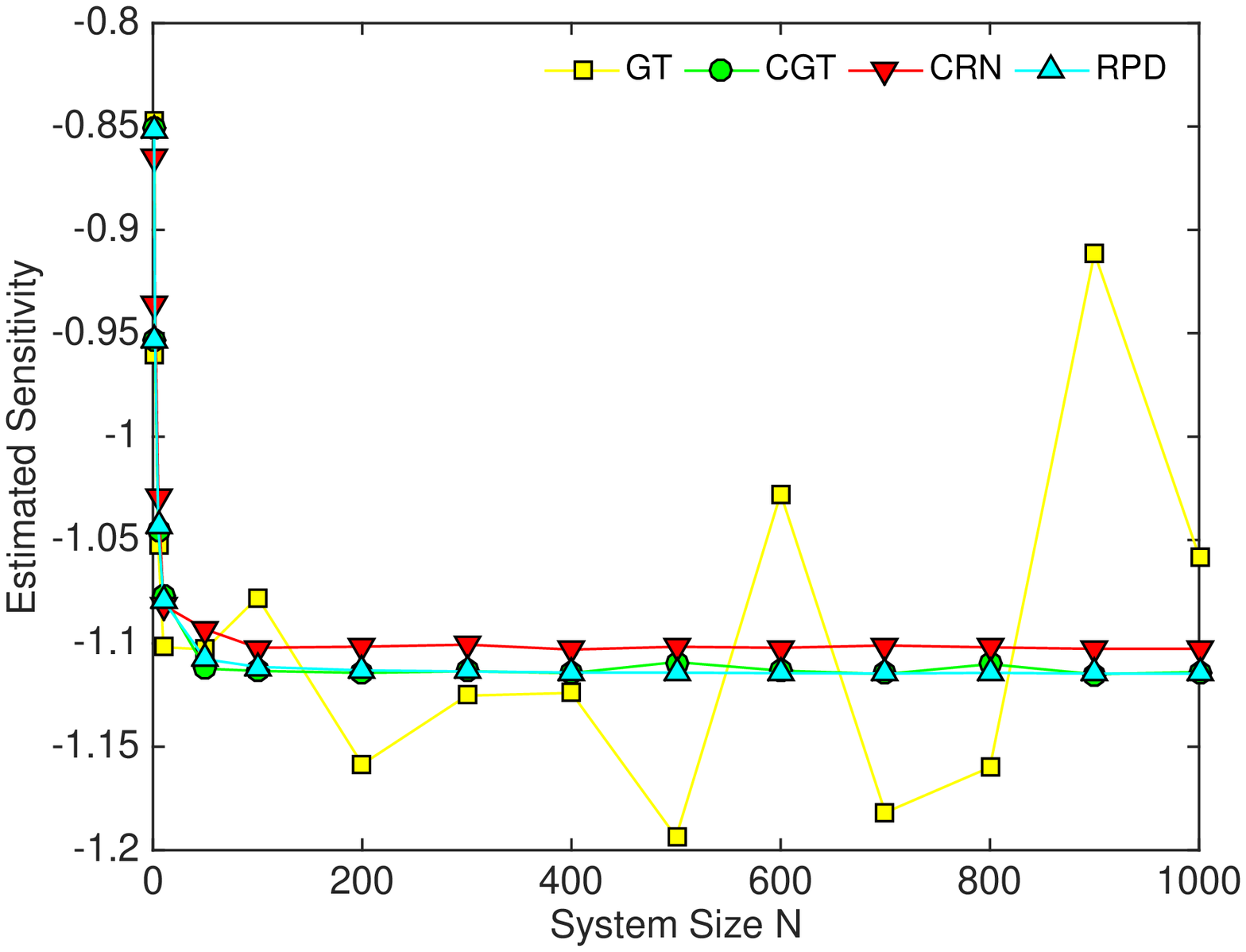}}
\subfigure[RSD]
{ \label{fig:subfig:var_reverse_isom_sin}
\includegraphics[width=2.4in]{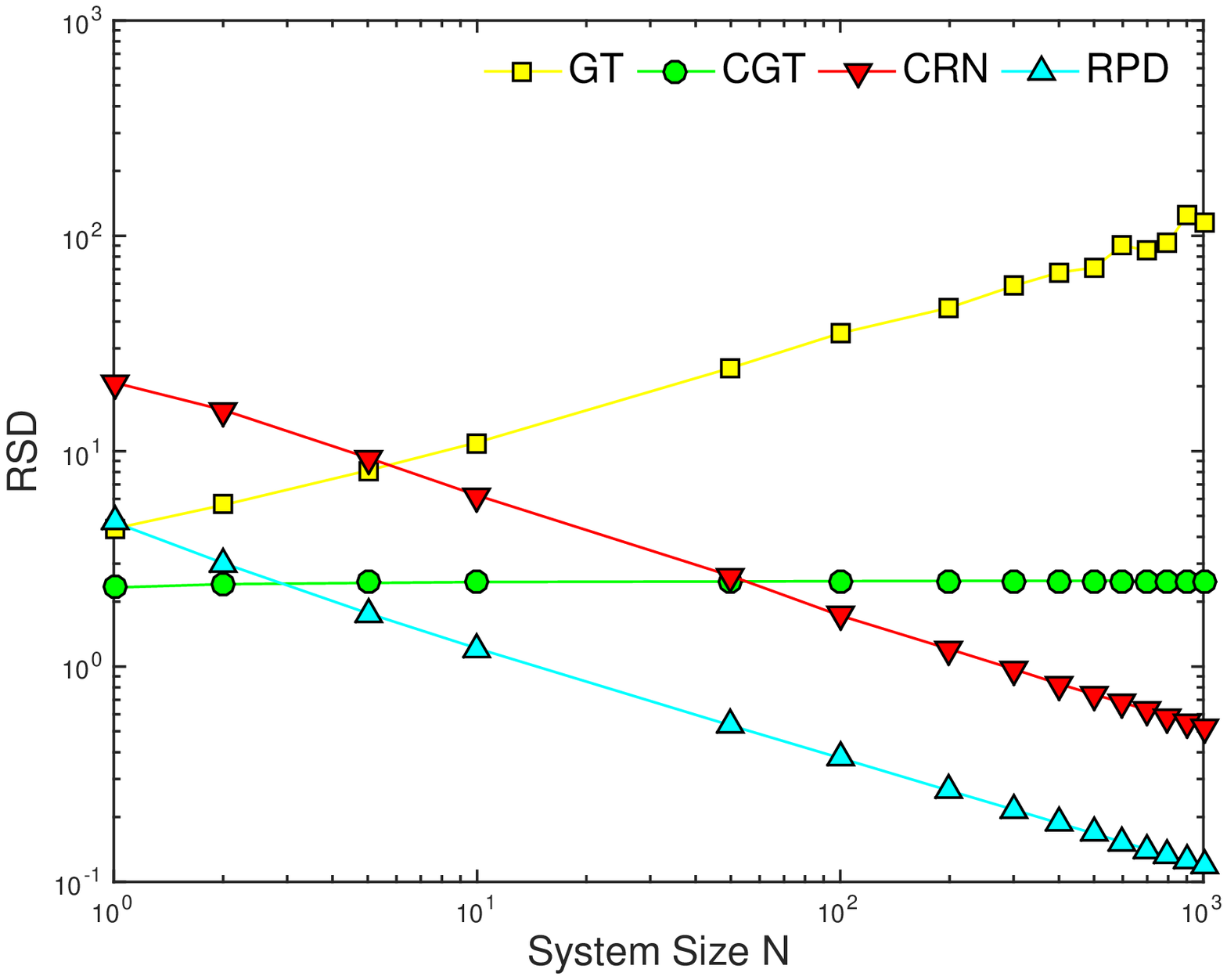}}
\caption{Estimated sensitivity of $\mathbb{E} (\sin({X_1^N(T)}/{N}))$ with respect to $c_1$ (left) and RSD (right) at terminal time $T = 10$ for reversible isomerization model.}
\label{fig:reverse_isom_sin}
\end{figure}

The second output function we use for testing is $f^N(x) = x_1^2$ for all $N$. By \eqref{eq-fN-limit}, $f(x) = x_1^2$ and $\alpha = 2$ in Assumption \ref{assume5}. Similar to the case of output function $f^N(x) = x_1$, the exact sensitivity in this case can be calculated and hence we show the error in the sensitivity estimate as an inset plot. See Figure \ref{fig:reverse_isom_poly} for sensitivity and RSD. 
The third output function we consider is $f^N(x) = \sin(x_1/N)$ and so $f(x) = \sin x_1$. It can be seen that for this case, $\alpha = 0$ in Assumption \ref{assume5}. 
Plot for the numerical result is shown in Figure \ref{fig:reverse_isom_sin}.

The last output function we consider here is the indicator function $f^N(x) = 1_{\{x_1 \leq x_2\}}(x)$, which does not satisfy the conditions in our theorems since $f = 1_{\{x_1 \leq x_2\}}$ is not continuously differentiable. However, numerical tests still show similar behavior as indicated by our theorems. Note that the sensitivity approaches to zero as $N$ increases to $\infty$ and hence RSD is not well defined for large $N$. Instead, we plot the estimator variance against $N$ in Figure \ref{fig:subfig:var_reverse_isom_ind}. 
\begin{figure}[!ht]
\centering
\subfigure[Sensitivity]
{ \label{fig:subfig:mean_reverse_isom_ind}
\includegraphics[width=2.4in]{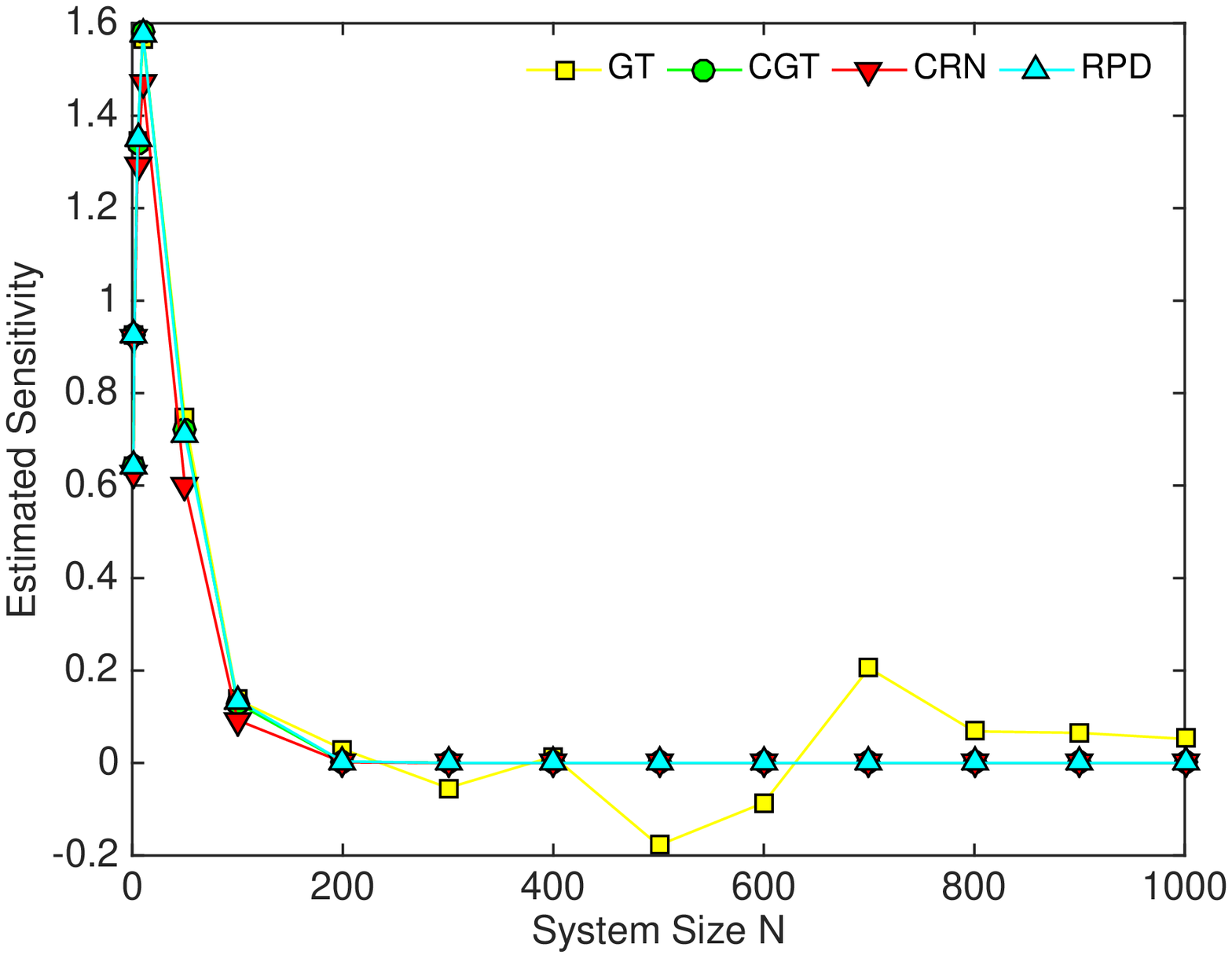}}
\subfigure[Variance]
{ \label{fig:subfig:var_reverse_isom_ind}
\includegraphics[width=2.4in]{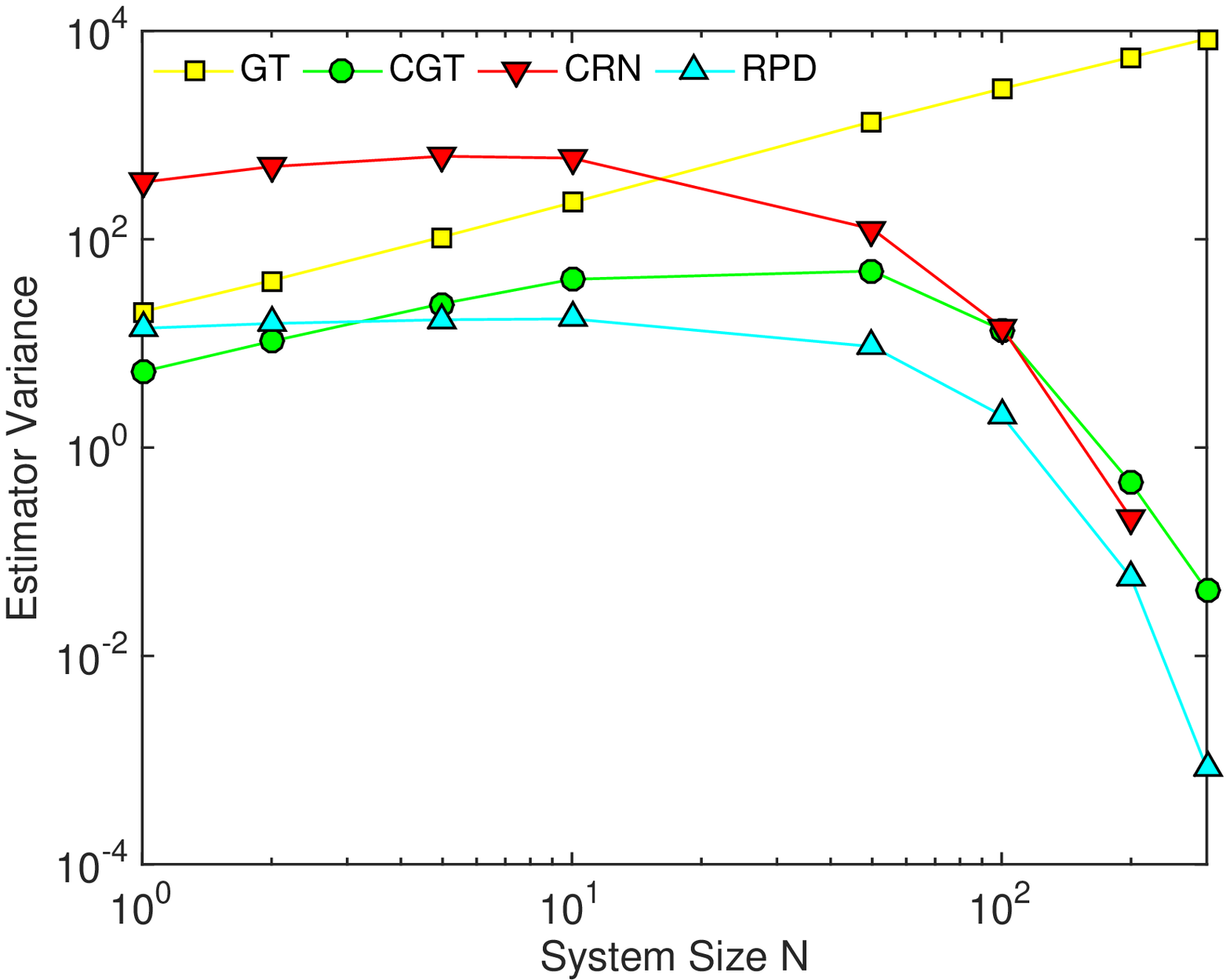}}
\caption{Estimated sensitivity of $\mathbb{P} (X_1^N(T) \leq X_2^N(T))$ with respect to $c_1$ (left) and variance (right) at terminal time $T = 10$ for reversible isomerization model.}
\label{fig:reverse_isom_ind}
\end{figure}

Additionally, Table \ref{tab:reverse-isom-slope} summarizes the rate of growth (as a power of $N$) of the numerically estimated RSD for the different estimators considered above. The results are in agreement with the theory. 




\begin{table}[!ht]
\caption{Observed slopes (via regression for large $N$) for the loglog plots of RSD 
for reversible isomerization model, that is,  $R_1$, $R_2$ and $R_3$ are the observed asymptotic order of the estimator RSD (as a power of $N$) for $\mathbb{E}(X_1^N(T))$, $\mathbb{E}(X_1^N(T))^2$ and $\mathbb{E}(\sin (X_1^N(T)/ N))$, respectively.}
\centering
\begin{tabular}{| c | c | c | c |}
\hline     &  $R_1$  & $R_2$   &  $R_3$ \\
\hline
\hline
GT  &  0.4992                  &  0.4895                      &  0.5724              \\
\hline
CGT &  -0.0004                  &  -0.0008                      &  0.0009             \\   
\hline
CRN FD &  -0.5156                  &  -0.5160                     & -0.5162             \\   
\hline
RPD &  -0.5005                  & -0.5000                      &  -0.5000              \\
\hline
\end{tabular}
\label{tab:reverse-isom-slope}
\end{table}


\subsection{Numerical example 2}
As a second numerical example, let us consider the decaying-dimerizing model \cite{Gillespie-tau-leaping}
\begin{equation}\label{equ: decay-dim}
S_1 \xrightarrow{c_1} \emptyset,  \qquad 2S_1 \xrightarrow{c_2} S_2,
\qquad S_2 \xrightarrow{c_3} 2S_1,
\qquad S_2 \xrightarrow{c_4} S_3.
\end{equation}

The stoichiometric vectors are $\nu_1 = [-1, 0, 0]^T$, $\nu_2 = [-2, 1, 0]^T$, $\nu_3 = [2, -1, 0 ]^T$ and $\nu_4 = [0, -1, 1]^T$.  
We set the initial population to be $X_1^N(0) = 10N, X_2^N(0) = 0, X_3^N(0) = 0$.
Using the stochastic mass action form \eqref{equ:mass action}, the intensity for processes $R_1^N$, $R_2^N$, $R_3^N$ and $R_4^N$ are 
\[a_1^N(X^N(t), c) = c_1X_1^N(t),\] 
\[a_2^N(X^N(t), c) = \frac{c_2}{2 N} X_1^N(t)(X_1^N(t) - 1), \] 
\[a_3^N(X^N(t), c) = c_3 X_2^N(t), \]  
\[a_4^N(X^N(t), c) = c_4 X_2^N(t).\]
We set the parameters as follows, $c_1 = 1.0$, $c_2 =0.002$, $c_3 = 0.5$ and $c_4 = 0.04$. Note that the intensity for the second reaction is not linear, hence an analytical formula for the sensitivity is not attainable.  
We test the sensitivity and RSD for $\mathbb{E}[f^N(X_1^N)]$ with respect to $c_1$.
For the CRN FD method, we use one-sided finite difference scheme and perturb
the parameter $c_1$ by $h = 0.01$. Note that RPD is not applicable for this
example since the firing of the first reaction will prevent the second
reaction to happen when the population of $S_1$ is $1$ (see
\cite{RPD}). Therefore, we only examine the RSDs of GT, CGT and CRN FD here. For each system size $N$, the number of trajectories we use for simulation is $\text{N}_s = 10^6$. Plots of the sensitivity and RSD are shown in Figure \ref{fig:decay_dim}, \ref{fig:decay_dim_poly} and \ref{fig:decay_dim_sin} for $\mathbb{E} (X_1^N(T))$, $\mathbb{E} (X_1^N(T))^2$ and $\mathbb{E} (\sin (X_1^N(T)/N))$, respectively. The rate of growth (as a power of $N$) of the numerically estimated RSD are summarized in Table \ref{tab:decay-dim-slope}.
\begin{figure}[!ht]
\centering
\subfigure[Sensitivity]
{ \label{fig:subfig:mean_decay_dim}
\includegraphics[width=2.4in]{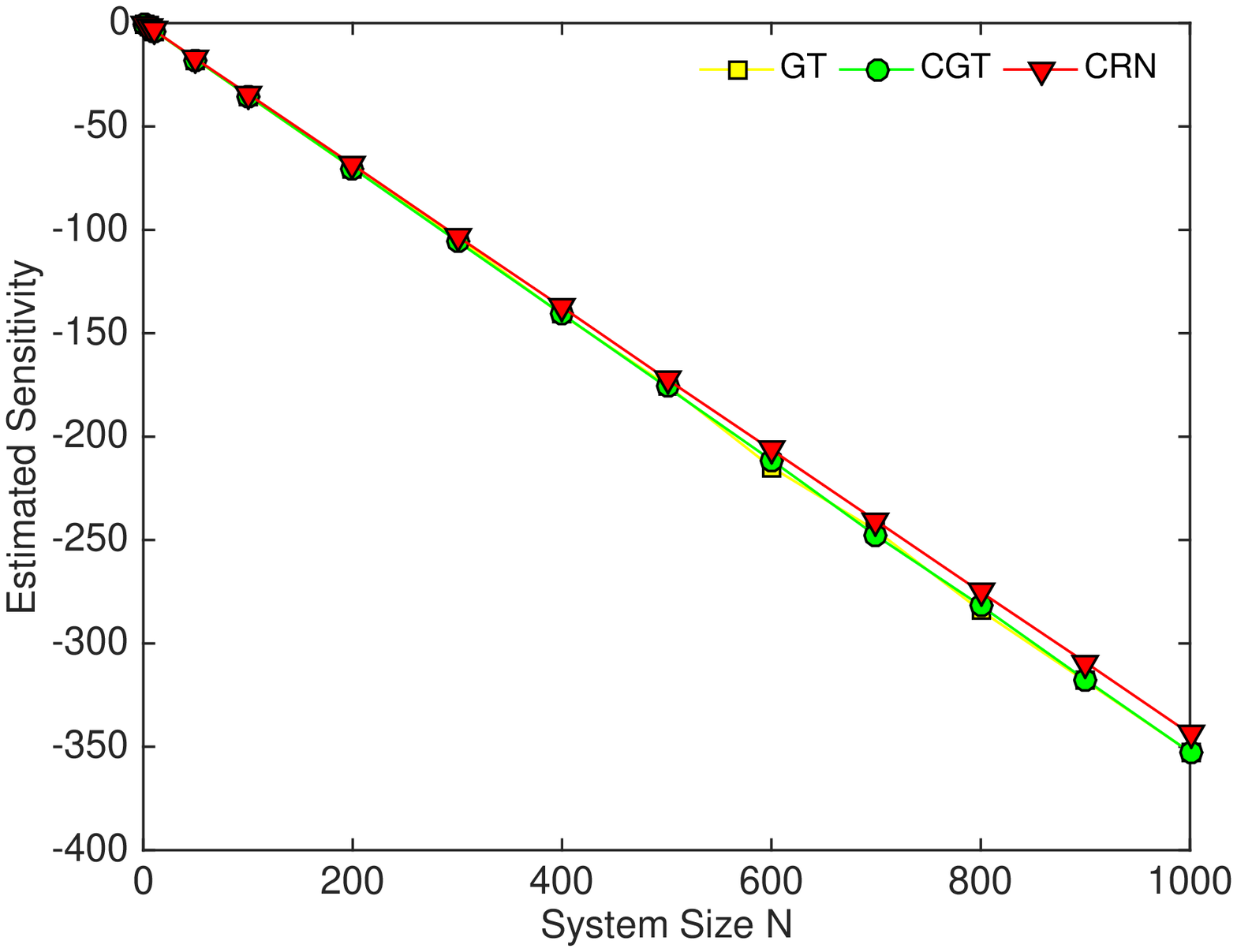}}
\subfigure[RSD]
{ \label{fig:subfig:var_decay_dim}
\includegraphics[width=2.4in]{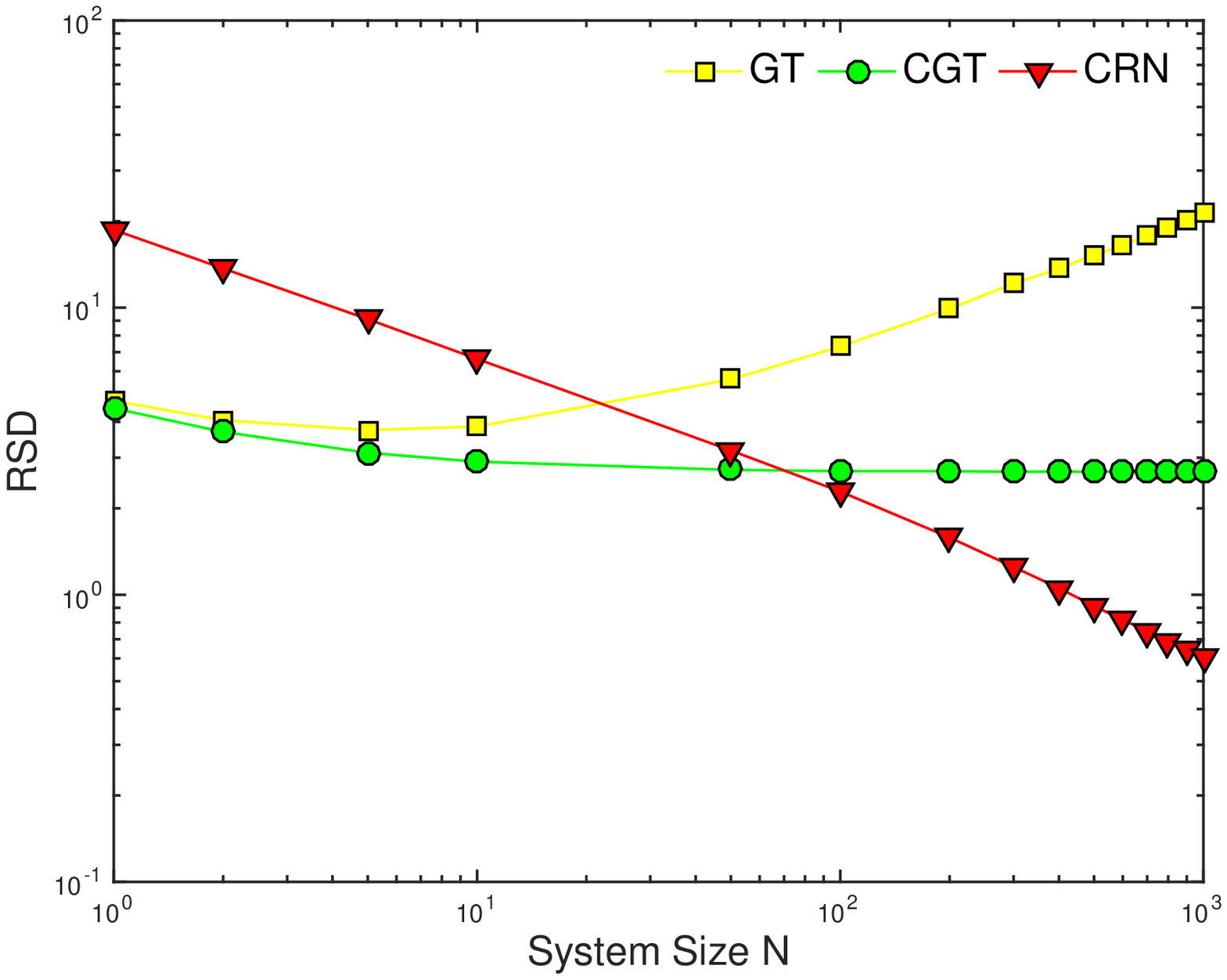}}
\caption{Estimated sensitivity of $\mathbb{E} [X_1^N(T)]$ with respect to $c_1$ and RSD at terminal time $T = 5$ for decaying-dimerizing model.}
\label{fig:decay_dim}
\end{figure}

\begin{figure}[!ht]
\centering
\subfigure[Sensitivity]
{ \label{fig:subfig:mean_decay_dim_poly}
\includegraphics[width=2.4in]{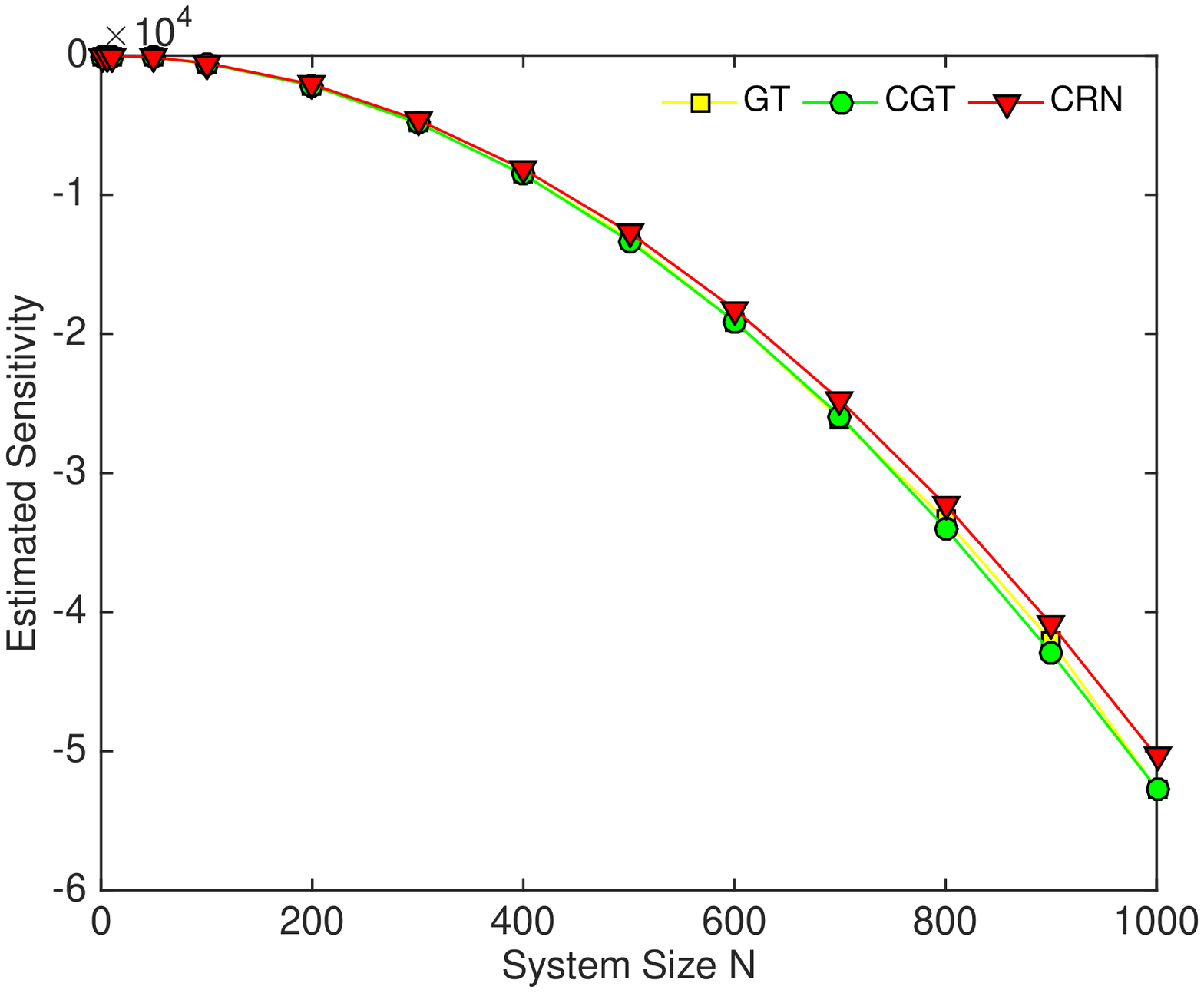}}
\subfigure[RSD]
{ \label{fig:subfig:var_decay_dim_poly}
\includegraphics[width=2.4in]{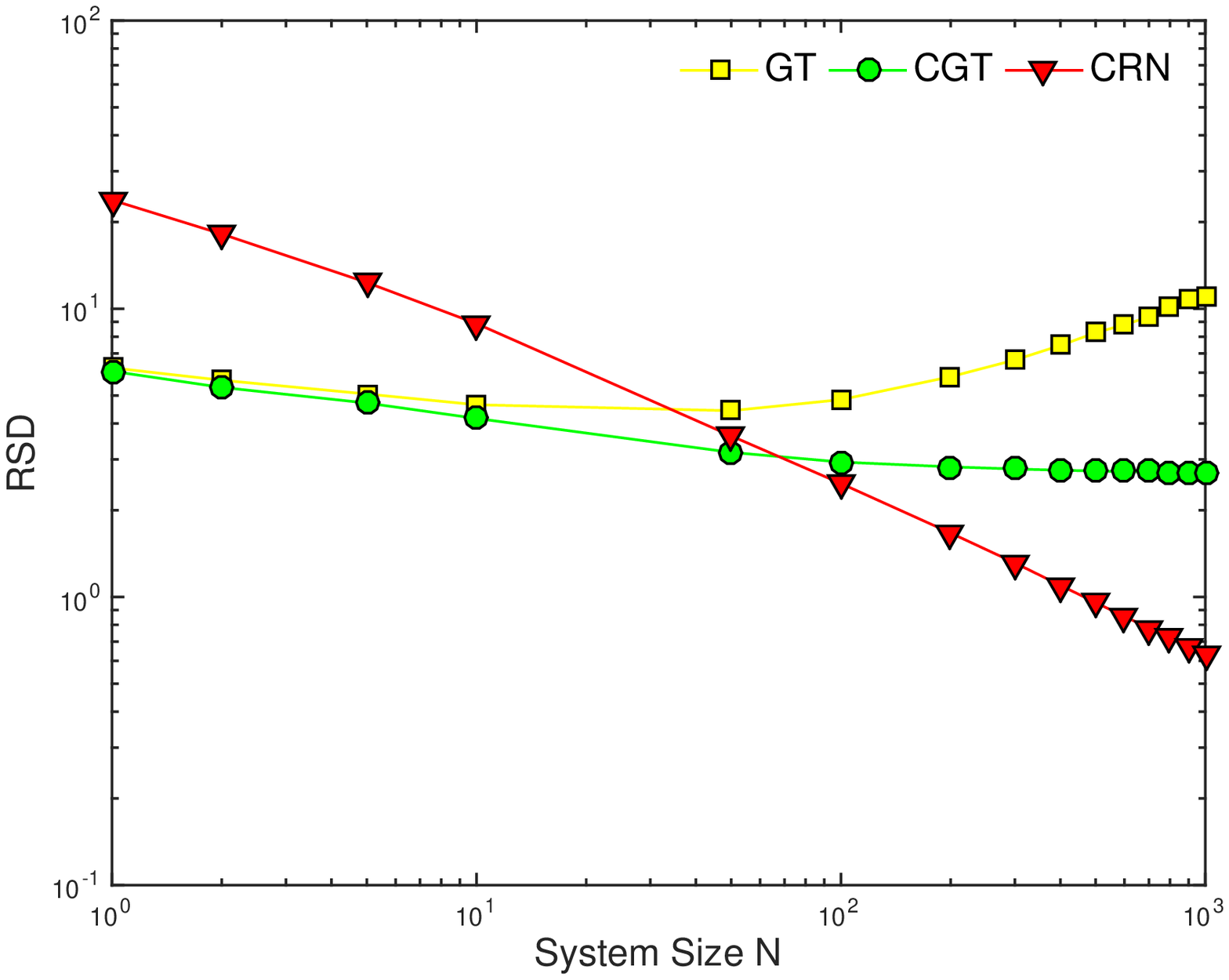}}
\caption{Estimated sensitivity of  $\mathbb{E} (X_1^N(T))^2$ with respect to $c_1$ and RSD at terminal time $T = 5$ for decaying-dimerizing model.}
\label{fig:decay_dim_poly}
\end{figure}

\begin{figure}[!ht]
\centering
\subfigure[Sensitivity]
{ \label{fig:subfig:mean_decay_dim_sin}
\includegraphics[width=2.4in]{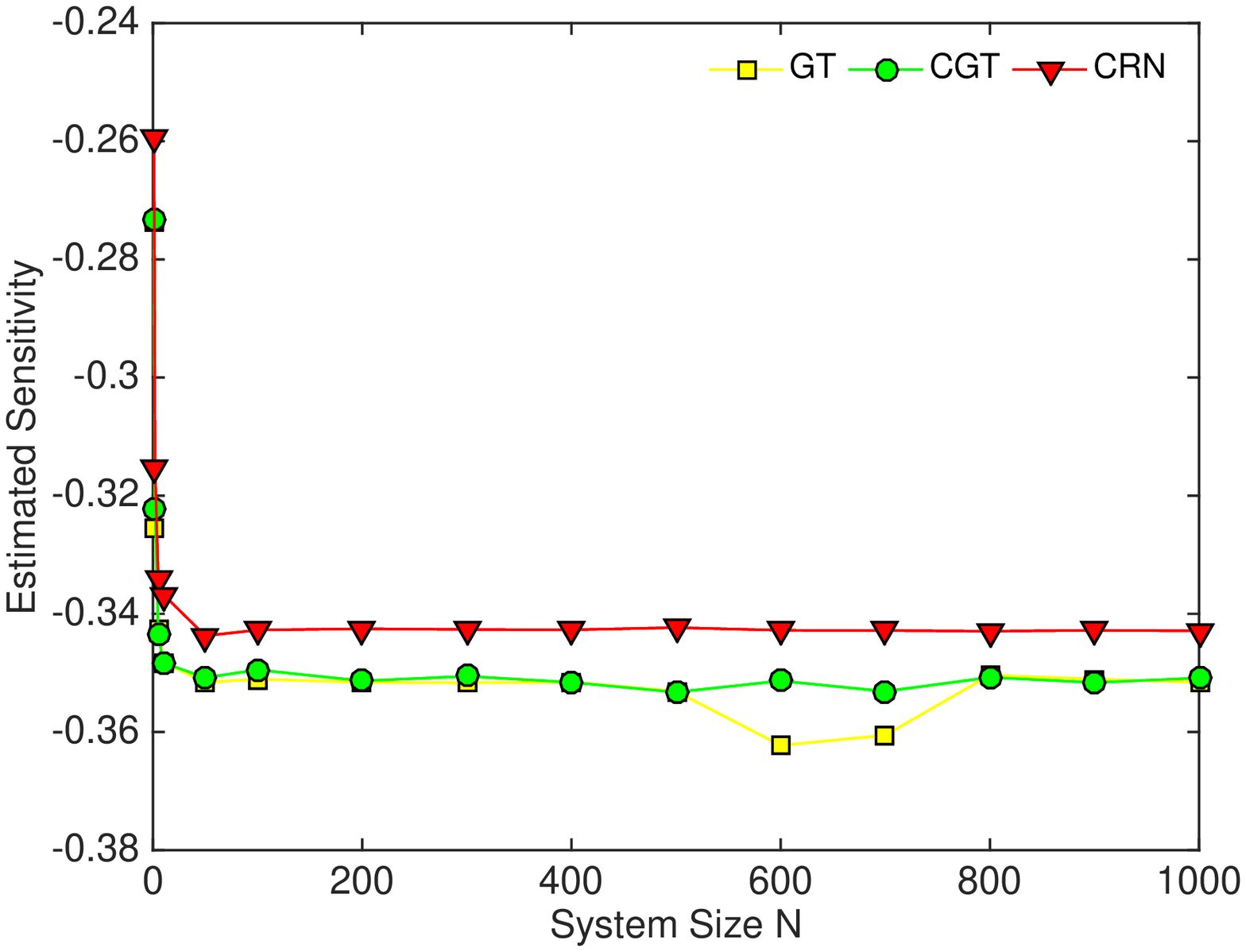}}
\subfigure[RSD]
{ \label{fig:subfig:var_decay_dim_sin}
\includegraphics[width=2.4in]{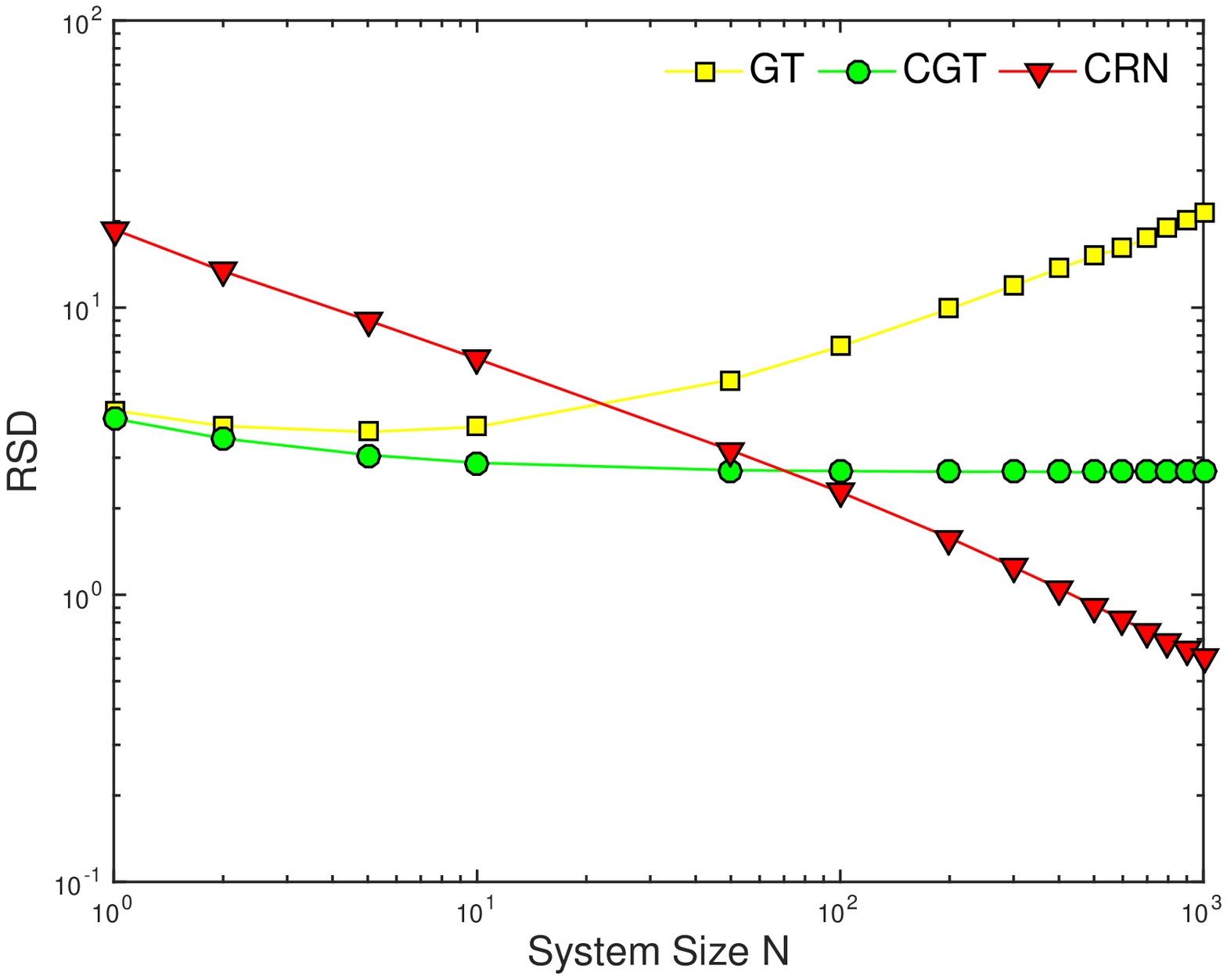}}
\caption{Estimated sensitivity of $\mathbb{E} (\sin (X_1^N(T)/N))$ with respect to $c_1$ and RSD at terminal time $T = 5$ for decaying-dimerizing model.}
\label{fig:decay_dim_sin}
\end{figure}

\begin{table}[!ht]
\caption{Observed slopes (via regression) for the loglog plots for RSD 
for decaying-dimerizing model, that is,  $R_1$, $R_2$ and $R_3$ are the observed asymptotic order of the estimator RSD (as a power of $N$) for $\mathbb{E}(X_1^N(T))$, $\mathbb{E}(X_1^N(T))^2$ and $\mathbb{E}(\sin (X_1^N(T)/ N))$, respectively.}
\centering
\begin{tabular}{| c | c | c | c |}
\hline
       &  $R_1$  &  $R_2$  &  $R_2$  \\
\hline
\hline
GT   &  0.4689                 &  0.4100                     &  0.4737      \\
\hline
CGT &  -0.0040                  &  -0.0257                     &  -0.0008        \\   
\hline
CRN FD &  -0.6022                  &  -0.6068                    & -0.6009       \\
\hline
\end{tabular}
\label{tab:decay-dim-slope}
\end{table}

\subsection{Numerical example 3}
In this numerical example, we revisit the reversible isomerization network to illustrate the asymptotic behavior of various estimators in terms of the terminal time $T$. Note that in this example, the deterministic parameters $c_j$ and the stochastic parameters $c_j'$ are the same.
For ease of notation, we suppress $N$ because we fix $N=10$ and only let $T$ change in this simulation.
The initial population is $X_1(0) = 10$ and $X_2(0)=10$. 
Parameters are taken to be $c_1 = 0.3$ and $c_2 = 0.2$ as before. 
In this case, the exact sensitivity can be obtained by taking derivative with respect to $c_1$ for \eqref{equ:rever-isom-1}. 
Figure \ref{fig:subfig:Sens_T_limit} shows the sensitivities estimated by GT,
CGT and CRN FD against the true sensitivity as a function of $T$.
The Figure \ref{fig:subfig:Var_T_limit} shows the estimator variances as a function of $T$. It can be seen that all three estimators show a variance that grows linearly in $T$ for the range of values of $T$ considered here.

In fact, this observation can be justified for the GT and CGT methods as follows.
Recall the definition of the centered processes $M_j(t) = R_j(t) - \int_0^t a_j(X(s))ds, j = 1, \cdots, m$.
Since $X_j(t)$ are bounded in this network, one can show that
\[\mathbb{E}M_j^2(t) = \mathbb{E}([M_j, M_j](t)) = \mathbb{E}R_j(t) = c_j\int_0^t \mathbb{E}X_j(s)ds = \mathcal{O}(t),\]
where the first equality holds since $M_j(t), j = 1, 2$ is a $L^2$-bounded martingale (see \cite{Protter}).
Therefore, we conclude that $\mathbb{E}Z^2(t) = \mathcal{O}(t)$ because in this case $Z(t) = c_1^{-1}M_1(t)$ and hence the variances of both GT and CGT are of $\mathcal{O}(t)$.

As for the variance of the FD estimator, the observed growth is approximately
linear in $t$ in the range of $10$ to $20$. However, from the upper bound 
used in the proof of Theorem \ref{thm:rate-FD}, it is easy to see that 
the estimator variance remains bounded as $t \to \infty$.

\begin{figure}[!ht]
\centering
\subfigure[Sensitivity]
{ \label{fig:subfig:Sens_T_limit}
\includegraphics[width=2.4in]{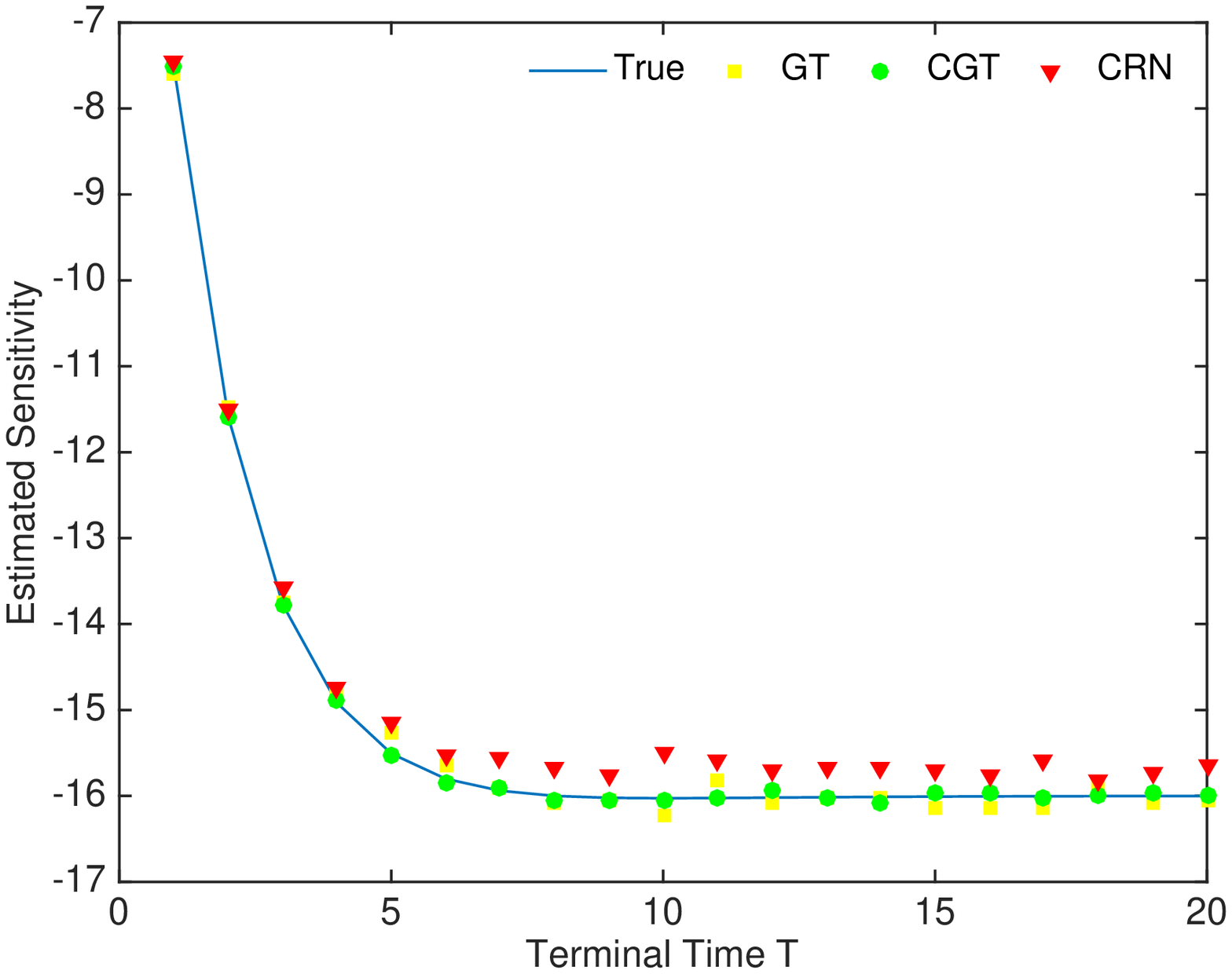}}
\subfigure[Variance]
{ \label{fig:subfig:Var_T_limit}
\includegraphics[width=2.4in]{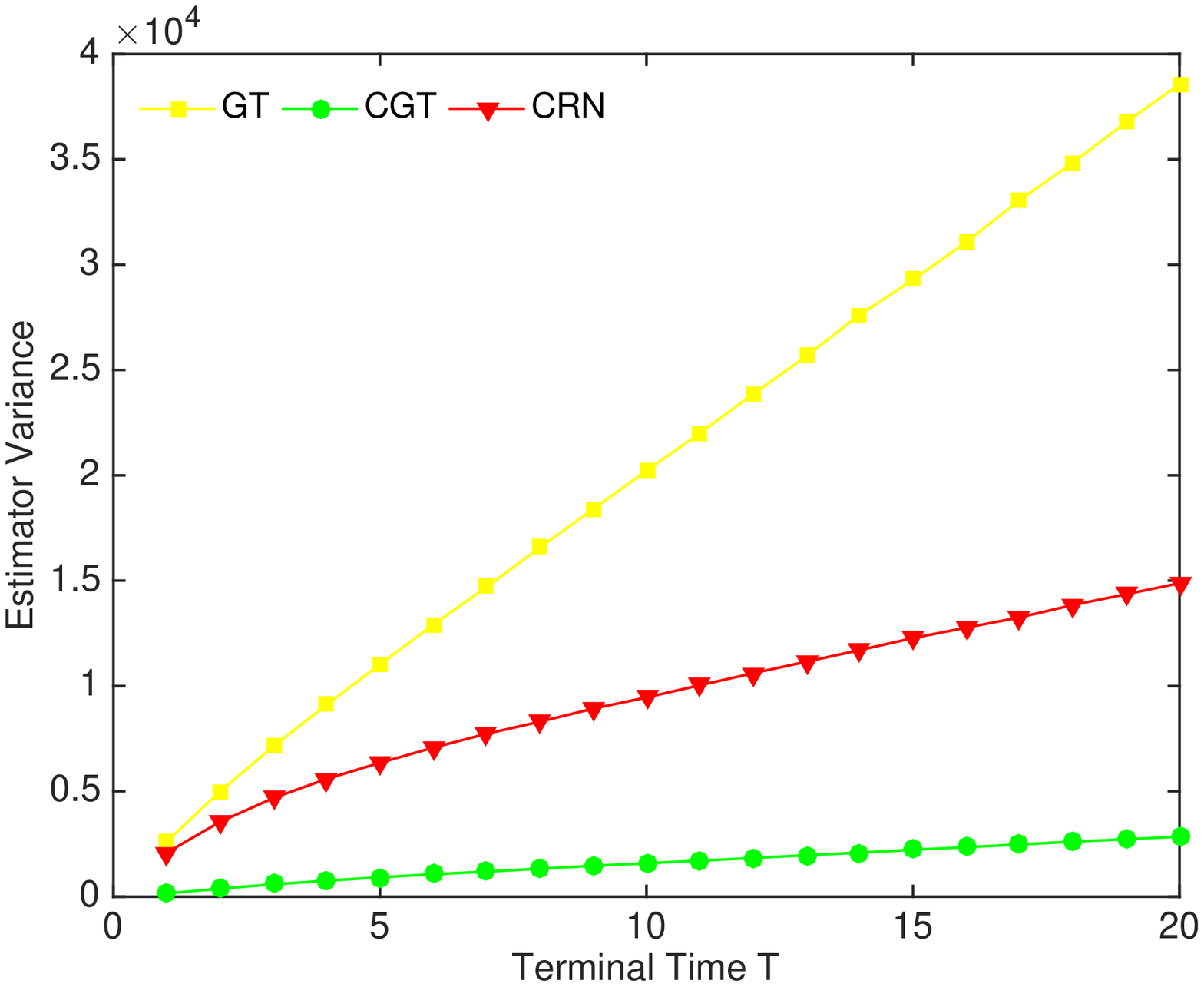}}
\caption{Estimated and true sensitivities (left) of $\mathbb{E}X_1(t)$ with respect to $c_1$ and the estimator variances (right) for the reversible isomerization model. The terminal time $T$ ($x$-axis) ranges from $1$ to $20$.}
\label{fig:T_limit}
\end{figure}

%
\section{Discussion and concluding remarks}\label{sec-discuss}
Our primary goal in this paper was to provide an analytical explanation of the phenomenon of
larger estimator variance of the GT method compared to the FD (as well as RPD
in the context of chemical kinetics) methods reported
frequently in the literature \cite{Glynn-book, Gir, CRP, RPD}. This was 
accomplished by our analysis in terms of system size $N$. 
The system size $N$ was taken to be proportional to system volume in the context
of stochastic chemical kinetics. 
Our analysis showed that the relative standard deviation (RSD) (see
\eqref{eq-RSD-def} for definition) of the GT, CGT and FD sensitivity
estimators are 
$\mathcal{O}(N^{1/2})$, $\mathcal{O}(1)$ and $\mathcal{O}(N^{-1/2})$,
respectively, as $N \to \infty$.  
The numerical examples provided also illustrate this point. Additionally, our
numerical examples also suggested that the RSD of the RPD method also scales
as $\mathcal{O}(N^{-1/2})$.  
We also showed that the relative bias (RB) (see \eqref{eq-RB-def} for definition) of any FD method 
was asymptotically $\mathcal{O}(1)$ as $N \to \infty$. 
We note that, in our analysis of the FD methods, we kept $h$ fixed and considered $N \to
\infty$ limit. Now we discuss, at least in theory, how $h$ may be chosen in terms of system size
$N$ to obtain the best performance for the FD methods.

{\bf Number of simulations required to achieve a given relative error (RE):}
Since the FD methods are biased while the GT and CGT methods are not, we shall use 
the relative error (RE) to compare the efficiencies of the GT, CGT and FD
estimators. More precisely, we shall estimate the number of trajectory 
simulations $N_s$, required to achieve a given 
tolerance $\delta$ for the relative error (RE) in the mean square sense 
which includes RB and RSD (see \eqref{eq-RE} for the exact definition).  

Our analysis for the FD methods was carried out so that large $N$ 
behavior for fixed $h$ was obtained.  We may combine our large $N$ analysis 
with small $h$ behavior of the FD methods already studied in the literature
\cite{Glynn-book}. In general, the bias of the one-sided FD estimator 
is $\mathcal{O}(h)$ as $h \to 0$, so we may expect the relative bias of an FD 
estimator to be given by $\text{RB} \approx C_2 h$ for small $h$ and large $N$,
where $C_2$ does not depend on $N$ or $h$. If higher order FD is used, then one 
expects $\text{RB} \approx C_2 h^{\gamma_1}$, where $\gamma_1 \geq 1$ in general. 
For instance, for the two-sided FD estimator we have that $\gamma_1=2$. 

Moreover, when using the independent random
number (IRN) FD method, the variance is $\mathcal{O}(1/h^2)$ as $h \to 0$,
which is similar to behavior of the upper bound used in our proof of Theorem 
\ref{thm:rate-FD}. However, when using common random number (CRN) FD methods,
one may typically expect $\mathcal{O}(1/h)$ dependence \cite{Glynn-book,CRP}.  
This is because, $\text{Cov}(f(X(t,c+h)),f(X(t,c)))$ is typically  
$\mathcal{O}(h)$ as $h \to 0$. 
Hence we may write $\text{RSD}^2 \approx C_1/(N h^{\gamma_2})$ for small $h$ 
and large $N$, where typically $\gamma_2 = 1$ or $2$
depending on whether CRN or IRN is used, and $C_1$ is independent of $h$ and
$N$. Combining the bias and the variance, and using \eqref{eq-RE-RSD-RB}, we
expect that, for an FD method
\begin{equation}
\text{RE}^2 = \frac{\text{RSD}^2}{N_s} + \text{RB}^2 \approx \frac{C_1}{N_s N
  h^{\gamma_2}} + C_2^2 h^{2 \gamma_1}.
\end{equation}
At this point, we must remark that in order for the above approximation to hold rigorously, one must establish the joint limit as $(N, h)\to (\infty, 0)$.
We believe that this could be done under additional regularity assumptions, but we shall not pursue this in this paper.

Extending the idea in \cite{Glynn-book} to include system size $N$, we look 
for the optimal choice of $h$ (the one that minimizes RE), for a given system size $N$ and number of
simulations $N_s$.  With some effort, one can see that the optimal $h$ is
given by $$h \propto N^{\frac{-1}{2 \gamma_1 + \gamma_2}} N_s^{\frac{-1}{2
    \gamma_1 + \gamma_2}},$$
and hence the minimal square RE for an FD method has the proportionality
\begin{equation}
\text{RE}^2 \propto N^{\frac{-2 \gamma_1}{2 \gamma_1 + \gamma_2}}
N_s^{\frac{-2 \gamma_1}{2 \gamma_1 + \gamma_2}}.
\end{equation}

On the other hand, for the CGT method, $\text{RE}^2 = \text{RSD}^2/N_s =
C_3/N_s$ for large $N$ where $C_3$ is independent of $N$ and $N_s$. 
Likewise, for the GT method, $\text{RE}^2 = \text{RSD}^2/N_s =
C_4 N/N_s$, for large $N$, where $C_4$ is independent of $N$ and $N_s$. 
Hence, for a specified value of $\delta$ for RE and a given system size $N$, the 
number of simulations required for the different methods are given by  
\begin{equation}
N_s^{\text{FD}} \propto \delta^{-2 - \frac{\gamma_2}{\gamma_1}} N^{-1},\;\;\;\;
N_s^{\text{CGT}}\propto \delta^{-2}, \;\;\;\;
N_s^{\text{GT}} \propto N \delta^{-2}. 
\end{equation}
We note that, as observed in \cite{Glynn-book}, the optimal dependence of
$N_s$ on $\delta$, 
is $\delta^{-2}$, which is achieved for an unbiased method. The biased FD
methods have suboptimal dependence on $\delta$, unless $\gamma_2=0$, which
is typically not the case in the context of discrete state systems, as $\gamma_2=0$ 
implies the validity of the (unregularized) pathwise derivative method \cite{Glynn-book}. However, when $N$ is much larger than $\delta^{-\gamma_2/\gamma_1}$, we expect the FD method to be more efficient than the CGT or GT. For instance, for $\delta
= 0.01$, if $N \gg 10$, say $N=50$ for instance,
we may expect the two-sided CRN FD ($\gamma_1=2,\gamma_2=1$) to be more efficient
than CGT which will be more efficient than GT. If one-sided CRN FD is used
($\gamma_1=\gamma_2=1$) or two-sided IRN FD is used ($\gamma_1=\gamma_2=2$), we expect FD to be more efficient when $N \gg 100$, 
say $N = 500$. If one-sided IRN FD is used ($\gamma_1=1,\gamma_2=2$) we expect
FD to be more efficient than CGT only for $N \gg 10^4$.   

Since the constants of proportionality that appear in the above discussion 
are not known in practice and typically harder to estimate than the
sensitivity itself, one may not expect to choose $h$ in a straightforward
manner based on the above discussion. Nevertheless, the above discussion
provides some idea of the optimal efficiency that could be expected.  

We also note that the comparison of an unbiased estimator with a biased one 
is more nuanced and qualitative. This is because, while one can estimate the variance of an estimator 
from the simulation, its bias cannot be estimated reliably unless one knows the exact 
quantity to be estimated! As a consequence, an unbiased
estimator is preferable to a biased one, unless the unbiased estimator has
exceedingly larger variance compared to the biased one. 
In this context, we 
like to mention that Multilevel Monte Carlo approaches (see
\cite{anderson2014complexity} for instance) may be used to combine a
biased low variance estimator with an unbiased high variance estimator to
obtain an efficient and unbiased estimator.

{\bf Factors other than system size that affect the RSD:} We note that factors other than system size also affect the RSD of an
estimator. One factor to study will be the dependence on $t$ as $t \to
\infty$. Our numerical simulations showed linear growth in $t$ behavior for GT,
CGT and even for FD methods for a practical range of $t$ values (up to a few
multiples of the time to stationarity). However, from a simple upper bound 
for the variance of the FD methods, we expect this growth to reach a finite maximum, 
for systems that are ergodic. The $\mathcal{O}(t)$ behavior (as $t
\to \infty$) for the variance of the GT and CGT methods 
can be justified theoretically, as explained in Section 5.3. 
Thus, dependence on time does not explain the greater variance of GT 
compared to CGT.

{\bf Extension of the variance analysis:}  
Our analysis made special use of the deterministic limit in the large system
size under what is known as the classical scaling which was used by 
Kurtz \cite{Ethier-Kurtz}. In other words,
 after suitable scaling, $f^N(X^N(t))$ converges to the deterministic limit
$f(X(t))$ almost surely. However, the scaled weight processes
$Z^N(t)/\sqrt{N}$ converge weakly to a Gaussian process $U(t)$. Our analysis
combined the two limits to obtain the desired results. 
Our results were proven under Assumptions 1-5 stated in Section 2. 
The first assumption assumes that the parameters enter multiplicatively 
: $a_j(x,c) = c_j b_j(x)$. This is satisfied by the stochastic mass action
form of intensities. In some literature on chemical kinetics, there
are some other forms of intensity functions that are used. 
Relaxing Assumption 1 to a general form will make the weight process $Z^N$ 
more complicated, and it will be given by a stochastic integral where both 
the integrand and the integrator are stochastic processes indexed by $N$. 
To obtain convergence of $N^{-1-2\alpha}
\bfE[(f^N(X^N(t)))^2 (Z^N(t))^2]$ one may need the result from
\cite{kurtz1991weak} which analyzes the limit of a sequence of stochastic integrals. We speculate that Assumption 4 may be relaxed using
stopping time arguments and sufficient integrability assumptions on the
process.   

In many practical systems some species are present in small numbers while
others are present in large numbers, and some reaction parameters are much 
larger than the others making the system ``stiff''. The classical scaling
studied here does not capture this. The more general scaling proposed in
\cite{Multiscale, sep-timescale} (again by Kurtz and collaborators) 
involve introducing a parameter $N$ which appears with different exponents 
both in the stochastic parameters $c'_j$ as well as the scaling of species 
and time itself. These analyses often provide stochastic limits to the 
scaled processes $X^N$. One could extend our current analysis along these lines 
to explore more subtle dependencies of the estimator variances. 
A related earlier work which scales all ``species'' by
the same factor $\epsilon$, and scales time differently $\epsilon^{-\alpha}$, 
in the context of processes driven by Levy measure can be found in
\cite{tomisaki1992homogenization}.

\section*{Acknowledgement}
We would like to thank the anonymous referees for the comments that helped improve the manuscript.


\bibliographystyle{siam}
\bibliography{mybib}
\end{document}